\documentclass[12pt]{article} 
\usepackage{amssymb}
\usepackage{amsmath,bm}
\usepackage{graphics}
\usepackage{color}
\usepackage[color,matrix,arrow]{xy}
\usepackage{xcolor}
\usepackage{xypic}
\usepackage{tikz}
\usepackage[utf8]{inputenc}                    
\usepackage{microtype}                         
\usepackage{mathtools, amsthm, amssymb, eucal} 
\usepackage[normalem]{ulem}                    
\usepackage{mdframed}
\usepackage{verbatim}
\usepackage{booktabs} 
\usepackage{mathrsfs}
\usepackage{titling}
\usepackage[paper=letterpaper, margin=1in, headsep=20pt]{geometry}
\usepackage{hyperref}
\usepackage{enumerate}
\usepackage{multicol}
\setlength{\columnsep}{1cm}

    \theoremstyle{plain}
    \newtheorem{thm}{Theorem}[section]
    \newtheorem{lem}[thm]{Lemma}
    
    \newtheorem{cor}[thm]{Corollary}

    \theoremstyle{definition}
    \newtheorem{defn}{Definition}[section]
    \newtheorem{conj}{Conjecture}[section]
    \newtheorem{exmp}{Example}[section]

    \theoremstyle{remark}
    \newtheorem{rem}{Remark}[section]

\input prepictex   \input pictex    \input postpictex

\def\mput #1 #2/{\put{$\bullet_{#1}$} [tl] <-1mm,1mm> at #2}

\def\iff{\Longleftrightarrow}

\begin{document}

\centerline{\textbf{\Large Five Lectures on Cluster Theory}}
\centerline{Ray Maresca}
\medskip

\begin{abstract}

\noindent
In this paper, we will present the author's interpretation and embellishment of five lectures on cluster theory given by Kiyoshi Igusa during the Spring semester of 2022 at Brandeis University. They are meant to be used as an introduction to cluster theory from a representation-theoretic point of view. 
\end{abstract}

\section{Introduction}

\subsection{Some History}

\indent

Since its introduction in the early 2000's by Fomin and Zelevinsky, cluster theory has been an active area of research. Some of the first results in cluster theory, such Fomin and Zelevinsky's classification of finite type cluster algebras in [\ref{ref: FZ finite type classification}], bore striking resemblances to theorems in representation theory such as Gabriel's theorem in [\ref{ref: Gabriel's theorem 1}] and [\ref{ref: Gabriel's theorem 2}]. In particular, every seed or cluster in a cluster algebra has some number, say $n$, cluster variables. This allows us to represent the variables of a cluster algebra in a graph in which there is an edge between any two variables that occur in a cluster. In this graph, which we will see in Section \ref{sec: clusters with modules}, we see that if we have $n-1$ cluster variables connected by edges, there are precisely 2 ways to complete the the graph. \\

On the other hand, many of these phenomena were also being seen in representation theory. For instance in [\ref{ref: S tilting modules}], Skowro\'{n}ski showed that every basic tilting module has $n$ indecomposable summands. Moreover, Happel and Unger showed in [\ref{ref: HU two completions}] that every basic partial tilting module having $n -1$ indecomposable direct summands can always be completed into a tilting module and that there are at most two ways that this can be done. Therefore, one may think that the `right' way to connect cluster theory and representation theory is through tilting theory, though this fails in several ways, one being that there are more cluster variables then there are indecomposable rigid modules and more clusters than tilting modules. \\

To attain a categorification of cluster theory in terms of representation theory, we thus need to extend the module category in some way. This was done by Buan, Marsh, Reineke, Reiten, and Todorov in [\ref{ref: BMRRT categorification of cluster algebras}] where they constructed a larger category called the cluster category in which the original module category can be embedded. In [\ref{ref: BMRRT categorification of cluster algebras}], it was assumed that the quiver $Q$ associated to the initial seed was acyclic, so the path algebra is hereditary, which is an assumption we will also make throughout this paper. This need not be the case and in [\ref{ref: A non hereditary cluster cat}], Amoit removes the condition of $\Bbbk Q$ being hereditary and constructs the cluster category for non-hereditary algebras of global dimension 2 and quivers with potential. \\

There however is still one thing to notice. In the cluster category of a hereditary algebra, we have the following isomorphism from Auslander-Reiten duality: Ext$^1(M,N) \cong $ Hom$(N,\tau M)$. One can show that this isomorphism actually also holds in module categories of non-hereditary cluster algebras. Therefore, there is a correspondence between the tilting objects in the cluster category and modules in the module category of a cluster-tilted algebra. The issue with this is that these modules may not be partial tilting objects due to having infinite projective dimension. By dropping the requirement on projective dimension and loosening rigidity to $\tau$-rigidity, Adachi, Iyama, and Reiten introduced $\tau$-tilting theory in [\ref{ref: AIR tau-tilting}].\\

\subsection{Framework of These Notes}

\indent

Although $\tau$-tilting theory is one of the most active areas of current research, in this paper, we will focus on classical tilting theory and cluster theory. For a survey on $\tau$-tilting theory, we suggest [\ref{ref: T survey}] by Treffinger, where many of the references and much of the background information in these notes were found. In this article, we will illuminate some connections between cluster theory and representation theory while working through the process of categorifying cluster theory when the initial seed corresponds to a quiver with no loops or two cycles. We will do this by working through the author's interpretation and embellishment of 5 lectures given by Kiyoshi Igusa during the spring semester of 2022 at Brandeis University which contain several motivating examples and provide some intuition behind results. One thing to note is that all proofs in the first six sections of this paper are meant to provide the main idea and intuition behind the proof and should be taken as nothing more than sketches. We will assume that the reader has some background in the foundations of representation theory and suggest [\ref{ref: blue book}], [\ref{ref: Bill's notes}], [\ref{ref: Schiffler Quiver Reps}], and [\ref{ref: S general reps}] as references for this material. \\

We will begin these notes with Section \ref{sec: cluster theory} in which we give some examples that provide intuition behind what a cluster algebra is and how clusters and cluster variables are connected to representation theory. We then provide definitions of cluster algebras and mutations in terms of a quiver $Q$. In particular, we will see a connection between cluster variables (characters) and the Auslander-Reiten (AR) quiver of the corresponding initial quiver. Afterward in Section \ref{sec: cluster character} we will explicitly provide the correspondence by showing how to associate a cluster character to a $\Bbbk Q$-module. We will moreover see how the coefficients of this cluster character are related to the module itself. \\

In Section \ref{sec: clusters with modules} we will introduce two questions that we will spend the rest of the notes trying to answer; namely, which sets of modules are sent to clusters and which algebraic objects correspond to the initial cluster variables? These two questions motivated the categorification of cluster theory in terms of representation theory. It is here we will introduce the notion of tilting modules and how they fall short of describing cluster theory in its entirety. We will need to extend the idea of a tilting module by introducing support tilting modules, shifted projectives, and silting pairs. We will do this without introducing the bounded derived category or explicitly creating the cluster category. At this point, we will provide the bijection between clusters and silting pairs. We will finish this section by constructing the wall and chamber structure (stability picture) and show how this structure connects to cluster theory. \\

After this, in Section \ref{sec: rigidity}, we will describe in fact why the stability picture introduced in the previous section is accurate by showing that rigidity is a Zariski open condition. To do this, we will introduce the category of 2-term silting complexes. We finish this section by introducing the notion of stable barcodes which won't actually be used for the remainder of the notes. In the final section, Section \ref{sec: maximal green sequences}, we introduce the notion of maximal green sequences using exchange matrices and ice quivers. The definition of these sequences rely on notions like sign coherence of $g$-vectors and $c$-vectors, which we will also explain in this section. Throughout the notes, we will attempt to provide as much referencing as possible to both history and proofs of the results. \\

Before we begin, we would like to remark that not all connections between cluster and representation theory are made in these notes. For instance, there is a beautiful connection between functorially finite torsion classes in mod-$\Bbbk Q$ and clusters, namely that they are in bijection. One way to see this is through the fact that the dual graph of the stability picture is precisely the Hasse quiver of functorially finite torsion classes in mod-$\Bbbk Q$. For more on the lattice of torsion classes, we suggest Thomas's exposition [\ref{ref: Th survey on lattice of torsion classes}]. For more details on the bijection, Treffinger's survey [\ref{ref: T survey}] is a great place to start.


\section{Cluster Theory}\label{sec: cluster theory}

\subsection{Examples}\label{sec: Examples}

\indent

Before presenting the formal definition of a cluster algebra, we provide some intuition, motivation, and examples. Intuitively, `clusters' are sets of $n$ objects which can be mutated. Each of these $n$ objects are a cluster variable or cluster character. Throughout these lectures, we will provide two methods of thinking about clusters, the former is the original definition and the later is a categorification of it. 

\begin{enumerate}[(a)]
\item Clusters are transcendence bases for $\mathbb{Q}(x_1, x_2, \dots , x_n)$ given by a quiver $Q$. \\
\item Clusters are objects in a category of the form $T = T_1 \oplus T_2 \oplus \dots \oplus T_n$.
\end{enumerate}

One relationship between the above two methods is through something called the `cluster character' denoted by $\chi(T_i) \in \mathbb{Q}(x_1, x_2, \dots , x_n)$. We begin our studies of cluster algebras using method (a) through an example. \\

Until otherwise stated, let $Q$ be the quiver $2 \rightarrow 1 \leftarrow 3$. Note that this quiver consists of only \textbf{descending} arrows, that is, there is an arrow from $i$ to $j$ if and only if $j < i$. Below are three depictions of the Auslander-Reiten (AR) quiver for this quiver. Note that the maps between the projectives are ascending with respect to their vertices; that is, there is an arrow from $P_i$ to $P_j$ in the AR quiver if and only if $i < j$. This is one reason to always take the arrows to be descending when the quiver does not have oriented cycles. Moreover, note that the quiver formed by the three projectives in the AR quiver is the opposite quiver of $Q$. On the top left we have the standard projective/injective at vertex $i$ notation. On the top right we have another standard notation indicating the tops and socles of the modules on the left. Finally, below these two is a depiction of the AR quiver using dimension vectors. In each quiver the dotted lines indicate the AR translate $\tau$:

\begin{center}
\begin{tabular}{c c}
\xymatrix{                             & P_2 \ar[dr] \ar@{.}[rr]  &                             & I_3 \\
				P_1 \ar[ur] \ar[dr] \ar@{.}[rr]  &                  & I_1 \ar[ur] \ar[dr] &       \\
											& P_3 \ar[ur] \ar@{.}[rr]  &                             & I_2} 
 & 
\xymatrix{                             & {2 \atop 1} \ar[dr] \ar@{.}[rr]  &                             & 3 \\
				1 \ar[ur] \ar[dr] \ar@{.}[rr]  &                  & {2 \, 3 \atop 1} \ar[ur] \ar[dr] &       \\
											& {3 \atop 1} \ar[ur] \ar@{.}[rr]  &                             & 2} 
\end{tabular}
\end{center}

\begin{center}
\begin{tabular}{c}
\xymatrix{                             & (1,1,0) \ar[dr] \ar@{.}[rr]  &                             & (0,0,1) \\
				(1,0,0) \ar[ur] \ar[dr] \ar@{.}[rr]  &                  & (1,1,1) \ar[ur] \ar[dr] &       \\
											& (1,0,1) \ar[ur] \ar@{.}[rr]  &                             & (0,1,0)} 
\end{tabular}
\end{center}

The form that uses the dimension vectors is computationally useful and sheds some light onto the relationship between AR theory and cluster theory. Recall that given a short exact sequence of modules $0\rightarrow A \rightarrow B \rightarrow C \rightarrow 0$, the dimension vectors satisfy the equality dim$A \, + $ dim$C = $ dim$B$. Since each mesh in the above AR quiver forms an almost split sequence, which is short exact, we can construct the AR quiver using this relationship and in some sense extend it as follows.

\begin{center}
\begin{tabular}{c}
\xymatrix{                             & (1,1,0) \ar[dr]  &                             & (0,0,1) \ar[dr] &  &(-1,0,-1)\\
				(1,0,0) \ar[ur] \ar[dr]  &                  & (1,1,1) \ar[ur] \ar[dr] &     & (-1,0,0) \ar[ur] \ar[dr]  \\
											& (1,0,1) \ar[ur]  &                             & (0,1,0) \ar[ur] & &(-1, -1, 0)} 
\end{tabular}
\end{center}

These newly added vectors are not dimension vectors in the usual sense since they contain negative entries. From a representation theoretic point of view, we will see in Section \ref{sec: clusters with modules} that these negative `dimension vectors' correspond to `shifted projectives' in the categorification method of studying cluster algebras. Now to the dimension vector $(i_1,i_2,\dots , i_n)$, we associate the symbol $x_1^{i_1}x_2^{i_2}\dots x_n^{i_n}$. Then from the aforementioned additive relationship given by the dimension vectors, we have that given a short exact sequence of modules $0\rightarrow A \rightarrow B \rightarrow C \rightarrow 0$, the symbols satisfy the equality $x^{\text{dim}A}x^{\text{dim}C} = x^{\text{dim}B}$. In this notation, by $x^{\text{dim}A}$ we mean $\prod_{i=1}^n x_i^{\text{dim}A_i}$ where $\text{dim}A_i$ is the $i$th entry in the dimension vector of $A$. Then the above AR quiver can be rewritten in terms of the symbols as follows.

\begin{center}
\begin{tabular}{c}
\xymatrix{                             & x_1x_2 \ar[dr]  &                             & x_3 \ar[dr] &  &{1\over x_1x_3}\\
				x_1 \ar[ur] \ar[dr]  &                  & x_1x_2x_3 \ar[ur] \ar[dr] &     & {1\over x_1} \ar[ur] \ar[dr]  \\
											& x_1x_3 \ar[ur]  &                             & x_2 \ar[ur] & &{1\over x_1x_2}} 
\end{tabular}
\end{center}

\subsection{Cluster Variables}\label{sec: Cluster Variables}

\indent 

These symbols are the cluster variables or cluster characters for the cluster algebra whose initial quiver is $Q$. Though we have not yet defined the cluster variables, they satisfy a nice property. As was shown by Caldero and Chapoton in [\ref{ref: CC equation}], given an almost split sequence of modules $0\rightarrow A \rightarrow B \rightarrow C \rightarrow 0$, the corresponding cluster characters satisfy the relationship $$\chi(A)\chi(C) = \chi(B) + 1$$ where $\chi(B) = \chi(\oplus_i B_i) = \sum_i \chi(B_i)$. Note that if the sequence is neither split nor almost split, we would need to add more than one on the right hand side and if the sequence is split, we would not need to add anything, providing some intuition on why the sequences are almost split but not split. We will see in Section \ref{sec: cluster character} where this plus 1 is coming from. Using this formula for cluster characters, we can reconstruct the AR quiver for our type $\mathbb{A}$ quiver $Q$, along with those `shifted projectives', for the corresponding path algebra by beginning with the opposite quiver as follows.

\begin{center}
\begin{tabular}{c}
\xymatrix{                             & x_2 \ar[dr]  &                             & \tikz \draw (0,0) ellipse (10pt and 5pt); \ar[dr] &  &\tikz \draw (0,0) ellipse (10pt and 5pt);\\
				x_1 \ar[ur] \ar[dr]  &                  & \tikz \draw (0,0) ellipse (10pt and 5pt); \ar[ur] \ar[dr] &     & \tikz \draw (0,0) ellipse (10pt and 5pt); \ar[ur] \ar[dr]  \\
											& x_3 \ar[ur]  &                             & \tikz \draw (0,0) ellipse (10pt and 5pt); \ar[ur] & &\tikz \draw (0,0) ellipse (10pt and 5pt);} 
\end{tabular}
\end{center}

To fill the left most oval, we need a cluster variable $y$ such that $x_1y = x_2x_3 + 1$, so $y = {x_2x_3 + 1 \over x_1}$. We get:

 \begin{center}
\begin{tabular}{c}
\xymatrix{                             & x_2 \ar[dr]  &                             & \tikz \draw (0,0) ellipse (10pt and 5pt); \ar[dr] &  &\tikz \draw (0,0) ellipse (10pt and 5pt);\\
				x_1 \ar[ur] \ar[dr]  &                  & {x_2x_3 + 1 \over x_1} \ar[ur] \ar[dr] &     & \tikz \draw (0,0) ellipse (10pt and 5pt); \ar[ur] \ar[dr]  \\
											& x_3 \ar[ur]  &                             & \tikz \draw (0,0) ellipse (10pt and 5pt); \ar[ur] & &\tikz \draw (0,0) ellipse (10pt and 5pt);} 
\end{tabular}
\end{center}

Continuing in this way will provide us the entire AR quiver and more for $Q$ in terms of the cluster variables, though it will get quite messy quickly. To reduce unnecessary computations, we examine a simpler example. Let $Q$ now be the quiver $1 \leftarrow 2$. Then the AR quiver is 

\begin{center}
\begin{tabular}{c}
\xymatrix{  & (1,1) \ar[dr] \\
				(1,0) \ar[ur] & & (0,1)} 
\end{tabular}
\end{center}

By using the relationship between cluster characters and beginning with the opposite quiver $x_1 \rightarrow x_2$, we get the following.

\begin{center}
\begin{tabular}{c}
\xymatrix{                  & x_2 \ar[dr] &                                          & {x_2 +1+x_1\over x_1x_2} \ar[dr] &                                        & x_1 \ar[dr] &\\
				x_1 \ar[ur] &                   & {x_1 + 1\over x_1} \ar[ur] &                                                      & {1+x_1\over x_2} \ar[ur] &                  & x_2 } 
\end{tabular}
\end{center}

Notice that if we restrict our attention to only the cluster variables with nontrivial denominators, we are looking at the AR quiver of $Q$. These cluster characters correspond to $\Bbbk Q$-modules and the intimate relationship between the dimension vectors and the cluster variables will be revealed in Section \ref{sec: cluster character}. The cluster variables with trivial denominator correspond to the shifted projectives which will be explained in Section \ref{sec: clusters with modules}.

\subsection{Cluster Algebras and Mutation}

\indent

Now that we have a sense of what we want cluster variables to be, we are ready to provide a formal definition. The definitions of a cluster algebra and mutation was first written down by Fomin and Zelevinski in [\ref{ref: FZ cluster algbras}]. The original definition used the notion of an exchange matrix instead of a quiver; however as we will soon see, these two notions are one in the same.

\begin{defn}
 A \textbf{cluster algebra} is a subalgebra $A\subset \mathbb{Q}(x_1,x_2,\dots, x_n)$ generated by the cluster variables given by mutating a seed $(Q,x_{*})$ where
\begin{itemize}
\item $Q$ is a quiver with no loops or two cycles. 
\item $x_{*} = (x_1,x_2,\dots,x_n)$ is a transcendence basis for $\mathbb{Q}(x_1,x_2,\dots, x_n) = \{{f(x)\over g(x)} : f,g \in \mathbb{Q}(x_1,x_2,\dots, x_n) \}$. 
\end{itemize} 
\end{defn}

In order for this definition to be complete, we must define mutation which consists of two parts, mutation of the quiver and the seed. We begin by defining mutation of a quiver $Q$ with a running example. For the following definition, let $Q'$ be the quiver 

\begin{center}
\begin{tabular}{c}
\xymatrix{                             & 2 \ar[dr]^{\beta}   &                             \\
				1 \ar[ur]^{\alpha}     &                  & 3 \ar@<.5ex>[ll] \ar@<-.5ex>[ll]       }
\end{tabular}
\end{center}

\begin{defn} \label{def: quiver mutation}
We define the \textbf{mutation of $Q$ at vertex $k$}, denoted by $\mu_kQ$, as follows.

\begin{enumerate}
\item We first compose any length two paths through $k$ by introducing a new arrow. In the running example, let $k=2$.

\begin{center}
\begin{tabular}{c}
\xymatrix{                             & 2 \ar[dr]^{\beta}   &                             \\
				                                            1 \ar[ur]^{\alpha} \ar@<.5ex>[rr]^{\alpha\beta}    &                  & 3 \ar@<.5ex>[ll] \ar@<1.5ex>[ll]        }
\end{tabular}
\end{center}
\item Reverse all arrows at $k$:
\begin{center}
\begin{tabular}{c}
\xymatrix{                             & 2 \ar[dl]_{\alpha^*}   &                             \\
				                                            1  \ar@<.5ex>[rr]^{\alpha\beta}    &                  & 3 \ar@<.5ex>[ll] \ar@<1.5ex>[ll] \ar[ul]_{\beta^*}       }
\end{tabular}
\end{center}

\item Eliminate all two cycles:

\begin{center}
\begin{tabular}{c}
\xymatrix{                             & 2 \ar[dl]_{\alpha^*}   &                             \\
				                                            1    &                  & 3 \ar@<.5ex>[ll] \ar[ul]_{\beta^*}       }
\end{tabular}
\end{center}

\end{enumerate}

After all three steps, the resulting quiver is $\mu_kQ$, or in this example $\mu_2Q'$. 
\end{defn}
Notice that the procedure of mutation does not preserve representation type of $Q$. To provide another example of mutation:

\begin{exmp}\label{exmp: mutation of type A quiver}
Let $Q$ be the quiver $2\rightarrow 1\leftarrow 3$. Then $\mu_1Q$ is the quiver $2 \leftarrow 1 \rightarrow 3$.
\end{exmp}

Now is a good time to introduce the notion of exchange matrices, which will become important in the study of maximal green sequences which will be defined in Section \ref{sec: maximal green sequences}. 

\begin{defn} \label{defn: exchange matrix}
Let $Q$ be a quiver. The \textbf{exchange matrix} of $Q$ is the matrix $B = [b_{ij}]$ where $b_{ij} =$ the number of arrows $i\rightarrow j$ - the number of arrows $j \rightarrow i$.
\end{defn} 

\begin{exmp}\label{exmp: exchange matrix}
For the running example in Definition \ref{def: quiver mutation}, the exchange matrix is $$\left[ \vcenter{\xymatrixrowsep{10pt}\xymatrixcolsep{10pt}\xymatrix{0&1&-1\\-1&0&2\\1&-2&0}} \right]$$
\end{exmp}

Notice that the exchange matrix in Example \ref{exmp: exchange matrix} is skew symmetric, that is, all entries are integers and $b_{ij} = -b_{ji}$. This holds more generally; in fact, any skew symmetric matrix gives a quiver with no loops or two cycles and any quiver with no loops or two-cycles has a skew symmetric exchange matrix, so really in our definition of cluster algebra, we are only considering `skew symmetric' cluster algebras. We are now ready to define mutation of the seed $x_*$.

\begin{defn}\label{defn: mutation of seed}
We define the \textbf{mutation of $x_*$ at vertex $k$}, denoted by $\mu_k(x_*)$, as $\mu_k(x_*) = (x_1, x_2, \dots , x'_k, x_{k+1}, \dots, x_n)$ where $$x'_k = {\displaystyle \prod_{i\rightarrow k} x_i^{b_{ik}} + \displaystyle \prod_{k \rightarrow j} x_j^{b_{kj}} \over x_k}.$$
The notation $\displaystyle \prod_{i\rightarrow k}$ means we take the product over all vertices $i$ such that there is an arrow from $i$ to $k$. We adopt the standard convention that the empty product equals 1.
\end{defn}

\begin{exmp}\label{exmp: mutation of mutated seed}
Let  $Q$ be the quiver $2\rightarrow 1\leftarrow 3$ and consider the seed $(x_1,x_2,x_3)$. Then mutation at vertex 1 gives $\mu_1(Q,(x_1,x_2,x_3)) = (\mu_1Q, ({x_2x_3 + 1 \over x_1}, x_2, x_3))$ with $\mu_1Q$ the quiver from Example \ref{exmp: mutation of type A quiver}. Notice that the plus one in the mutated seed comes from the fact that vertex 1 in $Q$ is a sink, so there are no arrows out of it, and the empty product is always taken to be 1. Continuing by mutating at vertex 2 then 3 gives the following: 

\begin{center}
\begin{tabular}{c}
\xymatrix{           ( 2 \leftarrow 1 \rightarrow 3,  ({x_2x_3 + 1 \over x_1}, x_2, x_3))  \ar[d]_{\mu_2} \\
	                      ( 2 \rightarrow 1 \rightarrow 3,  ({x_2x_3 + 1 \over x_1}, {x_2x_3 + 1 + x_1 \over x_1x_2}, x_3))  \ar[d]_{\mu_3} \\
                          ( 2 \rightarrow 1 \leftarrow 3,  ({x_2x_3 + 1 \over x_1}, {x_2x_3 + 1 + x_1 \over x_1x_2}, {x_2x_3 + 1 + x_1 \over x_1x_3}))   }
\end{tabular}
\end{center}

We can visualize the mutations of these cluster variables as follows. Recall the construction of the AR quiver for $Q: 2 \rightarrow 1 \leftarrow3$ from Section \ref{sec: Cluster Variables}. Continuing this procedure would give the picture in Figure \ref{fig: A visualization of mutation in the AR quiver}. Within each loop we have the opposite quiver of the mutated quiver along with the mutated seed after performing the indicated mutation. Moreover, each loop encloses a \textbf{cluster}; that is, a collection of $n$ cluster variables attained by mutations.

\begin{center}
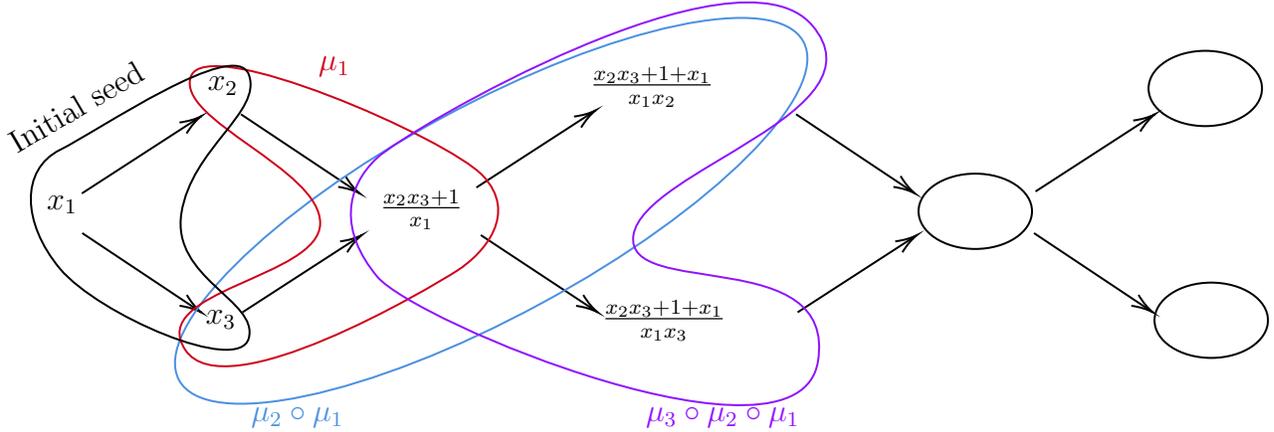
\begin{figure}[h!]

\tikzset{every picture/.style={line width=0.75pt}} 

\begin{tikzpicture}[x=0.75pt,y=0.75pt,yscale=-1,xscale=1]

\draw    (42.22,158.01) -- (100.56,196.9) ;
\draw [shift={(102.22,198.01)}, rotate = 213.69] [color={rgb, 255:red, 0; green, 0; blue, 0 }  ][line width=0.75]    (10.93,-3.29) .. controls (6.95,-1.4) and (3.31,-0.3) .. (0,0) .. controls (3.31,0.3) and (6.95,1.4) .. (10.93,3.29)   ;
\draw    (42,138) -- (100.54,100.09) ;
\draw [shift={(102.22,99.01)}, rotate = 147.08] [color={rgb, 255:red, 0; green, 0; blue, 0 }  ][line width=0.75]    (10.93,-3.29) .. controls (6.95,-1.4) and (3.31,-0.3) .. (0,0) .. controls (3.31,0.3) and (6.95,1.4) .. (10.93,3.29)   ;
\draw    (122.22,98.01) -- (180.56,136.9) ;
\draw [shift={(182.22,138.01)}, rotate = 213.69] [color={rgb, 255:red, 0; green, 0; blue, 0 }  ][line width=0.75]    (10.93,-3.29) .. controls (6.95,-1.4) and (3.31,-0.3) .. (0,0) .. controls (3.31,0.3) and (6.95,1.4) .. (10.93,3.29)   ;
\draw    (123,198) -- (181.54,160.09) ;
\draw [shift={(183.22,159.01)}, rotate = 147.08] [color={rgb, 255:red, 0; green, 0; blue, 0 }  ][line width=0.75]    (10.93,-3.29) .. controls (6.95,-1.4) and (3.31,-0.3) .. (0,0) .. controls (3.31,0.3) and (6.95,1.4) .. (10.93,3.29)   ;
\draw    (241,135) -- (299.54,97.09) ;
\draw [shift={(301.22,96.01)}, rotate = 147.08] [color={rgb, 255:red, 0; green, 0; blue, 0 }  ][line width=0.75]    (10.93,-3.29) .. controls (6.95,-1.4) and (3.31,-0.3) .. (0,0) .. controls (3.31,0.3) and (6.95,1.4) .. (10.93,3.29)   ;
\draw    (243.22,159.01) -- (300.55,196.9) ;
\draw [shift={(302.22,198.01)}, rotate = 213.47] [color={rgb, 255:red, 0; green, 0; blue, 0 }  ][line width=0.75]    (10.93,-3.29) .. controls (6.95,-1.4) and (3.31,-0.3) .. (0,0) .. controls (3.31,0.3) and (6.95,1.4) .. (10.93,3.29)   ;
\draw    (403,198) -- (461.54,160.09) ;
\draw [shift={(463.22,159.01)}, rotate = 147.08] [color={rgb, 255:red, 0; green, 0; blue, 0 }  ][line width=0.75]    (10.93,-3.29) .. controls (6.95,-1.4) and (3.31,-0.3) .. (0,0) .. controls (3.31,0.3) and (6.95,1.4) .. (10.93,3.29)   ;
\draw    (402.22,98.01) -- (460.56,136.9) ;
\draw [shift={(462.22,138.01)}, rotate = 213.69] [color={rgb, 255:red, 0; green, 0; blue, 0 }  ][line width=0.75]    (10.93,-3.29) .. controls (6.95,-1.4) and (3.31,-0.3) .. (0,0) .. controls (3.31,0.3) and (6.95,1.4) .. (10.93,3.29)   ;
\draw   (464,147) .. controls (464,136.51) and (476.81,128) .. (492.61,128) .. controls (508.41,128) and (521.22,136.51) .. (521.22,147) .. controls (521.22,157.5) and (508.41,166.01) .. (492.61,166.01) .. controls (476.81,166.01) and (464,157.5) .. (464,147) -- cycle ;
\draw    (522.22,158.01) -- (580.56,196.9) ;
\draw [shift={(582.22,198.01)}, rotate = 213.69] [color={rgb, 255:red, 0; green, 0; blue, 0 }  ][line width=0.75]    (10.93,-3.29) .. controls (6.95,-1.4) and (3.31,-0.3) .. (0,0) .. controls (3.31,0.3) and (6.95,1.4) .. (10.93,3.29)   ;
\draw    (523,137) -- (581.54,99.09) ;
\draw [shift={(583.22,98.01)}, rotate = 147.08] [color={rgb, 255:red, 0; green, 0; blue, 0 }  ][line width=0.75]    (10.93,-3.29) .. controls (6.95,-1.4) and (3.31,-0.3) .. (0,0) .. controls (3.31,0.3) and (6.95,1.4) .. (10.93,3.29)   ;
\draw   (580,85) .. controls (580,74.51) and (592.81,66) .. (608.61,66) .. controls (624.41,66) and (637.22,74.51) .. (637.22,85) .. controls (637.22,95.5) and (624.41,104.01) .. (608.61,104.01) .. controls (592.81,104.01) and (580,95.5) .. (580,85) -- cycle ;
\draw   (583,202) .. controls (583,191.51) and (595.81,183) .. (611.61,183) .. controls (627.41,183) and (640.22,191.51) .. (640.22,202) .. controls (640.22,212.5) and (627.41,221.01) .. (611.61,221.01) .. controls (595.81,221.01) and (583,212.5) .. (583,202) -- cycle ;
\draw  [color={rgb, 255:red, 74; green, 144; blue, 226 }  ,draw opacity=1 ] (91.03,232.21) .. controls (77.04,206.43) and (136.18,147.28) .. (223.12,100.09) .. controls (310.07,52.9) and (391.9,35.55) .. (405.89,61.33) .. controls (419.88,87.11) and (360.74,146.26) .. (273.79,193.45) .. controls (186.85,240.63) and (105.02,257.99) .. (91.03,232.21) -- cycle ;
\draw  [color={rgb, 255:red, 208; green, 2; blue, 27 }  ,draw opacity=1 ] (97,79) .. controls (104.78,56.99) and (226.22,110.01) .. (244.22,129.01) .. controls (262.22,148.01) and (245.22,168.01) .. (231.22,177.01) .. controls (217.22,186.01) and (114.22,247.01) .. (94.22,217.01) .. controls (74.22,187.01) and (162.26,180.59) .. (162.22,153.01) .. controls (162.18,125.42) and (89.22,101.01) .. (97,79) -- cycle ;
\draw   (31,116) .. controls (51,106) and (119.22,59.01) .. (126.22,78.01) .. controls (133.22,97.01) and (97.34,115.07) .. (92.22,148.01) .. controls (87.1,180.94) and (131.22,192.01) .. (126.22,211.01) .. controls (121.22,230.01) and (50.22,199.01) .. (31,176) .. controls (11.78,152.99) and (11,126) .. (31,116) -- cycle ;
\draw  [color={rgb, 255:red, 144; green, 19; blue, 254 }  ,draw opacity=1 ] (191,120) .. controls (207.22,105.01) and (372.22,9.01) .. (410.22,53.01) .. controls (448.22,97.01) and (325.34,126.07) .. (320.22,159.01) .. controls (315.1,191.94) and (423.22,163.01) .. (413.22,223.01) .. controls (403.22,283.01) and (210.22,203.01) .. (191,180) .. controls (171.78,156.99) and (174.78,134.99) .. (191,120) -- cycle ;

\draw (23,137.4) node [anchor=north west][inner sep=0.75pt]    {$x_{1}$};
\draw (104,77.4) node [anchor=north west][inner sep=0.75pt]    {$x_{2}$};
\draw (103,195.4) node [anchor=north west][inner sep=0.75pt]    {$x_{3}$};
\draw (191,134.4) node [anchor=north west][inner sep=0.75pt]    {$\frac{x_{2} x_{3} +1}{x_{1}}$};
\draw (297,72.4) node [anchor=north west][inner sep=0.75pt]    {$\frac{x_{2} x_{3} +1+x_{1}}{x_{1} x_{2}}$};
\draw (303,190.4) node [anchor=north west][inner sep=0.75pt]    {$\frac{x_{2} x_{3} +1+x_{1}}{x_{1} x_{3}}$};
\draw (160,67.4) node [anchor=north west][inner sep=0.75pt]  [color={rgb, 255:red, 208; green, 2; blue, 27 }  ,opacity=1 ]  {$\mu _{1}$};
\draw (1.77,107.83) node [anchor=north west][inner sep=0.75pt]  [rotate=-329.69] [align=left] {Initial seed};
\draw (126,244.4) node [anchor=north west][inner sep=0.75pt]  [color={rgb, 255:red, 74; green, 144; blue, 226 }  ,opacity=1 ]  {$\mu _{2} \circ \mu _{1}$};
\draw (325,244.34) node [anchor=north west][inner sep=0.75pt]  [color={rgb, 255:red, 144; green, 19; blue, 254 }  ,opacity=1 ]  {$\mu _{3} \circ \mu _{2} \circ \mu _{1}$};

\end{tikzpicture}
\caption{A visualization of mutation in the AR quiver}
\label{fig: A visualization of mutation in the AR quiver}
\end{figure}

\end{center} 

\end{exmp}
\vspace{-1 cm}
In all of the examples we have done so far, the cluster variables were all Laurent polynomials; that is, they are of the form ${f(x)\over x^{\alpha}}$ where $f(x)$ is a polynomial in $\mathbb{N}(x_1, x_2, \dots , x_n)$ and $\alpha \in \mathbb{Z}^n$. It turns out that whether or not this phenomenon holds in general has been an open question called the positivity conjecture first stated by Fomin and Zelevinsky in 2002 in [\ref{ref: FZ cluster algbras}]. Around 2015 in [\ref{ref: LS positivity}], Lee and Schiffler proved the positivity conjecture for all cluster algebras for which the exchange matrix is skew symmetric, called \textbf{skew symmetric} cluster algebras, which is the convention we take in these notes for all our cluster algebras. The general case was proven in 2017 by Gross, Hacking, Keel, and Kontsevich in [\ref{ref: GHKK sign coherence c-vectors}]. We have the following theorem.

\begin{thm}{\color{white}.}\label{Thm: cluster variables are Laurent Polynomials}
\begin{itemize}
\item Cluster variables are Laurant polynomials.
\item The numerator of the Laurant polynomial always has nonnegative integral coefficients.
\end{itemize}
\end{thm}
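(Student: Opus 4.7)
My plan is to handle the two bullets separately: the Laurent phenomenon (first bullet), which is essentially the original 2002 result of Fomin--Zelevinsky, and positivity (second bullet), for which I would use a categorification/Caldero--Chapoton approach consistent with the viewpoint of these notes. The Laurent phenomenon is structural and tractable by induction; positivity is the deep part and is the main obstacle.

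For the Laurent statement, I would induct on the mutation distance $d$ from the initial seed $(Q, x_*)$. The base case $d=0$ is trivial. For the inductive step, suppose all variables at distance $\leq d$ are Laurent in $x_1, \ldots, x_n$, and consider a variable $y$ obtained by mutating at some vertex $k$ in a seed $(Q', y_*)$ at distance $d$. The exchange relation of Definition \ref{defn: mutation of seed} gives
$$y = \frac{M_1(y_*) + M_2(y_*)}{y_k},$$
which is manifestly a Laurent expression in $y_*$. The trouble is that substituting the inductive Laurent expressions of the $y_i$ into this formula could produce cancellations that introduce non-monomial denominators. To rule this out I would invoke the caterpillar lemma of Fomin--Zelevinsky: it suffices to check Laurent-ness along any fixed path $k_1, k_2, \ldots, k_d$ of mutations, and the key combinatorial identity in the caterpillar lemma shows that the denominator after a ``back-and-forth'' step $\mu_k \mu_\ell \mu_k$ stays a monomial in the cluster preceding it, propagating Laurent-ness across all of the exchange tree.

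For positivity, I would use the categorification that the later sections of the notes introduce. Each non-initial cluster variable corresponds to an indecomposable rigid module $M$, and its Caldero--Chapoton character (the subject of Section \ref{sec: cluster character}) takes the form
$$\chi(M) \;=\; \frac{1}{x^{\underline{\dim}\, M}} \sum_{e} \chi\bigl(\mathrm{Gr}_e(M)\bigr)\, \prod_i x_i^{f_i(e)},$$
where $\mathrm{Gr}_e(M)$ is the quiver Grassmannian of submodules of $M$ of dimension vector $e$, $\chi(-)$ denotes Euler characteristic, and $f_i(e)$ is a linear function of $e$ coming from the exchange matrix of $Q$. The denominator is already a monomial, so the coefficients of the numerator are precisely the integers $\chi(\mathrm{Gr}_e(M))$. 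The crux of positivity thus reduces to showing $\chi(\mathrm{Gr}_e(M)) \geq 0$ for every indecomposable rigid $M$. In the acyclic hereditary case this follows, after Nakajima and Qin, from an explicit cellular decomposition of the Grassmannian associated to rigid representations, which delivers nonnegative integers directly. The final step is to verify via the Caldero--Chapoton multiplication formula (the $\chi(A)\chi(C)=\chi(B)+1$ relation mentioned in Section \ref{sec: Cluster Variables}) that these $\chi(M)$ indeed coincide with the cluster variables obtained through iterated mutation from the initial seed.

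The clear obstacle is this nonnegativity of the Grassmannian Euler characteristics. For the fully general skew-symmetric case, beyond the hereditary setting we focus on, I would substitute the Lee--Schiffler argument, which gives an explicit combinatorial expansion of each cluster variable as a sum of monomials indexed by certain colored subgraphs whose contributions are manifestly nonnegative, and then check that this expansion satisfies the same exchange recursion as the cluster variables themselves. That matching between an \emph{a priori} positive combinatorial object and the recursively defined cluster variables is where the real technical weight sits, and it is handled by a delicate induction on mutation distance that leans on the Laurent phenomenon already established in the first bullet.
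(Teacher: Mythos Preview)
The paper does not prove this theorem at all. It is stated as a black-box result: the paragraph preceding the theorem attributes the Laurent phenomenon to Fomin--Zelevinsky and positivity to Lee--Schiffler (skew-symmetric case) and Gross--Hacking--Keel--Kontsevich (general case), and then simply moves on. Your outline is therefore substantially more than the paper offers, and it is a reasonable sketch of the literature the paper cites: the caterpillar lemma is indeed the engine of the original Fomin--Zelevinsky Laurent argument, and the Caldero--Chapoton reduction of positivity to nonnegativity of quiver-Grassmannian Euler characteristics for rigid modules is exactly the categorification story that Sections~\ref{sec: cluster character} and~\ref{sec: clusters with modules} go on to develop. The only caveat is that the CC route as you describe it settles only the acyclic hereditary case directly, which you acknowledge; for the full skew-symmetric statement the paper asserts, Lee--Schiffler is the right reference, and your description of their method as matching a manifestly positive combinatorial expansion against the mutation recursion is accurate at the level of an outline.
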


The fact that the coefficients of the numerator are nonnegative integers may lead us to believe that they count something. This is indeed the case as we will see in Section \ref{sec: cluster character}.

\section{Cluster Character} \label{sec: cluster character}

\indent 

In this section, we begin by showing how to attain the cluster character associated to a $\Bbbk Q$-module where $Q$ is a connected quiver with $n$ vertices and no oriented cycles; that is, $\Bbbk Q$ is a hereditary algebra which is an assumption we will take throughout the remainder of the notes unless otherwise specified. We then explore the connections between the cluster character and notions like quiver grassmannians and the Fomin-Zelevinsky formula for mutation of cluster variables. To do this, we need the notion of $g$-vectors, which arose from a purely cluster-algebraic perspective by Fomin and Zelevinsky in [\ref{ref: FZ g-vectors}]. Afterward, we realized that $g$-vectors can be studied in a representation-theoretic way through projective presentations. The first such realization was done by Dehy and Keller in [\ref{ref: DK rep theory g-vectors}]. Let $\Bbbk = \overline{\Bbbk}$ be an algebraically closed field. 

\begin{defn}{\color{white}.}
\begin{itemize}
\item The \textbf{$g$-vector} of a projective $\Bbbk Q$-module $P = \oplus a_iP_i$ is the vector $g(P) := \vec{a} = (a_1, a_2, \dots, a_n)$.
\item Let $M$ be a $\Bbbk Q$ module. Then $M$ admits a minimal projective presentation of the form $0\rightarrow P'_M\rightarrow P_M \rightarrow M \rightarrow 0$ where both $P'_M$ and $P_M$ are projective. The \textbf{$g$-vector} of $M$ is $g(M):= g(P_M) - g(P'_M)$.
\end{itemize}
\end{defn}

Now we provide some intuition behind how to think of the cluster character with an example.

\begin{exmp}
Let $Q = 2\rightarrow 1\leftarrow 3$. Referring back to the AR quiver given in Section \ref{sec: Examples} and Figure \ref{fig: A visualization of mutation in the AR quiver}, we see that the cluster character of $S_1 = P_1$ should be $\chi(S_1) = {x_2x_3 + 1\over x_1} = {x_2x_3\over x_1} + {1\over x_1}$. The module $S_1$ has two submodules, namely $0$ and $S_1$, and these should correspond to the left and right terms in the above sum respectively. The module $P_3 = {3 \atop 1}$ has three submodules: $0, S_1, P_3$. Again comparing the AR quiver and Figure \ref{fig: A visualization of mutation in the AR quiver}, we see that $\chi(P_3) = {x_{2} x_{3} +1+x_{1}\over x_{1} x_{3}} = {x_2\over x_1} + {1 \over x_1x_3} + {1\over x_3}$ and from left to right, these three terms should correspond to $0, S_1,$ and $P_3$ respectively.
\end{exmp}

To see how this correspondence works in general, we make the following definition which can be found in [\ref{ref: CC equation}].

\begin{defn}
The \textbf{cluster character} of the $\Bbbk Q$-module $M$ is $\chi(M) := \displaystyle \sum_{V \subset M} \chi(M,V)$, with $$\chi(M,V) := x^{-g(V)}x^{-g(D(M/V))}$$ where $D:\Bbbk Q$-mod$\rightarrow$ mod-$\Bbbk Q^{op}$ denotes the duality functor and $D(M/V)$ denotes the dual of the quotient module $M/V$.
\end{defn}

Let's now verify the results from the previous example.

\begin{exmp}\label{exmp: Computing Cluster Character}
We begin by computing the necessary $g$-vectors. Since $S_1 = P_1$ is projective, by definition we have $g(S_1) = (1,0,0)$. Similarly, $g(P_3) = (0,0,1)$. To compute $\chi(P_3)$ we also need $g(D(P_3/S_1)) = g(D(S_3))$ and $g(D(P_3/0)) = g(D(P_3))$. The former is easier than the later since $D(S_3)$ is the $\Bbbk Q^{op}$ representation $0 \leftarrow 0 \rightarrow \Bbbk$, hence it is a projective $\Bbbk Q^{op}$-module. Therefore $g(D(S_3)) = (0,0,1)$. To compute $g(D(P_3))$, note that $D(P_3))$ is the $\Bbbk Q^{op}$ representation $0 \leftarrow \Bbbk \rightarrow \Bbbk = {1\atop 3}$. We have a $\Bbbk Q^{op}$ minimal projective presentation given by $0\rightarrow 2 \rightarrow {1\atop 2 \, 3} \rightarrow {1 \atop 3} \rightarrow 0$. Thus $g(D(P_3)) = (1,0,0) - (0,1,0) = (1,-1,0)$. Finally, we need $g(D(S_1))$ to compute $\chi(S_1)$. Since $D(S_1)$ is the $\Bbbk Q^{op}$ representation $0 \leftarrow \Bbbk \rightarrow 0 = I_1$, we have a $\Bbbk Q^{op}$ minimal projective presentation given by $0\rightarrow 2 \oplus 3 \rightarrow {1\atop 2 \, 3} \rightarrow 1 \rightarrow 0$. Therefore $g(D(S_1)) = (1,0,0) - (0,1,0) - (0,0,1) = (1,-1,-1)$.  We are now ready to compute the cluster characters:
\vspace{-1cm}
\begin{multicols}{2}

\begin{align*}
\chi(P_3) &= \chi(P_3,0) + \chi(P_3,S_1) + \chi(P_3,P_3) \\
			  &=x^{0}x^{-g(D(P_3))} +  x^{-g(S_1)}x^{-g(D(S_3))} + x^{-g(P_3)}x^{-g(D(0))} \\
			  &= x^{0}x^{(-1,1,0)} +  x^{(-1,0,0)}x^{(0,0,-1)} + x^{(-1,0,0)}x^{0} \\
			  &={x_2\over x_1} + {1\over x_1x_3} + {1\over x_1}
\end{align*}

\columnbreak

\vspace{-2cm}
\begin{align*}
\chi(S_1) &= \chi(S_1,0) + \chi(S_1,S_1) \\
			  &=x^{0}x^{-g(D(S_1))} +  x^{-g(S_1)}x^{0} \\
			  &= x^{0}x^{(-1,1,1)} +  x^{(-1,0,0)}x^{0} \\
			  &={x_2x_3\over x_1} + {1\over x_1} 
\end{align*}

\end{multicols}

\end{exmp}

Notice that $\chi(S_1)$ in Example \ref{exmp: Computing Cluster Character} is the same as $\mu_1((x_1,x_2,x_3))$ from Example \ref{exmp: mutation of mutated seed}. This is indeed not a coincidence. Suppose we have a have a general quiver without oriented cycles. Then at vertex $k$, $Q$ and $Q^{op}$ appear locally as depicted on the left and right of the following diagram.

\begin{center}
\begin{tabular}{c c}
\xymatrix{            i_1 \ar[drr]                & i_2 \ar[dr]  &         \dots                           & i_l \ar [dl]\\
				                                           &                    &    k  \ar[dll]   \ar[dl]   \ar[dr]                             &       \\
						   j_1                & j_2    &          \dots                  & j_r } 
 & 
\xymatrix{            i_1                & i_2   &         \dots                           & i_l \\
				                                           &                    &    k  \ar[ull]   \ar[ul]   \ar[ur]                             &       \\
						   j_1  \ar[urr]               & j_2 \ar[ur]    &          \dots                  & j_r \ar [ul] } 
\end{tabular}
\end{center}

We attain a minimal projective $\Bbbk Q$ resolution of $S_k$ by $P_{j_1}\oplus P_{j_2}\oplus \dots \oplus P_{j_r} \hookrightarrow P_k \twoheadrightarrow S_k$ and a minimal projective $\Bbbk Q^{op}$ resolution of $D(S_k)$ given by $P_{i_1}\oplus P_{i_2}\oplus \dots \oplus P_{i_l} \hookrightarrow P_k \twoheadrightarrow S_k$. Therefore $x^{-g(D(S_k))} = {\prod x_i\over x_k}$ and $x^{-g(S_k)} = {\prod x_j\over x_k}$. Finally, we attain a formula for $\chi(S_k)$ given by
\vspace{-.25cm}
$$ \chi(S_k) = \chi(S_k,0) + \chi(S_k,S_k) = {\displaystyle \prod_{i\rightarrow k} x_i\over x_k} + {\displaystyle \prod_{k\rightarrow j} x_j\over x_k} = {\displaystyle \prod_{i\rightarrow k} x_i + \displaystyle \prod_{k\rightarrow j} x_j\over x_k}.$$

But this is precisely the formula for mutation at vertex $k$ given by Fomin and Zelevinsky in Definition \ref{defn: mutation of seed}! We conclude that mutation at vertex $k$ of the seed $(x_1, x_2, \dots , x_n)$ is precisely the cluster character of the simple at vertex $k$. As we have seen from the previous examples, the cluster characters seem to count the number of submodules. This is the case for modules that have finitely many submodules. The following theorem, which follows from Theorem \ref{thm: CC equation}, provides this result.

\begin{thm}\label{thm: what does the cluster character say finite case}
By setting all $x_i = 1$, we get that $\chi(M)|_{(1,1,\dots,1)}$ is precisely the number of submodules of $M$ so long as $M$ has finitely many submodules.
\end{thm}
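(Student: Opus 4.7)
The plan is to evaluate the defining sum $\chi(M) = \sum_{V \subset M} \chi(M,V)$ directly at the point $(x_1,\dots,x_n) = (1,\dots,1)$, and then to observe that each summand collapses to $1$. First I would unpack the definition: by construction,
$$\chi(M,V) = x^{-g(V)} x^{-g(D(M/V))} = \prod_{i=1}^{n} x_i^{-g(V)_i - g(D(M/V))_i},$$
so each summand is a Laurent monomial in the variables $x_1,\dots,x_n$, with integer (possibly negative) exponents determined by the $g$-vectors of $V$ and of $D(M/V)$.

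Next I would observe the key pointwise fact: for \emph{any} tuple of integers $(a_1,\dots,a_n) \in \mathbb{Z}^n$, the Laurent monomial $\prod_i x_i^{a_i}$ evaluates to $\prod_i 1^{a_i} = 1$ at $(1,\dots,1)$. Applying this to each summand gives $\chi(M,V)\big|_{(1,\dots,1)} = 1$, uniformly in the submodule $V$ and independently of whatever $g$-vectors appear. Substituting into the defining sum then produces
$$\chi(M)\big|_{(1,\dots,1)} = \sum_{V \subset M} 1 = \#\{V : V \subset M\},$$
which is exactly the claim.

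The role of the finiteness hypothesis is solely to make sense of this evaluation: if $M$ had infinitely many submodules, the sum defining $\chi(M)$ would be an infinite formal expression and specialization of the variables would not a priori produce a finite integer. Under the assumption that $M$ has finitely many submodules, the sum is finite on the nose and no convergence or formal-power-series argument is needed. I do not expect any genuine obstacle here: given the explicit formula for $\chi(M)$ already stated in Section~\ref{sec: cluster character}, the proof is essentially a one-line specialization argument. The only thing worth double-checking is the convention that $\sum_{V \subset M}$ runs over \emph{all} submodules of $M$ (including $0$ and $M$ itself), as opposed to proper submodules or isomorphism classes, so that the count on the right-hand side really is the full submodule count.
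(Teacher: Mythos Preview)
Your argument is correct and is essentially the paper's approach: the paper does not spell out a proof but simply notes that the statement follows from the Caldero--Chapoton formula (Theorem~\ref{thm: CC equation}), and your specialization argument is exactly how one extracts the result from that formula. The only cosmetic difference is that the paper points to the Grassmannian-indexed version $\chi(M) = \sum_{\bm{e}} \chi(\text{Gr}(M,\bm{e}))\,x^{-g(\bm{e})}x^{-g(D(M/\bm{e}))}$, so evaluating at $x_i=1$ gives $\sum_{\bm{e}} \chi(\text{Gr}(M,\bm{e}))$, and one then uses that the Euler characteristic of a finite set of points is its cardinality; you instead use the submodule-indexed definition directly, which collapses this last step.
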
 

Using this theorem, we can can reconstruct the AR quiver for $Q$ when all $\Bbbk Q$-modules have finitely many submodules. Suppose the module $B = B_1 \oplus B_2$. By evaluating the cluster character at 1 and using the relationship $\chi(A)\chi(C) = \chi(B)+1$ for almost split sequences $A\hookrightarrow B \twoheadrightarrow C$, we have that the information of being an almost split sequence is encoded in the fact that the determinant of the following matrix is one $$\left[ \begin{array}{cc} \chi(A) & \chi(B_1) \\ \chi(B_2) & \chi(C) \\ \end{array} \right].$$ Moreover note that $\chi(M)=0$ if and only if $M = 0$.

\begin{exmp}\label{exmp: reconstructing the ar quiver using number of submodules}
Let $Q$ be the quiver $1\leftarrow 2\leftarrow3$. We begin with the array 

\begin{center}

\tikzset{every picture/.style={line width=0.75pt}} 

\begin{tikzpicture}[x=0.75pt,y=0.75pt,yscale=-1,xscale=1]

\draw   (113.46,172.16) -- (139.92,145.97) -- (166.37,172.69) -- (139.91,198.88) -- cycle ;

\draw (94,163.4) node [anchor=north west][inner sep=0.75pt]    {$1$};
\draw (134,122.4) node [anchor=north west][inner sep=0.75pt]    {$1$};
\draw (174,82.4) node [anchor=north west][inner sep=0.75pt]    {$1$};
\draw (214,42.4) node [anchor=north west][inner sep=0.75pt]    {$1$};
\draw (135,202.4) node [anchor=north west][inner sep=0.75pt]    {$1$};
\draw (294,202.4) node [anchor=north west][inner sep=0.75pt]    {$1$};
\draw (214,202.4) node [anchor=north west][inner sep=0.75pt]    {$1$};
\draw (374,202.4) node [anchor=north west][inner sep=0.75pt]    {$1$};

\end{tikzpicture}

\end{center}

The first rhombus indicates the first matrix we must complete, namely, $\left[ \begin{array}{cc} 1 & 1 \\ 1 &  \\ \end{array} \right].$ Solving for the missing entry to ensure this matrix has determinant one gives the first missing entry.

\begin{center}

\tikzset{every picture/.style={line width=0.75pt}} 

\begin{tikzpicture}[x=0.75pt,y=0.75pt,yscale=-1,xscale=1]

\draw   (153.46,131.16) -- (179.92,104.97) -- (206.37,131.69) -- (179.91,157.88) -- cycle ;

\draw (94,163.4) node [anchor=north west][inner sep=0.75pt]    {$1$};
\draw (134,122.4) node [anchor=north west][inner sep=0.75pt]    {$1$};
\draw (174,82.4) node [anchor=north west][inner sep=0.75pt]    {$1$};
\draw (214,42.4) node [anchor=north west][inner sep=0.75pt]    {$1$};
\draw (135,202.4) node [anchor=north west][inner sep=0.75pt]    {$1$};
\draw (294,202.4) node [anchor=north west][inner sep=0.75pt]    {$1$};
\draw (214,202.4) node [anchor=north west][inner sep=0.75pt]    {$1$};
\draw (374,202.4) node [anchor=north west][inner sep=0.75pt]    {$1$};
\draw (175,162.4) node [anchor=north west][inner sep=0.75pt]    {$2$};

\end{tikzpicture}

\end{center}
Continuing to find the missing entry in the matrix given by the rhombi will give the following picture.

\begin{center}

\tikzset{every picture/.style={line width=0.75pt}} 

\begin{tikzpicture}[x=0.75pt,y=0.75pt,yscale=-1,xscale=1]

\draw   (260.61,71.01) -- (361.22,182) -- (160,182) -- cycle ;

\draw (94,163.4) node [anchor=north west][inner sep=0.75pt]    {$1$};
\draw (134,122.4) node [anchor=north west][inner sep=0.75pt]    {$1$};
\draw (174,82.4) node [anchor=north west][inner sep=0.75pt]    {$1$};
\draw (214,42.4) node [anchor=north west][inner sep=0.75pt]    {$1$};
\draw (135,202.4) node [anchor=north west][inner sep=0.75pt]    {$1$};
\draw (294,202.4) node [anchor=north west][inner sep=0.75pt]    {$1$};
\draw (214,202.4) node [anchor=north west][inner sep=0.75pt]    {$1$};
\draw (374,202.4) node [anchor=north west][inner sep=0.75pt]    {$1$};
\draw (175,162.4) node [anchor=north west][inner sep=0.75pt]    {$2$};
\draw (215,122.4) node [anchor=north west][inner sep=0.75pt]    {$3$};
\draw (254,82.4) node [anchor=north west][inner sep=0.75pt]    {$4$};
\draw (254,162.4) node [anchor=north west][inner sep=0.75pt]    {$2$};
\draw (295,122.4) node [anchor=north west][inner sep=0.75pt]    {$3$};
\draw (334,162.4) node [anchor=north west][inner sep=0.75pt]    {$2$};
\draw (295,43.4) node [anchor=north west][inner sep=0.75pt]    {$1$};
\draw (334,82.4) node [anchor=north west][inner sep=0.75pt]    {$1$};
\draw (374,122.4) node [anchor=north west][inner sep=0.75pt]    {$1$};
\draw (414,162.4) node [anchor=north west][inner sep=0.75pt]    {$1$};

\end{tikzpicture}

\end{center}

Inside the triangle is a depiction of the AR quiver for $Q$. Notice that each number provides the number of submodules of the corresponding module in the AR quiver: 

\begin{center}
\begin{tabular}{c}
\xymatrix{ & & {3\atop{2\atop 1}}\ar[dr] & & \\ & {2 \atop 1}\ar[ur] \ar[dr] & & {3\atop 2} \ar[dr] & \\ 1\ar[ur] & & 2\ar[ur] & &3 }
\end{tabular}
\end{center}
\end{exmp}

Note that in the statement of Theorem \ref{thm: what does the cluster character say finite case}, we restrict ourselves to the case in which $M$ has only finitely many submodules. This naturally raises the question, what does the cluster character of $M$ tell us when $M$ has infinitely many submodules? We proceed with an example.

\begin{exmp} \label{exmp: infinite submodules}
Let $Q$ be the quiver $1\leftarrow 2$. Then $\chi(S_2)|_1 = {x_1 + 1 \over x_2}\big|_1 = 2$. Consider the module $M = S_2 \oplus S_2$. Then $M$ has three types of submodules, namely $0, S_2,$ and $M$. There is only one submodule of the form $0$ and $M$; however, there are infinitely many of the form $S_2$. Given $(a,b) \in \Bbbk^2 - \{(0,0)\}$ we have an embedding $S_2 \hookrightarrow S_2\oplus S_2$ given by $x \mapsto (ax,bx)$. Note that scaling this embedding gives the same image. Therefore, we can realize the image of this embedding as the equivalence class of $(a,b) \in \Bbbk^2 - \{(0,0)\}$ up to scaling; that is, we can realize each submodule of $M$ of the form $S_2$ as an element of the projective line $\Bbbk P^1$. There are two methods to find the cluster character. 

\begin{enumerate}
\item We take $\Bbbk = \mathbb{C}$. Therefore the set of submodules of $M$ isomorphic to $S_2$ equals $\mathbb{C}P^1 \cong S^2$. We then set the cluster character $\chi(M,S_2)$ equal to the Euler characteristic of $\mathbb{C}P^1$. Recall the definition of Euler characteristic is $\chi(\mathbb{C}P^1) = \displaystyle \sum_k (-1)^k\text{dim}H_k(\mathbb{C}P^1) = 1 - 0 + 1$ where $H_k$ denotes the $k$th homology. There is a shortcut to compute this; namely, the Euler characteristic of a surface of genus $g$ is given by $\chi(\Sigma_g) = 2-2g$. In this case, our surface is of genus 0, so we attain the result. One thing to observe is that this number can be negative; however by Theorem \ref{Thm: cluster variables are Laurent Polynomials}, the coefficients in the Laurant polynomial are always positive. This provides us a restriction on the genus of this surface whenever the modules correspond to cluster characters!  

\item Let $\Bbbk = \mathbb{F}_q$ be the finite field with $q$ elements. The number of elements in $\Bbbk P^1$ is ${q^2-1\over q-1} = q+1$. One way to see this is through the fact that the points are given by $[1,a]$ for $a \in \Bbbk$ and the point at infinity $[0,1]$. By taking the field with one element, we get that $q = 1$ and that the number of elements in $\Bbbk P^1$ is 2. The reason why we get the same number as the Euler characteristic of $\mathbb{C}P^1$ follows from a deep theorem in number theory that is out of the scope of these notes.
\end{enumerate} 

\end{exmp}

As it turns out, the collection of submodules of fixed dimension of a given module has been thoroughly studied. 

\begin{defn}
Let $Q$ be a quiver and $M$ a $\Bbbk Q$-module. The space of all submodules of $M$ with dimension vector $\bm{e}$, denoted by Gr$(M,\bm{e})$, is called a \textbf{quiver grassmannian}.
\end{defn}

This is indeed a space, in fact, it is a projective variety. A special case is when $Q$ is just a point with no arrows. Then for the representation $M = \Bbbk ^n$, the quiver grassmannian Gr$(M,k)$ gives the ordinary grassmannian of $k$-planes in $n$-space. For more on quiver grassmannians, see Cerulli Irelli's lectures on quiver grassmannians [\ref{ref: CI quiver grassmannians}]. We have already seen another example of a quiver grassmannian:

\begin{exmp}
For the $M$ and $Q$ from the previous example, Gr$(M,(0,1)) \cong \Bbbk P^1$. Moreover, we've computed its Euler characteristic: $\chi(\text{Gr}(M,(0,1))) = 2$.
\end{exmp}

We can use the notion of quiver grassmannians to define the cluster character of any module $M$ as was done by Caldero and Chapoton in [\ref{ref: CC equation}] as follows.

\begin{thm}[Caldero-Chapoton] {\color{white} .}\label{thm: CC equation}\\
The cluster character of $M$ is given by 

$$\chi(M) = \displaystyle \sum_{\bm{e}} \chi(\text{Gr}(M,\bm{e}))x^{-g(\bm{e})}x^{-g(D(M/\bm{e}))}$$
where the sum is taken over all vectors $\bm{e}$ such that there is a submodule of $M$ of dimension vector $\bm{e}$.
\end{thm}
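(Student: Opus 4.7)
The plan is to start from the definition
\[
\chi(M) \;=\; \sum_{V \subset M} x^{-g(V)}x^{-g(D(M/V))}
\]
and reorganize the sum by grouping submodules according to their dimension vector. Since $\text{Gr}(M,\bm{e})$ is precisely the set of submodules of $M$ of dimension vector $\bm{e}$, one may first rewrite
\[
\chi(M) \;=\; \sum_{\bm{e}} \;\sum_{V \in \text{Gr}(M,\bm{e})} x^{-g(V)-g(D(M/V))}.
\]

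The crux of the argument is to show that the exponent in the inner sum depends only on $\bm{e}$ and not on the particular submodule $V \in \text{Gr}(M,\bm{e})$. This is where the hereditary hypothesis is essential: because $\Bbbk Q$ is hereditary, every module $V$ admits a projective presentation $0 \to P'_V \to P_V \to V \to 0$ of length two, so $g(V)=[P_V]-[P'_V]$ is defined without truncation. Letting $C$ denote the Cartan matrix whose $i$th column records $\dim P_i$, the additivity of dimension in short exact sequences yields $\dim V = C\cdot g(V)$ as column vectors, and since $C$ is invertible over $\mathbb{Q}$ I obtain $g(V)=C^{-1}\dim V$. Thus $g(V)$ is a function of the dimension vector alone. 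Applying the same reasoning to the hereditary algebra $\Bbbk Q^{op}$ shows that $g(D(M/V))$ is a function of $\dim(M/V) = \dim M - \bm{e}$, and hence is also determined by $\bm{e}$.

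With the exponent now a constant $x^{-g(\bm{e})-g(D(M-\bm{e}))}$ that factors out of the inner sum, the remaining task is to replace $\sum_{V \in \text{Gr}(M,\bm{e})} 1$ with $\chi(\text{Gr}(M,\bm{e}))$. When $M$ has finitely many submodules this is immediate: $\text{Gr}(M,\bm{e})$ is then a finite set of reduced points, each contributing $1$ to its Euler characteristic, so the honest point count equals $\chi(\text{Gr}(M,\bm{e}))$, and Theorem \ref{thm: what does the cluster character say finite case} is recovered upon specializing every $x_i=1$. For modules with infinitely many submodules the original definition is ill-posed as a plain sum, and the content of the Caldero--Chapoton theorem is precisely to promote the Euler-characteristic version to the honest definition, motivated by the $q\to 1$ philosophy of Example \ref{exmp: infinite submodules} relating $\#\mathbb{P}^1(\mathbb{F}_q)=q+1$ to $\chi(\mathbb{C}P^1)=2$.

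The main obstacle to rigor is the last passage from a point count to the Euler characteristic. Caldero and Chapoton handle this by stratifying $\text{Gr}(M,\bm{e})$ into locally closed subvarieties on which each $V$ has a fixed isomorphism class, using additivity of $\chi$ on constructible stratifications to integrate the (already constant) monomial over the stratification and recover $\chi(\text{Gr}(M,\bm{e}))$ times the monomial. In our hereditary setting the stratification plays no role in establishing constancy of the $g$-vector contribution, so only the motivic interpretation of $\chi$ remains as the genuine analytic ingredient; the algebraic heart of the identity is simply the remark $g(V)=C^{-1}\dim V$.
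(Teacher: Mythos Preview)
The paper does not actually prove this theorem; it is stated with attribution to Caldero and Chapoton and immediately followed by the remark that the formula is well-defined because $g(V)=C_{\Bbbk Q}^{-1}\dim V$ depends only on the dimension vector. Your argument reproduces exactly this well-definedness check, and your observation that in the infinite-submodule case the Euler-characteristic formula is promoted to the definition (rather than derived from the naive sum) is precisely how the paper treats it: the text says ``We can use the notion of quiver grassmannians to \emph{define} the cluster character of any module $M$ as was done by Caldero and Chapoton.'' So your proposal is correct and matches both the content and the spirit of the paper's treatment; there is nothing further to compare, since neither you nor the paper attempts the deeper result (that this formula actually yields cluster variables under mutation), which is the real theorem of Caldero--Chapoton.
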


At this point, this definition may not seem well defined because it stipulates that $g$ vectors depend only on the dimension vectors of modules and not the actual module itself. This is indeed the case since we can also define the $g$ vector of a $\Bbbk Q$- module $V$ by $g(V) := C^{-1}_{\Bbbk Q}\cdot\text{dim}V$ where $C_{\Bbbk Q}$ is the \textbf{Cartan matrix} of $Q$, which is defined as the matrix whose $i$th column is the dimension vector of the projective representation at vertex $i$. This gives the following lemma.

\begin{lem}
The $g$ vector $g(V)$ depends only on the dimension vector $\bm{e} = \text{dim}V$.
\end{lem}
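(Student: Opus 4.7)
The plan is to start from the minimal projective presentation that defines $g(V)$ and push it through the Cartan matrix to match the proposed closed form $g(V) = C_{\Bbbk Q}^{-1}\cdot \text{dim}\, V$. Since the formula on the right-hand side manifestly depends only on $\text{dim}\, V$, this equality is exactly what the lemma requires.

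First I would take a minimal projective presentation $0 \to P'_V \to P_V \to V \to 0$ and write $P_V = \bigoplus_i a_i P_i$ and $P'_V = \bigoplus_i b_i P_i$, so that by definition $g(V) = (a_1 - b_1, \ldots, a_n - b_n)$. Exactness gives the additivity of dimension vectors, $\text{dim}\, V = \text{dim}\, P_V - \text{dim}\, P'_V$. Next I would recall that the $i$th column of $C_{\Bbbk Q}$ is precisely $\text{dim}\, P_i$, so that $\text{dim}\, P_V = C_{\Bbbk Q}\, \vec{a}$ and $\text{dim}\, P'_V = C_{\Bbbk Q}\, \vec{b}$. Combining these identities yields
$$\text{dim}\, V \; = \; C_{\Bbbk Q}(\vec{a} - \vec{b}) \; = \; C_{\Bbbk Q}\cdot g(V).$$

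To conclude, I need $C_{\Bbbk Q}$ to be invertible so that we may write $g(V) = C_{\Bbbk Q}^{-1}\cdot \text{dim}\, V$. This is the only subtle point, but it is immediate from the standing acyclicity hypothesis on $Q$: order the vertices so that there is an arrow $i \to j$ only if $i > j$; then $P_i$ has a composition series built out of $S_j$ with $j \leq i$, so $(\text{dim}\, P_i)_j = 0$ for $j > i$ and $(\text{dim}\, P_i)_i = 1$. Hence $C_{\Bbbk Q}$ is lower triangular with $1$'s on the diagonal, in particular invertible over $\mathbb{Z}$. The resulting formula depends manifestly only on $\text{dim}\, V$, and (as a sanity check) shows that the vector $g(V)$ is independent of the choice of minimal projective presentation.

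The only real obstacle is verifying invertibility of $C_{\Bbbk Q}$, which as just noted is a one-line argument using that $Q$ has no oriented cycles; the rest of the proof is a direct translation between two descriptions of the same integer vector.
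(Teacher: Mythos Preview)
Your proof is correct and follows exactly the approach the paper has in mind: the paper simply asserts the alternative formula $g(V) = C_{\Bbbk Q}^{-1}\cdot\text{dim}\,V$ immediately before stating the lemma, without spelling out the verification, and you have supplied precisely the details (additivity of dimension vectors on the projective presentation, identification of $\text{dim}\,P_i$ with the $i$th column of $C_{\Bbbk Q}$, and invertibility of $C_{\Bbbk Q}$ via a triangular ordering of the vertices) that make this assertion into an honest argument.
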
 

Notice that in Example \ref{exmp: reconstructing the ar quiver using number of submodules}, we used the fact that $\chi(B_1 \oplus B_2) = \chi(B_1)\chi(B_2)$. We now have the necessary tools to provide a representation theoretic proof of this fact.

\begin{thm}\label{thm: cluster character of sum is product of characters}
For two $\Bbbk Q$ modules $A$ and $B$, the cluster character satisfies the following equation $\chi(A \oplus B) = \chi(A)\chi(B)$.
\end{thm}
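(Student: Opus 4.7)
The plan is to prove this by expanding both sides via the Caldero-Chapoton formula in Theorem \ref{thm: CC equation} and reducing the identity to two simpler facts: a multiplicativity of Euler characteristics of quiver Grassmannians over direct sums, and the linearity of the $g$-vector in the dimension vector (this latter being exactly the content of the lemma immediately preceding the statement). Writing out the formula,
$$\chi(A \oplus B) = \sum_{\bm{e}} \chi(\text{Gr}(A\oplus B,\bm{e}))\, x^{-g(\bm{e})}\, x^{-g(D((A\oplus B)/\bm{e}))},$$
the goal is to factor each term as something depending only on the ``$A$-part'' times something depending only on the ``$B$-part.''

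First I would handle the exponents of the $x_i$'s. Since $g(V) = C^{-1}_{\Bbbk Q} \cdot \dim V$ is linear in $\dim V$, if $\bm{e} = \bm{e}_1 + \bm{e}_2$ then $g(\bm{e}) = g(\bm{e}_1) + g(\bm{e}_2)$. Likewise, a submodule of $A\oplus B$ with dimension vector $\bm{e}$ has quotient of dimension vector $(\dim A - \bm{e}_1) + (\dim B - \bm{e}_2)$, and because $D$ preserves dimension vectors of underlying vector spaces, applying linearity again gives $g(D((A\oplus B)/\bm{e})) = g(D(A/\bm{e}_1)) + g(D(B/\bm{e}_2))$. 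Hence for any splitting $\bm{e} = \bm{e}_1+\bm{e}_2$,
$$x^{-g(\bm{e})}\, x^{-g(D((A\oplus B)/\bm{e}))} = \Bigl(x^{-g(\bm{e}_1)} x^{-g(D(A/\bm{e}_1))}\Bigr)\Bigl(x^{-g(\bm{e}_2)} x^{-g(D(B/\bm{e}_2))}\Bigr).$$

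Next I would invoke the key geometric identity
$$\chi(\text{Gr}(A\oplus B,\bm{e})) = \sum_{\bm{e}_1+\bm{e}_2 = \bm{e}} \chi(\text{Gr}(A,\bm{e}_1))\, \chi(\text{Gr}(B,\bm{e}_2)).$$
Combining this with the factorization of the monomial part and reindexing the sum by $(\bm{e}_1,\bm{e}_2)$ instead of $\bm{e}$, the double sum separates into a product of the two Caldero-Chapoton sums, giving exactly $\chi(A)\chi(B)$ as desired.

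The main obstacle is that geometric decomposition of the Euler characteristic of $\text{Gr}(A\oplus B,\bm{e})$. The standard route is to put a $\Bbbk^*$-action on $A\oplus B$ which acts trivially on $A$ and by scaling on $B$; this induces an action on $\text{Gr}(A\oplus B,\bm{e})$ whose fixed locus is precisely the disjoint union $\bigsqcup_{\bm{e}_1+\bm{e}_2=\bm{e}}\text{Gr}(A,\bm{e}_1)\times \text{Gr}(B,\bm{e}_2)$, since a $\Bbbk^*$-stable submodule must split as a sum of a submodule of $A$ and a submodule of $B$. The fact that the Euler characteristic (with compact support) of a variety with a $\Bbbk^*$-action equals that of its fixed locus then closes the argument. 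Establishing this localization property is the only non-formal input; the rest of the proof is bookkeeping with the formula and the additivity of $g$ in $\dim$.
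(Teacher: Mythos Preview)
Your proposal is correct and follows the same overall outline as the paper: reduce to the identity
\[
\chi(\text{Gr}(A\oplus B,\bm{e})) \;=\; \sum_{\bm{e}_1+\bm{e}_2=\bm{e}} \chi(\text{Gr}(A,\bm{e}_1))\,\chi(\text{Gr}(B,\bm{e}_2))
\]
on Euler characteristics of quiver Grassmannians, then use linearity of $g$ in the dimension vector to split the monomial factors. The difference lies in how that Grassmannian identity is established. The paper builds an explicit surjection $\phi(V) = (V\cap A,\, p_B(V))$ from $\text{Gr}(A\oplus B,\bm{e})$ onto $\bigsqcup_{\bm{e}_1+\bm{e}_2=\bm{e}} \text{Gr}(A,\bm{e}_1)\times\text{Gr}(B,\bm{e}_2)$, identifies each fiber $\phi^{-1}(X,Y)$ with the affine space $\text{Hom}(Y,A/X)$, and then uses that affine spaces have Euler characteristic $1$. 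You instead put a $\Bbbk^*$-action on $A\oplus B$ and invoke localization to the fixed locus. These are two faces of the same coin: the Bia\l ynicki--Birula attracting cells for your torus action are precisely the fibers of the paper's map $\phi$. Your route is cleaner once one grants the localization principle as a black box; the paper's is more explicit and self-contained, and also motivates the ``field with one element'' heuristic used elsewhere in the notes.
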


\begin{proof}[Sketch of Proof]
It suffices to prove this when we set the $x_i = 1$. We begin by providing a correspondence between submodules of $A \oplus B$ and pairs of submodules of the form $(X,Y)$ where $X \leq A$ and $Y\leq B$. Define $\psi((X,Y)) = X \oplus Y$. Then this is an injection into the collection of submodules of $A \oplus B$. Now, we have a split exact sequence $A\hookrightarrow A\oplus B \overset{p_B}{\twoheadrightarrow} B$ and given any submodule $V \leq A\oplus B$, we have another exact sequence $V \cap A \hookrightarrow V \twoheadrightarrow P_B(V)$. We define a map $\phi(V) = (V\cap A, p_B(V))$. Note that $\phi \circ \psi = 1$, so $\phi$ is surjective. 

Now $\phi$ is not necessarily injective; however, we will show that it is injective on the level of Euler characteristics. Consider the pair of submodules $(X,Y)$. Then $\phi^{-1}((X,Y)) = \{ V \subset A \oplus B : V \cap A = X \text{ and } p_B(V) = Y\}$. By the Noether isomorphism theorem, this set is in bijection with $\{W \subset A/X \oplus B : W \cap A/X = 0 \text{ and } p_b(W) = Y\}$ via the correspondence $V \mapsto V/X$. We can uniquely realize all submodules $W$ in this latter set as the graph of a function from $Y$ to $A/X$:

\begin{center}

\tikzset{every picture/.style={line width=0.75pt}} 

\begin{tikzpicture}[x=0.75pt,y=0.75pt,yscale=-1,xscale=1]

\draw    (330.22,96.01) -- (330.22,256.01) ;
\draw    (250.22,176.01) -- (410.22,176.01) ;
\draw    (330.22,176.01) -- (379.22,129.01) ;
\draw  [dash pattern={on 0.84pt off 2.51pt}]  (379.22,129.01) -- (379.22,176.01) ;
\draw [color={rgb, 255:red, 208; green, 2; blue, 27 }  ,draw opacity=1 ]   (330.22,176.01) -- (379.22,176.01) ;

\draw (349,120.4) node [anchor=north west][inner sep=0.75pt]    {$W$};
\draw (321,72.4) node [anchor=north west][inner sep=0.75pt]    {$\frac{A}{X}$};
\draw (417,170.4) node [anchor=north west][inner sep=0.75pt]    {$B$};
\draw (348,183.4) node [anchor=north west][inner sep=0.75pt]    {$Y$};
\draw (382,148) node [anchor=north west][inner sep=0.75pt]    {$p_B$};

\end{tikzpicture}

\end{center}

In particular, with a little more effort, we have that $\chi(\phi^{-1}(X,Y)) = \chi(\text{Hom}(Y,A/X))$. When we consider $\text{Hom}(Y,A/X)$ over a field with $q$ elements, $\chi(\text{Hom}(Y,A/X)) = q^l$ for some $l$. By taking $q = 1$, we have that $\chi(\text{Hom}(Y,A/X)) = 1$ and therefore, $\chi(\text{Gr}(A \oplus B, \bm{e})) = \displaystyle \sum_{\bm{e_1} + \bm{e_2} = \bm{e}} \chi(\text{Gr}(A,\bm{e_1}) \times \text{Gr}(B, \bm{e}_2)) = \displaystyle \sum_{\bm{e_1} + \bm{e_2} = \bm{e}} \chi(\text{Gr}(A,\bm{e_1}) \chi(\text{Gr}(B, \bm{e}_2))$. 

Finally, since $g$-vectors are linear; that is, $g(V) = g(X) + g(Y)$, we have that $g(D(A\oplus B)/X) = g(D(A/X)) + g(D(B/Y))$. By summing over all possible dimension vectors $\bm{e}$, we have the result.
\end{proof}

Recall from Section \ref{sec: Cluster Variables} that when we have an almost spit sequence $A \hookrightarrow B \twoheadrightarrow C$, the cluster characters satisfy the equation $\chi(A)\chi(C) = \chi(B) + 1$ where $\chi(B) = \chi(\oplus_i B_i) = \sum_i \chi(B_i)$. We now have the tools to prove this.

\begin{thm}\label{thm: computing cluster characters for projectives}
For an almost split sequence of $\Bbbk Q$-modules $A \overset{q}{\hookrightarrow} B \overset{p}{\twoheadrightarrow} C$, we have $$\chi(A\oplus C) = \chi(A)\chi(C) = \chi(B) + 1.$$
\end{thm}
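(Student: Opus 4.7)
The first equality is Theorem~\ref{thm: cluster character of sum is product of characters}, so the content lies in $\chi(A \oplus C) = \chi(B) + 1$. The plan is to compare the two sides term by term via the Caldero--Chapoton formula (Theorem~\ref{thm: CC equation}). Since $A \hookrightarrow B \twoheadrightarrow C$ is exact, $A \oplus C$ and $B$ share a dimension vector, so the unnamed lemma following Theorem~\ref{thm: CC equation} guarantees that, for each $\bm{e}$, the monomial $x^{-g(\bm{e})}x^{-g(D(M/\bm{e}))}$ is the same whether $M = A \oplus C$ or $M = B$. Thus it suffices to compare the Euler characteristics $\chi(\text{Gr}(A \oplus C, \bm{e}))$ and $\chi(\text{Gr}(B, \bm{e}))$ coefficient by coefficient.

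Both Grassmannians admit a stratification by the set-theoretic assignment $V \mapsto (V \cap A,\, p(V))$, using $p : B \twoheadrightarrow C$ in the second case and the canonical projection in the first. For fixed submodules $X \subseteq A$ and $Y \subseteq C$, the fiber over $(X, Y)$ is the set of splittings of the short exact sequence
$$0 \to A/X \to p^{-1}(Y)/X \to Y \to 0$$
obtained by pulling $A \hookrightarrow B \twoheadrightarrow C$ back along $Y \hookrightarrow C$ and pushing out along $A \twoheadrightarrow A/X$. Whenever nonempty, this fiber is a torsor over $\text{Hom}(Y, A/X)$, hence an affine space of Euler characteristic one. For $A \oplus C$, every pair $(X,Y)$ is realized, recovering the count used in the proof of Theorem~\ref{thm: cluster character of sum is product of characters}. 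For $B$, the almost split hypothesis is precisely what forces the displayed sequence to split on every pair except one: if $Y \neq C$ then $Y \hookrightarrow C$ is not a split epimorphism, so by the right almost split property it factors through $p$ and the pullback splits; if $Y = C$ and $X \neq 0$ then $A \twoheadrightarrow A/X$ is not a split monomorphism, so by the left almost split property it extends to $B$ and the pushout splits. The one exceptional pair $(X, Y) = (0, C)$ simply reproduces the original AR sequence, which is by hypothesis nonsplit and so contributes no submodule of $B$. Consequently $\chi(\text{Gr}(A \oplus C, \bm{e})) - \chi(\text{Gr}(B, \bm{e}))$ equals $1$ when $\bm{e} = \dim C$ and $0$ otherwise.

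Combining these steps gives $\chi(A \oplus C) - \chi(B) = x^{-g(C) - g(D(A))}$, and the theorem reduces to the monomial identity $g(C) + g(D(A)) = 0$. I expect this to be the main obstacle, as it is the one step where the almost splitness of the sequence is used globally (through $A = \tau C$) rather than only through the left and right almost split properties of its constituent maps. The plan is to compute each $g$-vector from a minimal projective presentation of $C$ over $\Bbbk Q$ and of $D(A)$ over $\Bbbk Q^{\mathrm{op}}$, and to invoke the Cartan formula $g(V) = C_{\Bbbk Q}^{-1} \cdot \dim V$ together with $C_{\Bbbk Q^{\mathrm{op}}} = C_{\Bbbk Q}^{T}$; this parallels the local computation carried out earlier in this section to recover the Fomin--Zelevinsky mutation formula for $\chi(S_k)$.
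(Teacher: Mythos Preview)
Your argument is essentially the paper's: both sketches stratify the submodules of $B$ via $V \mapsto (V\cap A,\,p(V))$, use the right almost split property of $p$ when $Y\neq C$ and the left almost split property of $q$ when $X\neq 0$, and isolate $(0,C)$ as the single missing pair. The paper's sketch simply writes ``with some more work, we conclude $\chi(B)=\chi(A)\chi(C)-1$'' at exactly the point where you pause; you have been more honest in naming what that work is.

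Your plan for the monomial identity is correct and the obstacle is smaller than you fear. Using the Cartan formula and $C_{\Bbbk Q^{\mathrm{op}}}=C_{\Bbbk Q}^{T}$ as you propose, one has $g(D A)=(C_{\Bbbk Q}^{T})^{-1}\dim A$. The missing input is the standard description of the Coxeter transformation on dimension vectors: for non-projective $C$ over a hereditary algebra,
\[
\dim \tau C \;=\; -\,C_{\Bbbk Q}^{T}\,C_{\Bbbk Q}^{-1}\,\dim C,
\]
which follows from applying the Nakayama functor to a minimal projective presentation of $C$ (see e.g.\ [\ref{ref: blue book}]). Since $A=\tau C$ in an almost split sequence, this gives
\[
g(DA)\;=\;(C_{\Bbbk Q}^{T})^{-1}\bigl(-C_{\Bbbk Q}^{T}C_{\Bbbk Q}^{-1}\dim C\bigr)\;=\;-\,C_{\Bbbk Q}^{-1}\dim C\;=\;-\,g(C),
\]
so the extra monomial is $x^{0}=1$ and the proof closes.
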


\begin{proof}[Sketch of Proof]
We proceed as in the proof of the previous theorem. Consider the collection of tuples $(X,Y)$ where $X \leq A$ and $Y\leq B$ are submodules, and let $V\leq B$ be a submodule. We define a map $\phi: \{\text{submodules of $B$}\} \rightarrow \{\text{pairs $(X,Y)$}\}$ by $\phi(V) = (V\cap A, p(V))$. Consider a tuple $(X,Y)$ such that $Y \neq 0$. Then since $p$ is irreducible and the inclusion of $Y$ into $C$ is not a retraction, we have the following commutative diagram.

\begin{center}
\begin{tabular}{c}
\xymatrix{X\ar^{q \circ i}[dr] \ar_i[d] & & Y \ar@{.>}_f[dl] \ar^i[d] \\ A\ar^q[r] & B\ar^p[r] & C}
\end{tabular}
\end{center} 

Define $V = q(X) + f(Y)$, so that $\phi(V) = (X,Y)$ and $(X,Y)$ is in the image of $\phi$. Now suppose that $X \neq 0$. Then since the quotient map is not a section, we have the following pushout diagram.

\begin{center}
\begin{tabular}{c}
\xymatrix{A \ar[d] \ar^q[r] & B \ar^p[r] \ar@{.>}_f[dl] \ar^{\pi}[d] & C  \ar^{\text{id}}[d] \\ A/X \ar[r] & A/X \oplus C \ar[r] & C}
\end{tabular}
\end{center} 

Define $V = \pi^{-1}((0,Y)) + X$, thus $\phi(V) = (X,Y)$ and we conclude that $(X,Y)$ is in the image of $\phi$. Therefore the map $\phi$ is onto $\{\text{pairs $(X,Y)$}\} - \{(0,C)\}$ since $C$ can't be lifted due to the irreducibilty of $p$. With some more work, we conclude that $\chi(B) = \chi(A)\chi(C) - 1$ where the minus one comes from the fact that the submodule $(0,C)$ can't be lifted.
\end{proof}

We have techniques to compute cluster characters of modules that lie in an almost split sequence; however, we do not have any techniques to compute cluster characters of modules that do not lie at the end of an almost split sequence. Although we know how to use the definition to compute cluster characters of any $\Bbbk Q$-module, we will now provide a theorem which will provide a technique to compute the cluster characters of projectives. 

\begin{thm}
Let $P_i$ denote the projective $\Bbbk Q$-module at vertex $i$. Then we have $$\chi(P_i) = \chi(\text{rad}P_i)x^{-g(DS_i)} + x_i^{-1}$$ where $S_i$ is the simple top of $P_i$.
\end{thm}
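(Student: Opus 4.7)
The plan is to apply the Caldero–Chapoton formula (Theorem~\ref{thm: CC equation}) to $P_i$, isolate the single term coming from the submodule $V = P_i$, and match what remains with $\chi(\text{rad}P_i)$ times a correction factor. The key structural input is that $P_i$ is a local module: since its top $P_i/\text{rad}P_i \cong S_i$ is simple, Nakayama's lemma forces every proper submodule of $P_i$ to lie inside $\text{rad}P_i$. Consequently the submodules of $P_i$ split into two disjoint classes, namely $V = P_i$ itself and the submodules of $\text{rad}P_i$, and on the level of quiver grassmannians we get $\text{Gr}(P_i,\bm{e}) = \text{Gr}(\text{rad}P_i,\bm{e})$ as varieties whenever $\bm{e} \neq \dim P_i$.

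The contribution of $V = P_i$ to $\chi(P_i)$ is $\chi(\text{Gr}(P_i,\dim P_i))\,x^{-g(P_i)}x^{-g(D(0))} = 1 \cdot x_i^{-1} \cdot 1 = x_i^{-1}$, since $P_i$ is the indecomposable projective at vertex $i$, so $g(P_i)$ is the $i$-th standard basis vector. This yields the constant term $x_i^{-1}$.

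For every other $V$, one uses the short exact sequence
$$0 \to \text{rad}P_i/V \to P_i/V \to S_i \to 0,$$
applies the duality $D$ to obtain a short exact sequence of $\Bbbk Q^{op}$-modules, and invokes the preceding lemma that $g$-vectors depend linearly on dimension vectors. This gives $g(D(P_i/V)) = g(D(\text{rad}P_i/V)) + g(DS_i)$, hence $x^{-g(D(P_i/V))} = x^{-g(D(\text{rad}P_i/V))}\cdot x^{-g(DS_i)}$. Summing over all $\bm{e}$ with $\bm{e} \neq \dim P_i$, one can pull $x^{-g(DS_i)}$ out of the sum and recognize the remainder as exactly $\chi(\text{rad}P_i)$ by Theorem~\ref{thm: CC equation} applied to $\text{rad}P_i$.

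The main obstacle is really just the initial structural observation that $P_i$ is local, which is what cleanly separates the single top-dimensional submodule from the rest and identifies the quiver grassmannians of $P_i$ in smaller dimension vectors with those of $\text{rad}P_i$; after that, everything is linear bookkeeping in $g$-vectors and an application of the Caldero–Chapoton formula to the smaller module $\text{rad}P_i$.
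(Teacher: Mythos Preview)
Your argument is correct. The paper does not actually prove this theorem: immediately after the statement it says ``We will not prove this, but instead provide an example,'' deferring to Caldero and Chapoton's original paper. So there is no proof in the text to compare against.

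That said, your proof is exactly the natural one and matches the spirit of how the paper handles the Caldero--Chapoton formula elsewhere. The two ingredients you use---that $P_i$ is local so proper submodules sit inside $\mathrm{rad}\,P_i$ (giving $\mathrm{Gr}(P_i,\bm{e}) = \mathrm{Gr}(\mathrm{rad}\,P_i,\bm{e})$ for $\bm{e}\neq\dim P_i$), and that $g$-vectors are linear in dimension vectors (the lemma $g(V) = C_{\Bbbk Q}^{-1}\cdot\dim V$ stated just before Theorem~\ref{thm: CC equation}, applied on the $\Bbbk Q^{\mathrm{op}}$ side)---are both available in the paper, and the rest is indeed bookkeeping. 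One small remark: when you invoke linearity to split $g(D(P_i/V))$, you are using that the $g$-vector formula $g = C^{-1}\cdot\dim$ also holds over $\Bbbk Q^{\mathrm{op}}$ with its own Cartan matrix; this is implicit in the paper's setup but worth saying out loud.
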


We will not prove this, but instead provide an example. As the reader may have guessed, the proof is given by Caldero and Chapoton in [\ref{ref: CC equation}].

\begin{exmp}
Let $Q$ be the quiver $1 \leftarrow 2 \leftarrow 3$ so that $Q^{op}$ is $1\rightarrow 2 \rightarrow 3$. We compute $\chi(P_3)$ where $P_3 = {3 \atop{2\atop1}}$, $\text{rad}P_3 = {2\atop1} = P_2$, and $\text{rad}P_2 = 1 = P_1 = S_1$. Thus $$\chi(P_3) = \chi(P_2)x^{-g(DS_3)} + x_3^{-1} = (\chi(S_1)x^{-g(DS_2)} + x_2^{-1})x^{-g(DS_3)} + x_3^{-1}.$$

We compute $$\chi(S_1) = x^{-g(0)}x^{-g(D(S_1))} + x^{-g(S_1)}x^{-g(0)} = x^{(-1,1,0)} + x^{(-1,0,0)} = {x_2 + 1 \over x_1}.$$

Now we need $\chi(P_2) = \chi(S_1)x^{-g(DS_2)} + x_2^{-1}$. We have $$\chi(P_2) = {x_2 + 1 \over x_1}x^{(0,-1,1)} + x_2^{-1} = {x_2x_3 + x_3 + x_1 \over x_1x_2}.$$

Finally, we compute $$\chi(P_3) = \chi(P_2)x^{-g(DS_3)} + x_3^{-1} = {x_2x_3 + x_3 + x_1 \over x_1x_2}x^{(0,0,-1)} + x_3^{-1} = {x_2x_3 + x_3 + x_1 + x_1x_2 \over x_1x_2}.$$
\end{exmp}

In the case in which the AR quiver of our algebra is connected, we can get the cluster character of injectives simply by completing the AR quiver. However, in the case in which the AR quiver is disconnected, we can't get the cluster character of injectives by beginning at projectives and completing meshes. For instance in the tame case, we would never leave the preprojective component. By duality, we have a dual theorem to Theorem \ref{thm: computing cluster characters for projectives} that provides us with a technique to compute the cluster character of injectives. 

\begin{thm}
Let $I_i$ denote the injective $\Bbbk Q$-module at vertex $i$. Then we have $\chi(I_i) = \chi(I_i/S_i)x^{-g(S_i)} + x_i^{-1}$.
\end{thm}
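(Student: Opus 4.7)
The plan is to mirror the proof sketch of the preceding theorem for $\chi(P_i)$, using the duality between indecomposable injectives over $\Bbbk Q$ and indecomposable projectives over $\Bbbk Q^{\op}$, together with the dual socle/quotient bookkeeping. The key structural input is that since $\Bbbk Q$ is hereditary and $I_i$ is indecomposable injective, its socle is the simple $S_i$, and therefore every nonzero submodule of $I_i$ contains $S_i$.

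The first step is to partition the sum defining $\chi(I_i)$ according to this socle condition, writing
$$\chi(I_i) \;=\; \chi(I_i,0) \;+\; \sum_{V \supseteq S_i} \chi(I_i,V).$$
For the $V=0$ term, by definition $\chi(I_i,0) = x^{-g(D(I_i))}$. Under $D$, the indecomposable injective $I_i$ over $\Bbbk Q$ becomes the indecomposable projective at vertex $i$ over $\Bbbk Q^{\op}$, so $g(D(I_i)) = e_i$ and this term equals $x_i^{-1}$. This accounts for the additive $x_i^{-1}$ in the formula.

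The second step is to establish the bijection $V \longleftrightarrow W := V/S_i$ between submodules of $I_i$ containing $S_i$ and arbitrary submodules of $I_i/S_i$. Invoking the preceding lemma that the $g$-vector depends only on the dimension vector (and hence is additive on short exact sequences), I get $g(V) = g(W) + g(S_i)$, and using $I_i/V \cong (I_i/S_i)/W$ I get $g(D(I_i/V)) = g(D((I_i/S_i)/W))$. Substituting into $\chi(I_i,V) = x^{-g(V)} x^{-g(D(I_i/V))}$ factors out a common $x^{-g(S_i)}$ and leaves precisely $\chi(I_i/S_i,W)$. Summing over $W \subseteq I_i/S_i$ identifies the bijected sum with $\chi(I_i/S_i)\, x^{-g(S_i)}$, and combining with the $V=0$ term yields the claimed identity.

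The main obstacle, and really the only nontrivial input, is the simple-socle statement for indecomposable injectives over a hereditary path algebra; once that is available, the rest is just the dualized radical/quotient accounting used in the projective case, so I would present it as a short sketch rather than a full calculation, in line with the conventions of the preceding section.
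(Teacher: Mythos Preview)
Your argument is correct and is exactly the explicit dualization the paper has in mind: the text does not give a proof here but simply says ``by duality, we have a dual theorem'' to the projective formula $\chi(P_i) = \chi(\operatorname{rad}P_i)\,x^{-g(DS_i)} + x_i^{-1}$ (which is itself stated without proof and attributed to Caldero--Chapoton). Your socle/quotient partition is precisely the dual of the top/radical partition one would use for $P_i$, so you are not taking a different route---you are just writing down what ``by duality'' means.
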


\begin{exmp}
Let's take $Q$ to be the Kronecker $1 \leftleftarrows 2$ and compute $\chi(I_1)  = \chi({2 \, 2 \atop 1})$. By the previous theorem, we compute $\chi(S_2 \oplus S_2)x^{-g(S_1)} + x_1^{-1}$. Note by Theorem \ref{thm: cluster character of sum is product of characters}, we have that $\chi(S_2 \oplus S_2) = \chi(S_2)\chi(S_2)$. We begin by computing $\chi(S_2) = {x_1^2 + 1 \over x_2}$. Since $g(S_1) = (1,0)$, we have that $$\chi(I_1) = \bigg({x_1^2 + 1 \over x_2}\bigg)\bigg({x_1^2 + 1 \over x_2}\bigg)x^{(-1,0)} + {1\over x_1} = {x_1^4 + 2x_1^2 + x_2^2 + 1 \over x_2^2x_1}.$$

Notice that this is a case analogous to Example \ref{exmp: infinite submodules} in which there are infinitely many submodules of $I_1$ of the form $S_2 \oplus S_2$. So in this case, the coefficient of $x_1^2$ in the cluster character provides the Euler characteristic of Gr$(I_1,(1,1))$.
\end{exmp}

\section{Clusters with Modules}\label{sec: clusters with modules}

\indent

Although not yet explicitly stated, thus far we have seen that the cluster character $\chi$ sends indecomposable rigid modules to cluster variables, where by \textbf{rigid} we mean Ext$^1(M,M) = 0$. The two questions we wish to answer in this section are 

\begin{itemize}
\item Which sets of modules are sent to clusters?
\item Given a rigid indecomposable module $M$, we get $\chi(M) = { \text{something} \over x^{\text{dim}M}}$. Which algebraic objects correspond cluster variables of the form $x_i$ and what object corresponds to the initial cluster? 
\end{itemize}

To do this, we need to introduce and establish some more algebraic machinery. For more on classical tilting theory, see [\ref{ref: blue book}].

\begin{defn}
Let $\Lambda = \Bbbk Q$ where $Q$ is a quiver with $n$ vertices and no oriented cycles. Then a \textbf{tilting module} is a rigid module $T$ with $n$ indecomposable non-isomorphic summands $T = T_1 \oplus T_2 \oplus \dots \oplus T_n$. 
\end{defn}

The condition that $T$ is rigid implies that Ext$^1(T,T) = \displaystyle \oplus_{i,j} $Ext$^1(T_i,T_j) = 0$. This happens if and only if each $T_i$ is rigid and they don't extend each other, that is, Ext$^1(T_i,T_j) = 0$ for all $i$ and $j$.

\begin{exmp} Some examples of tilting modules are the following.
\begin{itemize}
\item The sum of the projective modules $\Lambda = P_1 \oplus P_2 \oplus \dots \oplus P_n$.
\item The sum of the injective modules $\nu\Lambda = I_1 \oplus I_2 \oplus \dots \oplus I_n$ where $\nu$ is the Nakayama functor.
\item Let $Q$ be the quiver $1 \leftarrow 2$. Then AR quiver is as follows.
\begin{center}
\begin{tabular}{c}
\xymatrix{  & P_2 \ar[dr] \\ P_1 \ar[ur] & & I_2} 
\end{tabular}
\end{center}
Then there are 2 tilting $\Bbbk Q$-modules given by $S_1 \oplus P_2$ and $P_2 \oplus S_2$. Note that $S_1 \oplus S_2$ is not tilting since there is an extension of $S_2$ by $S_1$.

\item Let $Q$ be the quiver $1 \leftarrow 2 \leftarrow 3$. Then the AR quiver is as follows.

\begin{center}
\begin{tabular}{c}
\xymatrix{  & & P_3 \ar[dr] & & \\ & P_2 \ar[ur] \ar[dr]& & I_2\ar[dr] & \\ P_1 =S_1 \ar[ur] & & S_2\ar[ur] & & I_3 = S_3} 
\end{tabular}
\end{center}

To find all 5 tilting $\Bbbk Q$-modules, we must look for sets of three modules in the AR quiver that do not form a mesh, or almost split sequence. To do this, we can draw the \textbf{compatibility graph}, which is the graph in which there is an edge between any two indecomposable modules that form a rigid pair. This idea first originated as generalized associahedra (Stasheff polytopes) by Chapoton, Fomin, and Zelevinsky in [\ref{ref: CFZ original compatibility graph}]. Later in [\ref{ref: MRZ compatibility graph}], Marsh, Reineke, and Zelevinsky constructed these generalized associahedra using the representation theory of quivers and something called the category of decorated representations. Below is the compatibility graph for this example.

\begin{center}
\begin{tabular}{c}
\xymatrix{  & & S_2 \ar@{-}[dr] & & \\ & P_2 \ar@{-}[ur] \ar@{-}[dr]& & I_2\ar@{-}[ddr] & \\ & & P_3\ar@{-}[ur] \ar@{-}[uu]& & \\ S_1 \ar@{-}[uur] \ar@{-}[rrrr] \ar@{-}[urr] & &  & & S_3 \ar@{-}[ull]} 
\end{tabular}
\end{center}

The 5 tilting $\Bbbk Q$-modules are given by the triangles in the compatibility graph. We can re-write this using dimension vectors as follows: 

\begin{center}
\begin{tabular}{c}
\xymatrix{  & & (0,1,0) \ar@{-}[dr] & & \\ & (1,1,0) \ar@{-}[ur] \ar@{-}[dr]& & (0,1,1)\ar@{-}[ddr] & \\ & & (1,1,1)\ar@{-}[ur] \ar@{-}[uu]& & \\ (1,0,0) \ar@{-}[uur] \ar@{-}[rrrr] \ar@{-}[urr] & &  & & (0,0,1) \ar@{-}[ull]} 
\end{tabular}
\end{center}

Notice that any dimension vector that is a linear combination of any two other dimension vectors lies on the line connecting the two dimension vectors.

\end{itemize}
\end{exmp}

\subsection{Cluster Variables}

\indent

A cluster algebra whose initial quiver is of type $A_3$ has 9 cluster variables, 6 from the cluster character of the 6 indecomposable rigid modules and 3 from the initial cluster $(x_1,x_2,x_3)$. We can put all these cluster variables in an \textbf{extended compatibility graph} where we draw an edge between any two clusters variables that occur in a cluster.
\vspace{1cm}
\begin{center}
\scalebox{0.8}{%
\begin{tabular}{c}
\xymatrix{ x_3 \ar@{-}@/_9pc/[ddddrr] \ar@{-}@/^3pc/[rrrr]  \ar@{-}[rr] \ar@{-}[dr] \ar@{-}[ddd] & & \chi(S_2) \ar@{-}[dr] & & x_1 \ar@{-}@/^9pc/[ddddll] \ar@{-}[ll] \ar@{-}[dl] \ar@{-}[ddd]\\ & \chi(P_2) \ar@{-}[ur] \ar@{-}[dr]& & \chi(I_2)\ar@{-}[ddr] & \\ & & \chi(P_3)\ar@{-}[ur] \ar@{-}[uu]& & \\ \chi(S_1) \ar@{-}[uur] \ar@{-}[rrrr] \ar@{-}[urr] & &  & & \chi(S_3) \ar@{-}[ull] \\ & & x_2 \ar@{-}[ull] \ar@{-}[urr]& & } 
\end{tabular}}
\end{center}

Including the trivial outer triangle that corresponds to the initial cluster, there are 14 triangles in the compatibility graph. We can visualize cluster mutation as `wall crossing' in the compatibility graph and moreover, from the fact that the compatibility graph is a manifold, we see that we can get between any two clusters, or triangles, by a sequence of mutations. Consider the sequence of mutations of the initial cluster $(x_1,x_2,x_3) \overset{\mu_1}{\rightarrow} (\chi(x_1),x_2,x_3) \overset{\mu_3}{\rightarrow} (\chi(x_1),x_2,\chi(x_3))$. Then this is seen in the compatibility graph as the following sequence of `wall crossings' where the corresponding triangle has edges given by the cluster variables in the mutated cluster.

\begin{center}
\scalebox{0.8}{%
\begin{tabular}{c}
\xymatrix{ x_3 \ar@{-}@/_9pc/[dddddrr] \ar@{-}@/^3pc/[rrrr]  \ar@{-}[rr] \ar@{-}[dr] \ar@{-}[ddd] & & \chi(S_2) \ar@{-}[dr] & & x_1 \ar@{-}@/^9pc/[dddddll] \ar@{-}[ll] \ar@{-}[dl] \ar@{-}[ddd]\\ & \chi(P_2) \ar@{-}[ur] \ar@{-}[dr]& & \chi(I_2)\ar@{-}[ddr] & \\ & & \chi(P_3)\ar@{-}[ur] \ar@{-}[uu]& & \\ \chi(S_1) \ar@{-}[uur] \ar@{-}[rrrr] \ar@{-}[urr] & &   & & \chi(S_3) \ar@{-}[ull] \\ \cdot \ar@[blue]^{\mu_3}[rr] & & \cdot & & \\ & & x_2 \ar@{-}[uull] \ar@{-}[uurr]& & \\ \cdot \ar@[blue]^{\mu_1}[uu] & & & & } 
\end{tabular}}
\end{center}

\subsection{Support Tilting Modules and Shifted Projectives}

\indent 

We will now try to understand why there are 14 clusters, but only 5 tilting modules. To do this, we need to weaken the notion of a tilting module to a support tilting module which was introduced by Ingalls and Thomas in [\ref{ref: IT support tilting}].

\begin{defn}
A $\Bbbk Q$ module $T = T_1 \oplus T_2 \oplus \dots \oplus T_k$ where $k \leq n$ is called \textbf{support tilting} if the following hold.
\begin{enumerate}
\item $T$ is rigid.
\item The $T_i$ are non-isomorphic.
\item The support of $T$ has $k$ elements.
\end{enumerate}
\end{defn}

It is condition 3 that motivates the name support tilting because this condition stipulates that the module is tilting on its support. Recall that the support of a module is the set of vertices of the quiver at which the corresponding representation has a non-zero vector space. 

\begin{exmp}
Take $Q$ to be $1\leftarrow 2\leftarrow 3$. 
\begin{itemize}
\item $P_1 \oplus P_2$ is support tilting.
\item Any $S_i$ is support tilting.
\item $P_2 \oplus P_3$ is \textbf{not} support tilting. Note that $|$supp$(P_2 \oplus P_3)| = |\{1,2,3\}| = 3 \neq 2$.  
\end{itemize}
\end{exmp}

We will now introduce the algebraic objects that correspond to the initial cluster variables. A \textbf{shifted projective} $P_i[1]$ is an object with projective presentation $P_i \rightarrow 0 \rightarrow P_i[1]$. These are more naturally realized in the bounded derived category of mod-$\Lambda$ where the shift $[1]$ denotes the shift functor in $D^b(\text{mod-}\Lambda)$ and the presentation is a distinguished triangle in the category. After applying the Nakayama functor to the aforementioned projective presentation, we get $0\rightarrow I_i =\tau P_i[1] \rightarrow I_i \rightarrow 0$, so we conclude that $\tau(P_i[1]) = I_i$. For $Q : 1 \leftarrow 2\leftarrow 3$, the AR quiver with the shifted projectives is the following:

\begin{center}
\begin{tabular}{c}
\xymatrix{  & & P_3 \ar[dr] & &P_1[1] \ar[dr] & & \\ & P_2 \ar[ur] \ar[dr]& & I_2\ar[dr] \ar[ur] & & P_2[1] \ar[dr]& \\ S_1 \ar[ur] & & S_2\ar[ur] & & S_3\ar[ur] & & P_3[1] } 
\end{tabular}
\end{center}

Really what we are looking at here is the `cluster category' intoduced by Buan, Marsh, Reineke, Reiten, and Todorov in [\ref{ref: BMRRT categorification of cluster algebras}].

\subsection{Support Tilting Pairs and Extending the Cluster Character}

\indent 

The notion of support tilting (silting) pairs was first introduced in [\ref{ref: BPP silting pairs}] by Broomhead, Pauksztello and Ploog, though as we will see, the notion of silting objects is older.

\begin{defn}
A \textbf{silting pair} is a tuple of $\Bbbk Q$-modules $(T,P)$ where $T = T_1 \oplus T_2 \oplus \dots \oplus T_k$ is a support tilting module and $P = P_{j_1} \oplus P_{j_2} \oplus \dots \oplus P_{j_{n-k}}$ is a projective module with $n-k$ components whose simple tops $S_{j_1}, \dots , S_{j_{n-k}}$ at the vertices of $Q$ are \textit{not} in the support of $T$.
\end{defn}

Note the condition on the tops of the projective summands is equivalent to Hom$(P,T) = 0$. 

\begin{exmp}Taking $Q$ to be the linear $\mathbb{A}_3$ quiver from above, we have the following silting pairs.
\begin{itemize}
\item $(P_1 \oplus P_3, P_3)$ is a silting pair.
\item $(S_1, P_2 \oplus P_3)$ is a silting pair.
\end{itemize}
\end{exmp}

We are now ready to completely explain the compatibility diagram in terms of algebraic objects. The first part of the bijection in the next theorem was first proven by Buan, Marsh, Reineke, Reiten, and Todorov in [\ref{ref: BMRRT categorification of cluster algebras}]. They moreover showed that the tilting objects in the cluster category, which are formed by taking direct sums of indecomposables and shifted projectives, are in bijection with clusters. Adachi, Iyama, and Reiten in [\ref{ref: AIR tau-tilting}] showed that silting pairs $(T_1 \oplus T_2 \oplus \dots \oplus T_k,P_{j_1} \oplus P_{j_2} \oplus \dots \oplus P_{j_{n-k}})$ and the aforementioned tilting objects are in bijection via the map $(T,P) \mapsto  T_1 \oplus T_2 \oplus \dots \oplus T_k \oplus P_{j_1}[1] \oplus P_{j_2}[1] \oplus \dots \oplus P_{j_{n-k}}[1]$. This is one way to get the latter part of the below bijection.  

\begin{thm}
There is a bijection between the collections $\{$rigid indecomposable $\Bbbk Q$-modules $M$ and shifted projectives$\}$ and $\{$cluster variables in the corresponding cluster algebra$\}$. This bijection induces a bijection $\varphi : \{ \text{silting pairs} \} \rightarrow \{ \text{clusters} \}$ by $\varphi((T,P)) = (\chi(T_1), \chi(T_2),\dots, \chi(T_k), x_{j_1}, \dots , x_{j_{n-k}})$.
\end{thm}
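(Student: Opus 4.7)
The plan is to establish the object-level bijection first, and then build the induced bijection on silting pairs by composing with the Adachi--Iyama--Reiten parametrization and the BMRRT description of tilting objects in the cluster category. I will cite rather than reprove the deepest inputs.

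First I would define the map $\Phi$ on objects by $\Phi(M) := \chi(M)$ for each indecomposable rigid $\Bbbk Q$-module $M$, and $\Phi(P_i[1]) := x_i$ for each shifted projective. Well-definedness is automatic from the construction of $\chi$ in Section \ref{sec: cluster character}. For injectivity I would invoke Theorem \ref{thm: CC equation}: the denominator of $\chi(M)$ in lowest terms is $x^{\dim M}$, so non-isomorphic indecomposable rigid modules have distinguishable Laurent polynomials, while the $x_i$'s are isolated from these because they have trivial denominator. For surjectivity I would induct on the length of a mutation sequence from the initial seed: the base case $(x_1,\ldots,x_n)$ is covered by the shifted projectives, and the inductive step says that applying the Fomin--Zelevinsky mutation of Definition \ref{defn: mutation of seed} at position $k$ to an image $y_k = \chi(T_k)$ yields a new variable $y_k'$ which, by the Caldero--Chapoton formula (Theorem \ref{thm: CC equation}) applied to the exchange triangle replacing $T_k$, is again of the form $\chi(M')$ for some rigid indecomposable $M'$ or a shifted projective. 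This was already foreshadowed in Section \ref{sec: cluster character}, where the exchange formula of Theorem \ref{thm: computing cluster characters for projectives} was shown to match Fomin--Zelevinsky mutation in the simplest case.

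Next I would define $\varphi((T,P)) := (\chi(T_1),\ldots,\chi(T_k),x_{j_1},\ldots,x_{j_{n-k}})$ and argue that the output is indeed a cluster, not an arbitrary tuple of cluster variables. My approach is to decompose $\varphi$ as a composition of two established bijections: the Adachi--Iyama--Reiten bijection sending $(T,P)$ to the cluster-tilting object $T_1 \oplus \cdots \oplus T_k \oplus P_{j_1}[1] \oplus \cdots \oplus P_{j_{n-k}}[1]$ in the cluster category of $\Bbbk Q$, followed by the BMRRT bijection sending such cluster-tilting objects to clusters by applying $\Phi$ to each indecomposable summand. Because each factor is a bijection, $\varphi$ is as well. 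Alternatively one can give a direct inductive argument starting from the initial silting pair $(0,\Lambda)$, which $\varphi$ sends to the initial cluster $(x_1,\ldots,x_n)$, and then propagating along a mutation sequence.

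The main obstacle, and the place where I lean hardest on cited results, is the mutation-compatibility statement underlying both steps: mutation of a silting pair at its $i$-th summand, performed via an exchange triangle in the cluster category, must correspond under $\Phi$ to Fomin--Zelevinsky mutation at the $i$-th coordinate. This in turn requires extending the identity $\chi(A)\chi(C) = \chi(B) + 1$ of Theorem \ref{thm: computing cluster characters for projectives} from almost split sequences to general exchange triangles, and also handling the boundary behavior where the $i$-th summand crosses between a module and a shifted projective (so that an $x_i$ converts into a $\chi(M)$ or vice versa). Once this compatibility is granted, injectivity of $\varphi$ follows because a cluster determines its variables and the object-level map $\Phi$ is injective, and surjectivity follows by induction on mutation distance from the initial silting pair.
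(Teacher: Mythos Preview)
Your approach matches the paper's: it does not give a self-contained proof but attributes the object-level bijection to BMRRT and obtains the silting-pair bijection exactly as you do, by composing the Adachi--Iyama--Reiten correspondence (silting pairs $\leftrightarrow$ cluster-tilting objects) with the BMRRT correspondence (cluster-tilting objects $\leftrightarrow$ clusters). One small gap to close in your injectivity step: saying the denominator of $\chi(M)$ is $x^{\dim M}$ only distinguishes non-isomorphic rigid indecomposables once you know they have distinct dimension vectors, which is true over hereditary $\Bbbk Q$ (rigid modules have dense orbits, cf.\ Theorem~\ref{thm: rigidity is open}) but should be cited explicitly.
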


\begin{rem}
Recall in Example \ref{exmp: infinite submodules}, we mentioned that the Euler characteristic of the quiver grassmannian of any module can be computed using the shortcut $2-2g$ where $g$ is the genus of the quiver grassmannian. By the positivity conjecture, Theorem \ref{Thm: cluster variables are Laurent Polynomials}, this number is always nonnegative whenever $M$ corresponds to a cluster variable. The previous theorem allows us to conclude that the genus of the quiver grassmannian of a rigid module $M$ is at most 1.
\end{rem}

We can now define the \textbf{extended compatibility graph} in terms of representation theory, which was done by Buan, Marsh, Reineke, Reiten, and Todorov in [\ref{ref: BMRRT categorification of cluster algebras}], as follows. We connect $P_i[1]$ and $M$ with an edge if Hom$(P_i,M) = 0$, we connect $M$ and $N$ with an edge if Ext$(M,N) =$ Ext$(N,M) = 0$, and we connect $P_i[1]$ and $P_j[1]$ with an edge so long as $i \neq j$. The extended compatibility graph for the linear $\mathbb{A}_3$ quiver is as follows.

\vspace{1cm}
\begin{center}
\begin{tabular}{c}
\xymatrix{ P_3[1] \ar@{-}@/_9pc/[ddddrr] \ar@{-}@/^3pc/[rrrr]  \ar@{-}[rr] \ar@{-}[dr] \ar@{-}[ddd] & & S_2 \ar@{-}[dr] & & P_1[1] \ar@{-}@/^9pc/[ddddll] \ar@{-}[ll] \ar@{-}[dl] \ar@{-}[ddd]\\ & P_2 \ar@{-}[ur] \ar@{-}[dr]& & I_2 \ar@{-}[ddr] & \\ & & P_3 \ar@{-}[ur] \ar@{-}[uu]& & \\ S_1 \ar@{-}[uur] \ar@{-}[rrrr] \ar@{-}[urr] & &  & & S_3 \ar@{-}[ull] \\ & & P_2[1] \ar@{-}[ull] \ar@{-}[urr]& & } 
\end{tabular}
\end{center}

Notice that in the above extended compatibility graph, a nontrivial edge is only shared by at most two triangles. For instance if we consider the silting pair $(S_1 \oplus S_3,P_2)$, we are analyzing the triangle with vertices given by $S_1, S_3,$ and $P_2[1]$. We see that if we remove $P_2[1]$, there is only one other triangle that has two vertices given by $S_1$ and $S_3$, namely the triangle whose third vertex is $P_3$. This is an example of the following not algebraically obvious theorem proven by Buan, Marsh, Reineke, Reiten, and Todorov in [\ref{ref: BMRRT categorification of cluster algebras}].

\begin{thm}
Given a silting pair $(T,P)$ and one object in the cluster $T_i$ or $P_i[1]$, there is exactly one way to replace that object with another object such that the new collection of objects is a silting pair.
\end{thm}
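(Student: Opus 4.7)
The plan is to reduce the statement to the analogous ``two completions'' theorem for tilting objects in the cluster category, which is the natural cluster-categorical lift of Happel--Unger. Under the bijection
\[ (T,P) \longleftrightarrow \widetilde T := T_1 \oplus \cdots \oplus T_k \oplus P_{j_1}[1] \oplus \cdots \oplus P_{j_{n-k}}[1] \]
recalled in the paragraph above the theorem, a silting pair corresponds to a tilting object in the cluster category with exactly $n$ non-isomorphic indecomposable summands, and replacing one object $T_i$ or $P_{j_\ell}[1]$ in the silting pair corresponds exactly to replacing one indecomposable summand $X$ of $\widetilde T$.

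First, I would invoke the cluster-categorical analog of Happel--Unger, due to Buan, Marsh, Reineke, Reiten, and Todorov: every almost complete tilting object $\widetilde T / X$ in the cluster category admits exactly two non-isomorphic completions to a tilting object. Since $X$ is one such completion, there is a unique ``other'' completion $X^\ast \not\cong X$, and this will furnish the unique exchange. To justify the ``exactly two'' assertion in a more self-contained way, I would exhibit $X^\ast$ via a pair of exchange triangles
\[ X \to B \to X^\ast \to X[1], \qquad X^\ast \to B' \to X \to X^\ast[1], \]
where $B$ and $B'$ are minimal right and left $\mathrm{add}(\widetilde T / X)$-approximations of $X^\ast$ and $X$, respectively. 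Functorial finiteness of $\mathrm{add}(\widetilde T / X)$ gives existence of the approximations; a routine computation with long exact sequences of $\mathrm{Hom}$ then shows $\widetilde T / X \oplus X^\ast$ is rigid with $n$ non-isomorphic summands, and minimality of the approximations together with rigidity of $\widetilde T / X$ forces uniqueness.

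The last step is to translate $\widetilde T / X \oplus X^\ast$ back into a silting pair by inspecting the nature of $X^\ast$. If $X^\ast$ is a $\Bbbk Q$-module, it goes into the tilting slot and the support is updated accordingly; if $X^\ast$ is itself a shifted projective $P_\ell[1]$, it goes into the projective slot. The Hom-vanishing $\mathrm{Hom}(P', T')=0$ required by the definition of silting pair translates into Ext-vanishing in the cluster category between the shifted-projective summands and the module summands, so it is automatic from rigidity of the new tilting object.

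The main obstacle is producing $X^\ast$ and proving that the two exchange triangles above exhaust all possible completions; everything else is bookkeeping. For a more elementary treatment that avoids the cluster category, one could split into the two cases of the theorem: exchanging a module summand $T_i$ reduces, after restricting to the appropriate support, to the classical Happel--Unger theorem mentioned in the introduction, while exchanging a shifted projective $P_{j_\ell}[1]$ requires one to exhibit a unique rigid $\Bbbk Q$-module supported at $j_\ell$ and compatible with $(T, P/P_{j_\ell})$, using a Bongartz-type completion argument on the larger support algebra.
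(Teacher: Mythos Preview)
The paper does not actually prove this theorem: it states the result and attributes the proof to Buan, Marsh, Reineke, Reiten, and Todorov, so there is no in-paper argument to compare against. Your main approach---passing through the bijection $(T,P)\leftrightarrow \widetilde T$ to tilting objects in the cluster category and invoking the BMRRT ``exactly two completions'' theorem via exchange triangles and $\mathrm{add}(\widetilde T/X)$-approximations---is precisely the argument in the cited reference, and is correct in outline.

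Your proposed ``elementary'' alternative at the end, however, has real gaps. First, the classical Happel--Unger result quoted in the introduction gives \emph{at most} two completions of an almost complete tilting module, not exactly two; the whole point of passing to the cluster category is that there one always gets exactly two. Second, and more seriously, when you remove a module summand $T_i$ from a silting pair, the unique replacement need not be another module on the same (or any) support: it can be a shifted projective $P_\ell[1]$. So the case ``exchange $T_i$'' does not reduce cleanly to Happel--Unger on a support subalgebra, and you would need an additional argument to detect when the replacement jumps from the module slot to the projective slot. If you want an approach that stays in the module category and avoids the cluster category, the cleaner route is the $\tau$-tilting mutation of Adachi--Iyama--Reiten (also cited in the paper), which handles both cases uniformly.
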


\subsection{Stability Pictures}\label{sec: stability pictures}

\indent

The stability picture, more commonly known as the wall and chamber structure, is another method of studying quiver algebras. We will now define the stability picture and see how it is related to the extended cluster diagram. We first need the notion of stability conditions, which was first studied by King in [\ref{ref: K stability conditions}]. In [\ref{ref: B scattering diagrams}], Bridgeland used these stability conditions to construct a scattering diagram whose support is the so-called wall and chamber structure which we define below.

\begin{defn} 
For $\theta\in\mathbb{R}^n$, a non-zero module $M \in \text{mod}$-$\Bbbk Q$ is called \textbf{$\theta$-stable} if it is orthogonal to $\theta$, that is, $\theta\cdot M = \theta \cdot \text{dim}M = 0$, and $\theta \cdot L < 0 $ for every proper submodule $L$ of $M$. Moreover, a module $M$ orthogonal to $\theta$ is called \textbf{$\theta$-semistable} if $\theta \cdot L \leq 0$ for every submodule $L$ of $M$.
\end{defn}

We wish to study all the values of $\theta$ such that a fixed module $M$ is $\theta$-stable.

\begin{defn}
The stability space of a $\Bbbk Q$-module $M$ is $\mathcal{D}(M) = \{\theta \in \mathbb{R}^n :  M \, \text{is}\, \theta\text{-semistable}\}$. 
\end{defn}

When $\mathcal{D}(M)$ has codimension 1, we call it a \textbf{wall}. It is not the case that the stability space of every indecomposable module always gives a wall. Precisely which modules have walls as their stability space has been worked out by Treffinger in [\ref{ref: T which bricks correspond to $c$-vectors}].

\begin{defn}
Let $$\mathcal{R} = \mathbb{R}^n - \overline{\bigcup_{M\in \text{mod} \Bbbk Q} \mathcal{D}(M)}$$ denote the maximal open set of $\theta$ having no $\theta$-semistable non-zero modules. Then a connected component $\mathcal{C}$ of $\mathcal{R}$ is called a \textbf{chamber}.
\end{defn}

\begin{exmp}
Let $Q = 1\leftarrow 2$. Then we have three indecomposable modules $S_1, P_2,$ and $S_2$ with respective dimension vectors $(1,0), (1,1),$ and $(0,1)$. Given $\theta \in \mathbb{R}^2$, we have $(a,b)\cdot (1,0) = 0 \iff a = 0$, so $\mathcal{D}(S_1) = \{(a,b) : a = 0\}$. Similarly, $\mathcal{D}(S_2) = \{(a,b) : b = 0\}$. Finally, $(a,b)\cdot (1,1) = 0 \iff a = -b$. But note that since $S_1 \leq P_1$, we also require that  $(a,b)\cdot (1,0) \leq 0 \iff a \leq 0$. Therefore $\mathcal{D}(P_2) = \{(a,b) : a = -b \, \text{and} \, a \leq 0\}$. Below is a depiction of the wall and chamber structure, which we also call the \textbf{stability picture}. In this stability picture we also have the $g$-vectors of each indecomposable module and shifted projective. Note that the $g$-vector of the shifted projective $P_i[1]$ is $(0, \dots, 0, -1, 0, \dots, 0)$ where the $-1$ is in the $i$th postition, since any shifted projective has a projective presentation $P_i \rightarrow 0 \rightarrow P_i[1]$.

\begin{center}

\tikzset{every picture/.style={line width=0.75pt}} 

\begin{tikzpicture}[x=0.75pt,y=0.75pt,yscale=-1,xscale=1]

\draw    (318.72,18.01) -- (319.93,243.01) ;
\draw    (443.22,131.22) -- (194.22,131.22) ;
\draw    (320.28,131.22) -- (224.22,52.01) ;
\draw    (375.22,130.01) -- (375,131) ;
\draw [shift={(375,131)}, rotate = 102.55] [color={rgb, 255:red, 0; green, 0; blue, 0 }  ][fill={rgb, 255:red, 255; green, 0; blue, 0 }  ][line width=0.75]      (0, 0) circle [x radius= 3.35, y radius= 3.35]   ;
\draw    (319.22,78.01) -- (319.22,77.01) ;
\draw [shift={(319.22,77.01)}, rotate = 270] [color={rgb, 255:red, 0; green, 0; blue, 0 }  ][fill={rgb, 255:red, 255; green, 0; blue, 0 }  ][line width=0.75]      (0, 0) circle [x radius= 3.35, y radius= 3.35]   ;
\draw    (265.22,131.01) -- (264.22,131.01) ;
\draw [shift={(264.22,131.01)}, rotate = 180] [color={rgb, 255:red, 0; green, 0; blue, 0 }  ][fill={rgb, 255:red, 255; green, 0; blue, 0 }  ][line width=0.75]      (0, 0) circle [x radius= 3.35, y radius= 3.35]   ;
\draw    (320.22,186.01) ;
\draw [shift={(320.22,186.01)}, rotate = 0] [color={rgb, 255:red, 0; green, 0; blue, 0 }  ][fill={rgb, 255:red, 255; green, 0; blue, 0 }  ][line width=0.75]      (0, 0) circle [x radius= 3.35, y radius= 3.35]   ;
\draw    (280.22,97.01) -- (279.22,97.01) ;
\draw [shift={(279.22,97.01)}, rotate = 180] [color={rgb, 255:red, 0; green, 0; blue, 0 }  ][fill={rgb, 255:red, 255; green, 0; blue, 0 }  ][line width=0.75]      (0, 0) circle [x radius= 3.35, y radius= 3.35]   ;

\draw (264,15) node [anchor=north west][inner sep=0.75pt]   [align=left] {$\mathcal{D}(S_1)$};
\draw (325,227) node [anchor=north west][inner sep=0.75pt]   [align=left] {$\mathcal{D}(S_1)$};
\draw (183,135) node [anchor=north west][inner sep=0.75pt]   [align=left] {$\mathcal{D}(S_2)$};
\draw (401,113) node [anchor=north west][inner sep=0.75pt]   [align=left] {$\mathcal{D}(S_2)$};
\draw (212.67,45.7) node [anchor=north west][inner sep=0.75pt]  [rotate=-40.91] [align=left] {$\mathcal{D}(P_2)$};

\draw (359,138) node [anchor=north west][inner sep=0.75pt]  [color={rgb, 255:red, 255; green, 0; blue, 0  }  ,opacity=1 ] [align=left] {$\displaystyle g( P_{1})$};
\draw (327,65) node [anchor=north west][inner sep=0.75pt]  [color={rgb, 255:red, 255; green, 0; blue, 0 }  ,opacity=1 ] [align=left] {$\displaystyle g( P_{2})$};
\draw (326,176) node [anchor=north west][inner sep=0.75pt]  [color={rgb, 255:red, 255; green, 0; blue, 0 }  ,opacity=1 ] [align=left] {$\displaystyle g( P_{2}[ 1])$};
\draw (249,143) node [anchor=north west][inner sep=0.75pt]  [color={rgb, 255:red, 255; green, 0; blue, 0 }  ,opacity=1 ] [align=left] {$\displaystyle g( P_{1}[ 1])$};
\draw (283.41,69.28) node [anchor=north west][inner sep=0.75pt]  [color={rgb, 255:red, 255; green, 0; blue, 0  }  ,opacity=1 ,rotate=-39.84] [align=left] {$\displaystyle g( I_{2})$};

\end{tikzpicture}

\end{center}
\end{exmp}

Notice in the previous example that the $g$-vectors of all indecomposable modules and shifted projectives lie on a wall in the stability picture. This is indeed not a coincidence and was proven in more generality by Br\"{u}stle, Smith, and Treffinger in [\ref{ref: BT wall and chamber structure}].

\begin{thm}
The $g$-vectors of indecomposable rigid modules and shifted projectives lie on walls in the stability picture.
\end{thm}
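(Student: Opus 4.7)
The plan is to realize each such $g$-vector as an extremal ray of a chamber in the stability picture. Since every extremal ray of an $n$-dimensional simplicial cone lies on $n-1$ of its bounding facets, once those facets are identified as actual walls, $g(M)$ must sit on walls. So the argument splits into (a) completing $M$ to a silting pair, (b) identifying the chamber of a silting pair with the simplicial cone spanned by its summands' $g$-vectors, and (c) showing that the bounding facets of this cone are walls.

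First I would show that every rigid indecomposable $M$ is a summand of some silting pair $(T,P)$ via a Bongartz-type completion: rigidity of $M$ lets us extend it to a support tilting module on its support, and then adjoining the projectives at the remaining vertices produces the silting pair. A shifted projective $P_i[1]$ is already built into the data by taking any silting pair whose projective part contains $P_i$ (e.g.\ $(T,P) = (0,\Bbbk Q)$). Next I would check that the chamber associated to the silting pair $(T,P) = (T_1\oplus\cdots\oplus T_k,\ P_{j_1}\oplus\cdots\oplus P_{j_{n-k}})$ is the simplicial cone generated by the $g$-vectors $\{g(T_i)\}_i \cup \{g(P_{j_l}[1])\}_l$. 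The two ingredients are: (i) these $n$ vectors form a $\mathbb{Z}$-basis of $\mathbb{Z}^n$, which follows from the lemma $g(V) = C^{-1}_{\Bbbk Q}\cdot \dim V$ plus invertibility of the Cartan matrix; and (ii) the rigidity conditions $\operatorname{Ext}^1(T_i,T_j) = 0$ and $\operatorname{Hom}(P,T) = 0$ cut out the correct cone via the hereditary pairing $\langle g(M),\dim N\rangle = \dim\operatorname{Hom}(M,N) - \dim\operatorname{Ext}^1(M,N)$.

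Now I would identify each bounding facet of the chamber with a wall. Removing one summand from $(T,P)$ yields an almost-complete silting pair which, by the mutation theorem just stated, has exactly two completions; these produce adjacent chambers sharing a codimension-one face. On this face I would exhibit a brick $N$ — the brick label of the mutation, produced from the exchange triangle between the two completions — and verify that $\mathcal{D}(N)$ contains this face. The orthogonality $g(T_i)\cdot \dim N = 0$ for the non-mutated summands comes directly from the Euler-form pairing above, and the inequality $\theta\cdot \dim L \le 0$ for proper submodules $L\subset N$ uses that $N$ is a brick together with the structure of the exchange triangle. With the bounding facets now known to be walls, each extremal ray of the chamber sits on $n-1$ of them, so $g(M)$ lies on walls.

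The main obstacle is step three: producing the brick $N$ labeling each shared facet and promoting the easy orthogonality $g(M)\cdot\dim N = 0$ to full $\theta$-semistability of $N$ (i.e.\ the inequality on \emph{every} proper submodule, not just on $\dim N$ itself). This is precisely where one must use that silting mutations of hereditary algebras produce brick labels in a controlled way; the homological calculation for the orthogonality is short, but the submodule inequality is the genuinely representation-theoretic input. Once this is in hand, the theorem follows uniformly for both rigid indecomposables and shifted projectives, since the latter are just another flavor of summand of a silting pair.
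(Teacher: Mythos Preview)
The paper does not actually prove this theorem: it observes the phenomenon in the $A_2$ example, states the result, and cites Br\"ustle--Smith--Treffinger for the proof. So there is no in-text argument to compare your proposal against.

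That said, your outline is essentially the strategy of the cited reference, and it is sound. A few remarks on the details. In step~(b), claiming a $\mathbb{Z}$-basis is stronger than you need and does not follow from invertibility of $C_{\Bbbk Q}$ alone; for the present purpose $\mathbb{R}$-linear independence of the $g$-vectors suffices, and that is exactly the content of the corollary in Section~5 (``components $T_i$ of a rigid module have linearly independent $g$-vectors''). Your pairing identity $g(M)\cdot\dim N = \dim\operatorname{Hom}(M,N)-\dim\operatorname{Ext}^1(M,N)$ is correct and is the workhorse here; it drops out of the long exact sequence obtained by applying $\operatorname{Hom}(-,N)$ to the projective presentation of $M$.

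You have also correctly located the nontrivial step: producing, for each facet of the chamber, a brick $N$ whose stability wall $\mathcal{D}(N)$ contains that facet. The orthogonality $g(T_i)\cdot\dim N=0$ for the surviving summands is exactly the exceptional-sequence condition $\operatorname{Hom}(T_i,N)=0=\operatorname{Ext}^1(T_i,N)$ that the paper later invokes (Lemma~\ref{lem: Schofield} and the Nakanishi--Zelevinsky discussion in Section~6). The submodule inequality is where the real work lies; in the cited reference this is handled via the torsion-theoretic description of chambers, and in the hereditary case one can alternatively use that $N$ is the unique module completing $(T_2,\dots,T_n)$ to an exceptional sequence on the left. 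Note that in the paper's logical order, the theorems you are leaning on (chamber $\leftrightarrow$ silting pair bijection, two-completion for almost-complete silting pairs) are stated after the result in question and are themselves only cited, so your argument is not self-contained within the paper's exposition---but neither is the paper's, so this is not a defect of your proposal.
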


\begin{exmp}\label{exmp: stereographic projection}
Take $Q = 1\leftarrow 2\leftarrow 3$. Then a stereographic projection of the stability picture is depicted below. The $i$th coordinate is negative inside the wall of the $i$th simple, it is positive outside the wall, and it is zero on the wall. The vertices depicted are the $g$-vectors of the corresponding indecomposable modules and their shifted projectives. Notice that there are 14 chambers including the trivial outer chamber, each bounded by three $g$-vectors of three indecomposable modules or shifted projectives. This is no coincidence since the objects corresponding to the vertices of each chamber form a silting pair and hence a cluster. Notice that the stability picture is actually homeomorphic to the extended compatibility graph that we have already computed by straightening the walls that connect any two $g$-vectors.

\begin{center}

\tikzset{every picture/.style={line width=0.75pt}} 

\begin{tikzpicture}[x=0.75pt,y=0.75pt,yscale=-1,xscale=1]

\draw  [color={rgb, 255:red, 208; green, 2; blue, 27 }  ,draw opacity=1 ] (170.58,444.02) .. controls (107.45,407.73) and (86.65,325.47) .. (124.12,260.28) .. controls (161.59,195.09) and (243.14,171.66) .. (306.27,207.94) .. controls (369.4,244.23) and (390.2,326.49) .. (352.73,391.68) .. controls (315.26,456.87) and (233.71,480.3) .. (170.58,444.02) -- cycle ;
\draw  [color={rgb, 255:red, 80; green, 227; blue, 194 }  ,draw opacity=1 ] (246.56,308.75) .. controls (183.43,272.47) and (162.63,190.2) .. (200.1,125.01) .. controls (237.57,59.83) and (319.12,36.39) .. (382.25,72.68) .. controls (445.38,108.96) and (466.18,191.23) .. (428.71,256.41) .. controls (391.24,321.6) and (309.69,345.04) .. (246.56,308.75) -- cycle ;
\draw  [color={rgb, 255:red, 144; green, 19; blue, 254 }  ,draw opacity=1 ] (331.01,454.06) .. controls (267.88,417.78) and (247.08,335.52) .. (284.55,270.33) .. controls (322.02,205.14) and (403.57,181.71) .. (466.7,217.99) .. controls (529.83,254.28) and (550.63,336.54) .. (513.16,401.73) .. controls (475.69,466.92) and (394.14,490.35) .. (331.01,454.06) -- cycle ;
\draw    (182.11,205.56) ;
\draw [shift={(182.11,205.56)}, rotate = 0] [color={rgb, 255:red, 0; green, 0; blue, 0 }  ][fill={rgb, 255:red, 0; green, 0; blue, 0 }  ][line width=0.75]      (0, 0) circle [x radius= 3.35, y radius= 3.35]   ;
\draw    (328.79,224.14) -- (329.58,222.77) ;
\draw [shift={(328.79,224.14)}, rotate = 299.89] [color={rgb, 255:red, 0; green, 0; blue, 0 }  ][fill={rgb, 255:red, 0; green, 0; blue, 0 }  ][line width=0.75]      (0, 0) circle [x radius= 3.35, y radius= 3.35]   ;
\draw    (267.47,318.53) -- (266.14,317.76) ;
\draw [shift={(267.47,318.53)}, rotate = 209.89] [color={rgb, 255:red, 0; green, 0; blue, 0 }  ][fill={rgb, 255:red, 0; green, 0; blue, 0 }  ][line width=0.75]      (0, 0) circle [x radius= 3.35, y radius= 3.35]   ;
\draw    (370.29,311.9) -- (368.96,311.14) ;
\draw [shift={(370.29,311.9)}, rotate = 209.89] [color={rgb, 255:red, 0; green, 0; blue, 0 }  ][fill={rgb, 255:red, 0; green, 0; blue, 0 }  ][line width=0.75]      (0, 0) circle [x radius= 3.35, y radius= 3.35]   ;
\draw [color={rgb, 255:red, 248; green, 231; blue, 28 }  ,draw opacity=1 ]   (182.29,205.43) .. controls (226.76,90.41) and (437.99,148.9) .. (369.96,311.97) ;
\draw    (309.12,439) -- (309.7,437.99) ;
\draw [shift={(309.12,439)}, rotate = 299.89] [color={rgb, 255:red, 0; green, 0; blue, 0 }  ][fill={rgb, 255:red, 0; green, 0; blue, 0 }  ][line width=0.75]      (0, 0) circle [x radius= 3.35, y radius= 3.35]   ;
\draw    (445.79,208.28) ;
\draw [shift={(445.79,208.28)}, rotate = 0] [color={rgb, 255:red, 0; green, 0; blue, 0 }  ][fill={rgb, 255:red, 0; green, 0; blue, 0 }  ][line width=0.75]      (0, 0) circle [x radius= 3.35, y radius= 3.35]   ;
\draw    (424.43,264.28) ;
\draw [shift={(424.43,264.28)}, rotate = 0] [color={rgb, 255:red, 0; green, 0; blue, 0 }  ][fill={rgb, 255:red, 0; green, 0; blue, 0 }  ][line width=0.75]      (0, 0) circle [x radius= 3.35, y radius= 3.35]   ;
\draw    (373.09,205.32) .. controls (438.27,221.36) and (450.65,322.2) .. (360.94,372.38) ;
\draw    (373.09,205.32) ;
\draw [shift={(373.09,205.32)}, rotate = 0] [color={rgb, 255:red, 0; green, 0; blue, 0 }  ][fill={rgb, 255:red, 0; green, 0; blue, 0 }  ][line width=0.75]      (0, 0) circle [x radius= 3.35, y radius= 3.35]   ;
\draw    (360.94,372.38) ;
\draw [shift={(360.94,372.38)}, rotate = 0] [color={rgb, 255:red, 0; green, 0; blue, 0 }  ][fill={rgb, 255:red, 0; green, 0; blue, 0 }  ][line width=0.75]      (0, 0) circle [x radius= 3.35, y radius= 3.35]   ;
\draw [color={rgb, 255:red, 74; green, 144; blue, 226 }  ,draw opacity=1 ]   (267.97,317.61) .. controls (411.6,463.09) and (517.18,284.12) .. (446.81,208.86) ;

\draw (93.06,408.93) node [anchor=north west][inner sep=0.75pt]  [font=\footnotesize,color={rgb, 255:red, 208; green, 2; blue, 27 }  ,opacity=1 ]  {$\mathcal{D}( S_{1})$};
\draw (299.21,38.34) node [anchor=north west][inner sep=0.75pt]  [font=\footnotesize,color={rgb, 255:red, 80; green, 227; blue, 194 }  ,opacity=1 ]  {$\mathcal{D}( S_{2})$};
\draw (513.3,417.08) node [anchor=north west][inner sep=0.75pt]  [font=\footnotesize,color={rgb, 255:red, 144; green, 19; blue, 254 }  ,opacity=1 ]  {$\mathcal{D}( S_{3})$};
\draw (212.92,132.06) node [anchor=north west][inner sep=0.75pt]  [font=\footnotesize,color={rgb, 255:red, 248; green, 231; blue, 28 }  ,opacity=1 ]  {$\mathcal{D}( P_{2})$};
\draw (410.48,329.96) node [anchor=north west][inner sep=0.75pt]  [font=\footnotesize,rotate=-296.74]  {$\mathcal{D}( P_{3})$};
\draw (447.44,330.99) node [anchor=north west][inner sep=0.75pt]  [font=\footnotesize,color={rgb, 255:red, 74; green, 144; blue, 226 }  ,opacity=1 ]  {$\mathcal{D}( I_{2})$};
\draw (375.61,309.17) node [anchor=north west][inner sep=0.75pt]  [font=\scriptsize]  {$g( P_{3})$};
\draw (430.97,259.66) node [anchor=north west][inner sep=0.75pt]  [font=\scriptsize]  {$g( I_{2})$};
\draw (140.95,190.15) node [anchor=north west][inner sep=0.75pt]  [font=\scriptsize]  {$g( P_{3}[ 1])$};
\draw (361.2,383.76) node [anchor=north west][inner sep=0.75pt]  [font=\scriptsize]  {$g( S_{3})$};
\draw (316.56,195.69) node [anchor=north west][inner sep=0.75pt]  [font=\scriptsize]  {$g( P_{2})$};
\draw (236.34,322.9) node [anchor=north west][inner sep=0.75pt]  [font=\scriptsize]  {$g( P_{1})$};
\draw (287.74,456.95) node [anchor=north west][inner sep=0.75pt]  [font=\scriptsize]  {$g( P_{2}[ 1])$};
\draw (453.23,197.88) node [anchor=north west][inner sep=0.75pt]  [font=\scriptsize]  {$g( P_{1}[ 1])$};
\draw (371.54,185.06) node [anchor=north west][inner sep=0.75pt]  [font=\scriptsize]  {$g( S_{2})$};

\end{tikzpicture}

\end{center}

\end{exmp}

In general, there is a bijection between chambers in the stability picture and silting pairs proven by Br\"{u}stle, Smith, and Treffinger in [\ref{ref: BT wall and chamber structure}].

\begin{thm}
The chambers in the stability picture are in bijection with silting pairs, and hence clusters, where each chamber is sent to the direct sum of the modules/shifted projectives whose $g$-vectors enclose said chamber.
\end{thm}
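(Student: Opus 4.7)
The plan is to exhibit an explicit map $\Phi : \{\text{silting pairs}\} \to \{\text{chambers}\}$ sending a silting pair $(T,P) = (T_1\oplus\cdots\oplus T_k,\,P_{j_1}\oplus\cdots\oplus P_{j_{n-k}})$ to the interior of the simplicial cone
\[
\mathbb{R}_{>0}\,g(T_1) + \cdots + \mathbb{R}_{>0}\,g(T_k) + \mathbb{R}_{>0}\,g(P_{j_1}[1]) + \cdots + \mathbb{R}_{>0}\,g(P_{j_{n-k}}[1]) \subset \mathbb{R}^n,
\]
then check that $\Phi$ is well-defined (the cone really is a chamber) and bijective. The starting point is the trivial silting pair $(\Lambda,0)$, whose $g$-vectors are the standard basis $e_1,\dots,e_n$, and which should correspond to the positive orthant; dually, $(0,\Lambda)$ corresponds to the negative orthant. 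These two anchoring cases are immediate: on the positive orthant every non-zero $M$ has $\theta\cdot\dim M > 0$, so no wall meets it.

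The first task is to show the $n$ vectors spanning $\Phi(T,P)$ are linearly independent, so that the interior of the cone is $n$-dimensional. I would argue this by induction along the mutation graph of silting pairs, using the exchange theorem at the end of the silting-pair subsection: starting from $(\Lambda,0)$, mutation replaces one component and hence one $g$-vector, and the exchange relation provides an expression of the outgoing $g$-vector as a linear combination of the incoming one and the unchanged $n-1$ vectors with a non-zero coefficient, preserving independence. The second task is to show no wall $\mathcal{D}(M)$ meets the open cone. Writing $\theta = \sum a_i g(T_i) + \sum b_j g(P_{j}[1])$ with all $a_i, b_j > 0$, the Hom-vanishing condition $\mathrm{Hom}(P,T)=0$ built into the definition of a silting pair, together with rigidity of $T$, forces $\theta\cdot\dim M\neq 0$ for every indecomposable $M$ not appearing among the $T_i$, and $\theta\cdot\dim T_i > 0$ for each $T_i$; this rules out $\theta$-semistability.

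For bijectivity, the key is that $\Phi$ is equivariant for mutation. On the silting-pair side mutation is the exchange operation from the last theorem of Section~\ref{sec: clusters with modules}; on the chamber side it is wall-crossing through a codimension-one face of $\overline{\Phi(T,P)}$. Each facet of the cone is spanned by $n-1$ of the $g$-vectors, corresponding to dropping one summand, and the theorem that every $(n-1)$-element rigid partial silting pair has exactly two completions matches the fact that each wall separates exactly two chambers. Thus $\Phi$ sends the silting-pair exchange graph to the chamber adjacency graph, and since the silting exchange graph is connected (as is the mutation graph of clusters by earlier results) and $\Phi$ is defined on a connected starting point, a standard connectedness argument shows $\Phi$ is surjective. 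Injectivity follows because the open cone determines its extremal rays, hence the multiset of $g$-vectors, hence the silting pair via the already-established bijection between $g$-vectors and indecomposable rigid modules or shifted projectives.

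The main obstacle is the wall-avoidance step: proving that the open cone spanned by the $g$-vectors of a silting pair contains no $\theta$ for which some $M$ is semistable. A clean way to handle it is to translate the stability condition through the Cartan pairing, using $g(V) = C_{\Bbbk Q}^{-1}\cdot\dim V$ from the lemma preceding Theorem~\ref{thm: cluster character of sum is product of characters}, which converts $\theta\cdot\dim M$ into a pairing of $g$-vectors; then rigidity and the Hom-vanishing of the silting pair translate to the required sign conditions. Everything else — linear independence, matching of mutations, bijection — is essentially bookkeeping once this geometric fact is in hand.
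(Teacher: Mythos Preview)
The paper does not prove this theorem; it only states it and attributes the proof to Br\"ustle, Smith, and Treffinger. So there is no ``paper's own proof'' to compare against, and your outline should be judged on its own.

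Your overall architecture (map a silting pair to the open cone on its $g$-vectors, then verify chamberhood and bijectivity) is the right one, and linear independence is fine---indeed the paper later derives it from the rigidity-is-open argument in Section~\ref{sec: rigidity}, so you could simply cite that rather than induct along mutations. The wall-avoidance step, however, has a genuine gap. Your claim that rigidity of $T$ together with $\mathrm{Hom}(P,T)=0$ ``forces $\theta\cdot\dim M\neq 0$ for every indecomposable $M$ not appearing among the $T_i$'' is false: writing $g(T_i)\cdot\dim M=\dim\mathrm{Hom}(T_i,M)-\dim\mathrm{Ext}^1(T_i,M)$, there is nothing preventing cancellation for a general $M$, and in infinite type this certainly happens. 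What you actually need is weaker---that no non-zero $M$ is $\theta$-\emph{semistable}---and the standard route is through the torsion pair $(\mathrm{Fac}\,T,\,T^{\perp})$ attached to the silting pair: one shows every non-zero module either has a torsion submodule with strictly positive $\theta$-pairing or lies in the torsion-free class with strictly negative pairing, so semistability fails. Your Cartan-pairing translation is the right computation to set this up, but it does not by itself give the sign you assert.

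Surjectivity is also not established by your argument. Connectedness of the silting exchange graph tells you the image of $\Phi$ is contained in one connected piece of the chamber adjacency graph, but not that every chamber is hit; you would need the converse, that every wall-crossing on the chamber side is realized by a silting mutation. The actual argument (in the cited reference) shows that the $g$-vector cones form a simplicial fan whose support contains the complement of the wall set, so every chamber is one of these cones; this uses more than graph connectedness.
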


It follows from this theorem that two $g$-vectors that lie on a wall in the stability picture with no $g$-vector between them form a rigid pair. Thus the extended compatibility graph and the stability picture are homeomorphic for any finite dimensional hereditary algebra. Notice that the wall and chamber structure in the previous example triangulates the 2-sphere. This was proven true by Demonet, Iyama, and Jasso in [\ref{ref: DIJ triangulation of sphere}] for all algebras that have finitely many silting pairs. 

\begin{thm}
The compatibility graph and stability picture of a hereditary algebra with finitely many silting pairs form a triangulation of an $n-1$ sphere.
\end{thm}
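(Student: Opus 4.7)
The plan is to exhibit the wall-and-chamber structure as a complete simplicial fan in $\mathbb{R}^n$ and then descend to the unit sphere $S^{n-1}$ by the stereographic-projection trick illustrated in Example \ref{exmp: stereographic projection}. The key observation to get this off the ground is that each stability space $\mathcal{D}(M)$ is invariant under the scaling $\theta \mapsto t\theta$ for $t > 0$, since the defining conditions $\theta\cdot M = 0$ and $\theta \cdot L \le 0$ are homogeneous. Hence both the union of walls and each chamber is a cone, and the whole structure is determined by its intersection with $S^{n-1}$.

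Next, I would use the bijection between chambers and silting pairs established earlier in Section \ref{sec: stability pictures} to identify the vertices of each chamber (on the sphere) with the $g$-vectors of the $n$ indecomposable summands of the corresponding silting pair $(T,P) = (T_1 \oplus \cdots \oplus T_k, \, P_{j_1} \oplus \cdots \oplus P_{j_{n-k}})$. Invoking Dehy–Keller (or equivalently that the Cartan matrix is invertible, as used implicitly in the $g$-vector formula $g(V) = C_{\Bbbk Q}^{-1} \cdot \text{dim}\,V$), these $n$ $g$-vectors are linearly independent, so the closure of the chamber on $S^{n-1}$ is an $(n-1)$-simplex spanned by them.

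The pseudomanifold property would then come essentially for free from the mutation theorem: since each of the $n$ objects in a silting pair can be replaced in exactly one way to yield another silting pair, each codimension-one face of a chamber on $S^{n-1}$ is shared with exactly one other chamber. Combined with the hypothesis that there are only finitely many silting pairs, this gives a finite simplicial pseudomanifold sitting inside $S^{n-1}$, and the connectedness of this complex follows because mutation acts transitively on clusters.

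The main obstacle is upgrading \emph{pseudomanifold} to \emph{triangulation of the sphere}, which amounts to showing completeness: that the closed chambers cover all of $\mathbb{R}^n$ (equivalently all of $S^{n-1}$), with no leftover region. This is the crux of the Demonet–Iyama–Jasso result [\ref{ref: DIJ triangulation of sphere}], and is where the finiteness hypothesis is genuinely used — with infinitely many silting pairs, the $g$-vector fan can fail to be dense and the structure need not exhaust $S^{n-1}$. A clean way to finish, assuming completeness, is to note that a finite, complete, simplicial fan in $\mathbb{R}^n$ always induces a PL triangulation of $S^{n-1}$ by intersecting each cone with the unit sphere; together with the identification of this structure as the extended compatibility graph at the end of Example \ref{exmp: stereographic projection}, this yields the claim.
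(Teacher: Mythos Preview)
The paper does not actually prove this theorem: it is stated immediately after the chamber--silting bijection and attributed to Demonet--Iyama--Jasso [\ref{ref: DIJ triangulation of sphere}], with no argument given. So there is no ``paper's own proof'' to compare against.

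Your outline is a reasonable reconstruction of the standard strategy, and the ingredients you cite (homogeneity of the stability conditions, the chamber--silting bijection, linear independence of the $g$-vectors, and the two-way completion of almost-complete silting pairs) are exactly the ones the paper assembles in the surrounding sections. One small anachronism: the linear independence of the $g$-vectors of a rigid module is not established until Section~\ref{sec: rigidity}, after this theorem is stated, so within the paper's logical flow you would be forward-referencing.

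That said, your proposal is not a proof so much as a roadmap that correctly isolates the hard step and then defers it. You explicitly grant yourself completeness of the $g$-vector fan by invoking [\ref{ref: DIJ triangulation of sphere}], which is precisely the content of the theorem you are trying to prove. Everything prior to that point---that the chambers are simplicial cones with linearly independent extreme rays, that adjacent chambers meet along codimension-one faces via mutation, that the resulting complex is a connected pseudomanifold---is genuinely accessible from the paper's toolkit, but none of it forces the union of closed chambers to be all of $\mathbb{R}^n$. Without an independent argument for completeness (e.g.\ via $\tau$-tilting reduction or a density/openness argument specific to the finite case), the final sentence is circular. This is not a flaw relative to the paper, since the paper makes no pretense of proving it either; but you should be aware that what you have written is an outline that names the crux rather than resolves it.
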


\section{Rigidity}\label{sec: rigidity}

\indent 

One may question whether the stability picture in Example \ref{exmp: stereographic projection} is accurate; that is, whether there is nothing inside $\mathcal{D}(S_1)$ and outside both $\mathcal{D}(S_3)$ and $\mathcal{D}(S_2)$ for instance. The answer to this question is yes, the diagram is accurate and the reason for this is because `rigidity is an open condition'. We will spend this section proving this fact, but first, we begin with an example.  

\begin{exmp}
Take $\Bbbk = \mathbb{C}$ and $Q: 1\rightarrow 2$. Then the collection of all representations of $Q$ with dimension $(3,2)$ is $$\text{Rep}_{\mathbb{C}Q}(3,2) := \{ \mathbb{C}^3 \overset{f}{\rightarrow} \mathbb{C}^2 \} \cong \mathbb{C}^6.$$ The morphism $f$ is given by a $3\times 2$ matrix with entries in $\mathbb{C}$, say $\begin{bmatrix} a & c & e \\ b & d & f \end{bmatrix}$. The condition of $f$ being onto is equivalent to this matrix having full rank of 2, which is further equivalent to the non-vanishing of the minors: $ad-bc \neq 0$, $cf-de\neq 0$, and $af - be \neq 0$. Since these are polynomial inequalities, $\{f : f \, \text{is onto} \}$ is a Zariski open subset of $\text{Rep}_{\mathbb{C}Q}(3,2) \cong \mathbb{C}^6$. With the usual topology on $\mathbb{C}^6$, we see that $\{f : f \text{ is onto} \}$ is open, dense, and has full measure. \\

If $f$ is onto, then the corresponding representation decomposes:  $$\mathbb{C}^3 \twoheadrightarrow \mathbb{C}^2 \cong \mathbb{C}^2 \overset{\cong}{\rightarrow} \mathbb{C}^2 \oplus \mathbb{C} \rightarrow 0 = 2P_1 \oplus S_1.$$
Therefore all the $f$ in this open, dense, full measure set give the same rigid module, namely $2P_1 \oplus S_1$.
\end{exmp}

It is this idea that surjectivity of a morphism is a Zariski open condition, illuminated by the previous example, that will play a key role in showing that the stability picture in Example \ref{exmp: stereographic projection} is accurate. To continue, we must further develop the notion of 2-term silting complexes. 

\subsection{Category of 2-term Silting Complexes}

\indent

The notion of a silting object in a category was first used studied by Keller and Vossieck in [\ref{ref: KV intro silting}]. The notion of two-term silting complexes was first studied several years later by Hoshino, Kato, and Miyachi in [\ref{ref: HKM Two term silting complexes}]. Some time later, Adachi, Iyama, and Reiten in [\ref{ref: AIR tau-tilting}] noticed that the zero cohomologies of such complexes connect with cluster theory and more generally, $\tau$-tilting theory. For more on the topic, we suggest H\"{u}gel's survey [\ref{ref: H silting objects}]. \\

The \textbf{category of 2-term silting complexes} is a subcategory of the bounded derived category of $\Bbbk Q$. Its objects, called \textbf{2-term silting complexes}, are segments of chain complexes of the form $C_1\overset{P}{\rightarrow}C_0$ where $C_0$ and $C_1$ are both projective $\Bbbk Q$-modules. The morphisms are morphisms of chain complexes up to chain homotopy. More concretely, they are pairs of module morphisms $f_* = (f_1,f_0)$ up to equivalence such that the following diagram commutes:

\begin{center}
\begin{tabular}{c}
\xymatrix{ C_1 \ar_{p}[d] \ar^{f_1}[r] & D_1 \ar^{q}[d] \\  C_0 \ar_{f_0}[r] & D_0 }
\end{tabular}
\end{center}

Two morphisms $f_*$ and $g_*$ are equivalent, or chain homotopic denoted by $f_* \simeq g_*$, if there exists a module morphism $h:C_0 \rightarrow D_1$ such that $qh = g_0 - f_0$ and $hp = g_1 - f_1$. The following diagram may be useful.

\begin{center}
\begin{tabular}{c}
\xymatrix{ C_1 \ar_{p}[dd] \ar@<-.5ex>_{g_1}[rr] \ar@<.5ex>^{f_1}[rr] & & D_1 \ar^{q}[dd] \\ & & \\  C_0 \ar^h[uurr] \ar@<-.5ex>_{g_0}[rr] \ar@<.5ex>^{f_0}[rr] & & D_0 }
\end{tabular}
\end{center}
The morphism $h$ is a \textbf{chain homotopy}. The indecomposable objects of this category are completely described:

\begin{thm}
The indecomposable objects in the category of 2-term silting complexes are 
\begin{enumerate}
\item Projective presentations of indecomposable $\Bbbk Q$-modules $M$: $C_1 \overset{p}{\hookrightarrow} C_0$ where the cokernel of $p$ is $M$. 
\item Objects of the form $P \rightarrow 0$ where $P$ is an indecomposable projective, that is, the projective presentations of the shifted projectives.
\end{enumerate}
\end{thm}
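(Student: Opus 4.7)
The plan is to exhibit a canonical \emph{minimal form} for each 2-term complex of projectives (unique up to isomorphism in the homotopy category) and then read off its indecomposable summands from this form. The two key facts I would invoke are that trivial summands of the form $P \xrightarrow{\mathrm{id}} P$ are chain-homotopic to the zero complex and can therefore be split off, and that, since $\Bbbk Q$ is hereditary, every minimal projective presentation is actually a short exact sequence $0 \to P_1 \to P_0 \to M \to 0$ because the syzygy of any module is already projective.

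First I would show every object is homotopy-equivalent to one with no direct summand of the form $P \xrightarrow{\mathrm{id}} P$; call such a representative \emph{reduced}. Given a reduced representative $C_1 \xrightarrow{p} C_0$, I handle two cases. If $C_0 = 0$, then $C_1 = \bigoplus_i P_{j_i}$ decomposes into indecomposable projectives, each giving a summand of type (2). If $C_0 \neq 0$, then $p$ cannot be surjective, since otherwise the short exact sequence $0 \to \ker(p) \to C_1 \twoheadrightarrow C_0 \to 0$ would split (as $C_0$ is projective), producing a $P \xrightarrow{\mathrm{id}} P$ summand and contradicting reducedness. Hence $M := \mathrm{coker}(p) \neq 0$, and reducedness forces $C_0 \twoheadrightarrow M$ to be a projective cover and $C_1 \twoheadrightarrow \ker(C_0 \to M)$ to be a projective cover as well. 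By hereditarity the kernel is already projective, so $p$ is injective and $C_1 \xrightarrow{p} C_0$ is precisely a minimal projective presentation of $M$.

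Finally, I would lift a decomposition $M = M_1 \oplus \cdots \oplus M_s$ into indecomposables to a decomposition of the complex: pick minimal presentations $0 \to Q_{1,i} \to Q_{0,i} \to M_i \to 0$ for each summand, and use uniqueness of projective covers to produce compatible isomorphisms $C_0 \cong \bigoplus Q_{0,i}$ and $C_1 \cong \bigoplus Q_{1,i}$ that intertwine the differentials. Conversely, a complex of type (1) or (2) is indecomposable since any decomposition would induce one on its (indecomposable) cokernel $M$, or on the indecomposable projective $P$. The main obstacle is the lifting step: one must show that the idempotents realizing $M = \bigoplus M_i$ can be lifted to idempotent endomorphisms of the complex up to chain homotopy, which ultimately comes down to Krull--Schmidt for the Karoubian, semiperfect homotopy category of bounded complexes of projectives over $\Bbbk Q$.
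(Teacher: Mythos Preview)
Your overall strategy is sound, and the reduction to a representative with no $P \xrightarrow{\mathrm{id}} P$ summand is fine. The gap is in the sentence ``reducedness forces $C_0 \twoheadrightarrow M$ to be a projective cover and $C_1 \twoheadrightarrow \ker(C_0 \to M)$ to be a projective cover as well.'' The first half is true, but the second half is not: take $Q = 1 \leftarrow 2$ and the complex $P_2 \xrightarrow{\,0\,} P_1$. This has no identity summand (the map is zero and $P_1 \not\cong P_2$), yet $\ker(C_0 \to M) = \mathrm{im}(p) = 0$ while $C_1 = P_2 \neq 0$, so $C_1 \to \mathrm{im}(p)$ is certainly not a projective cover and $p$ is not injective. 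The reason your argument breaks is asymmetric: if $C_0 \to M$ fails to be a cover you can indeed split off an identity summand, but if $C_1 \to \mathrm{im}(p)$ fails to be a cover the extra summand you peel off has the form $Q' \to 0$, which is \emph{not} an identity summand and hence not excluded by your notion of ``reduced.''

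The paper's sketch bypasses this entirely and is worth comparing. Rather than trying to force $p$ to be injective via minimality, it simply factors $p$ as $C_1 \twoheadrightarrow \mathrm{im}(p) \hookrightarrow C_0$, observes that $\mathrm{im}(p)$ is projective because $\Bbbk Q$ is hereditary, and therefore the surjection $C_1 \twoheadrightarrow \mathrm{im}(p)$ \emph{splits}: $C_1 \cong \ker(p) \oplus \mathrm{im}(p)$. This immediately gives a decomposition of the complex as $(\ker(p) \to 0) \oplus (\mathrm{im}(p) \hookrightarrow C_0)$, with the second summand now genuinely a monomorphism. So the key use of hereditarity is not ``the kernel is projective, hence $p$ is injective'' (which does not follow), but rather ``the image is projective, hence the kernel of $p$ splits off as a complex of type~(2).'' Once you make that replacement, the rest of your outline (decomposing $\ker(p)$ into indecomposable projectives, decomposing $M$ and lifting via Krull--Schmidt for $K^b(\mathrm{proj}\,\Bbbk Q)$) goes through.
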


\begin{proof}[Sketch of proof]
Let $C_1 \overset{p}{\rightarrow} C_0$ be a 2-term silting complex. Then we have a commutative diagram:

\begin{center}
\begin{tabular}{c}
\xymatrix{ C_1 \ar^{p}[rr] \ar@{->>}[dr] & & C_0  \\  & \text{im}(p) \ar@{^{(}->}[ur]
 & }
\end{tabular}
\end{center}
Since $\Bbbk Q$ is hereditary, we have that im$(p)$ is a projective module, so we have a projective presentation of $M = \text{coker}(p)$ given by $0 \rightarrow \text{im}(p) \rightarrow C_0 \rightarrow M \rightarrow 0$. Moreover, the exact sequence $0 \rightarrow \text{ker}(p) \rightarrow C_1 \rightarrow \text{im}(p) \rightarrow 0$ splits by the projectivity of im$(p)$. Therefore, we can decompose the original 2-term silting complex into the direct sum of projective presentations of a shifted projective and $M$ respectively:

$$ C_1 \overset{p}{\rightarrow} C_0 \cong \text{ker}(p) \rightarrow 0 \oplus \text{im}(p) \rightarrow C_0.$$ \end{proof}

We will now recall some facts from homological algebra and category theory.

\begin{lem}
The projective presentation of a $\Bbbk Q$-module $M$ is unique up to chain homotopy.
\end{lem}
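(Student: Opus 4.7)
The plan is to apply the standard comparison theorem for projective resolutions in the two-term setting. Given two projective presentations $P_\bullet = (P_1 \xrightarrow{p} P_0 \xrightarrow{\pi} M \to 0)$ and $Q_\bullet = (Q_1 \xrightarrow{q} Q_0 \xrightarrow{\rho} M \to 0)$, I would build mutually inverse chain-homotopy equivalences $f_\bullet : P_\bullet \to Q_\bullet$ and $g_\bullet : Q_\bullet \to P_\bullet$ lifting $\operatorname{id}_M$. By projectivity of $P_0$, lift $\pi$ through $\rho$ to get $f_0 : P_0 \to Q_0$ with $\rho f_0 = \pi$; then $\rho(f_0 p) = \pi p = 0$, so $f_0 p$ factors through $\ker \rho = \operatorname{im}(q)$, and projectivity of $P_1$ yields $f_1 : P_1 \to Q_1$ with $q f_1 = f_0 p$. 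Build $g_\bullet$ symmetrically.

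Next, I would show $g_\bullet f_\bullet \simeq \operatorname{id}_{P_\bullet}$. Set $\phi_\bullet := g_\bullet f_\bullet - \operatorname{id}_{P_\bullet}$; since this chain map lifts the zero map on $M$, we have $\pi \phi_0 = 0$, so $\phi_0$ factors through $\ker \pi = \operatorname{im}(p)$. Projectivity of $P_0$ together with the surjection $P_1 \twoheadrightarrow \operatorname{im}(p)$ induced by $p$ produces $h : P_0 \to P_1$ with $p h = \phi_0$, establishing the first chain-homotopy equation. For the second equation, the chain-map property of $\phi_\bullet$ gives $p(hp) = (ph)p = \phi_0 p = p \phi_1$, which forces $hp = \phi_1$ as soon as $p$ is injective. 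The symmetric construction then handles $f_\bullet g_\bullet \simeq \operatorname{id}_{Q_\bullet}$.

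The main obstacle is the cancellation step in the previous paragraph: for a general projective presentation the differential $p$ need not be injective, so $p(hp) = p\phi_1$ only yields $hp - \phi_1 \in \ker p$ rather than equality. The remedy is to invoke the decomposition
\[
C_1 \xrightarrow{p} C_0 \;\cong\; \bigl(\ker p \to 0\bigr) \,\oplus\, \bigl(\operatorname{im}(p) \hookrightarrow C_0\bigr)
\]
from the preceding theorem, which uses hereditariness of $\Bbbk Q$ to split $\ker p$ off as a direct summand of $P_1$. On the shifted-projective summand $(\ker p \to 0)$ any two choices of splitting agree up to isomorphism by Krull--Schmidt for projective $\Bbbk Q$-modules, while on the complementary summand the differential is injective and the argument above produces the required chain-homotopy equivalence. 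Combining these components yields the desired homotopy equivalence $P_\bullet \simeq Q_\bullet$.
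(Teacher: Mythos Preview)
The paper does not prove this lemma; it is simply recalled as a standard fact from homological algebra. Your main argument (the first two paragraphs) is exactly the standard comparison-theorem proof, and it is correct once $p$ and $q$ are injective. That is the intended setting here: in the paper's usage (see the hook arrow $C_1 \hookrightarrow C_0$ in the preceding theorem, and the short exact sequence $0 \to P'_M \to P_M \to M \to 0$ in Section~\ref{sec: cluster character}), a ``projective presentation'' of $M$ over the hereditary algebra $\Bbbk Q$ means a projective \emph{resolution}, so the differential is a monomorphism and your cancellation step $p(hp)=p\phi_1 \Rightarrow hp=\phi_1$ goes through directly. No remedy is needed.

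Your third paragraph, however, contains a genuine error rather than a fix. The statement is simply \emph{false} for arbitrary two-term complexes $P_1 \to P_0$ with cokernel $M$: the homology $H_1(P_\bullet)=\ker p$ is a chain-homotopy invariant, so two presentations with $\ker p \not\cong \ker q$ (for instance a minimal resolution of $M$ versus the same resolution padded by a summand $P \xrightarrow{0} 0$) can never be homotopy equivalent. Your Krull--Schmidt argument only shows that the decomposition of a \emph{single} complex into its shifted-projective part and its injective-differential part is well defined up to isomorphism; it does not, and cannot, force $\ker p \cong \ker q$ across two different presentations of the same $M$. So the proposed remedy does not rescue the general statement --- the correct resolution of your obstacle is simply to observe that, in the hereditary context the paper is working in, the obstacle does not arise.
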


\begin{thm}
Let $C_1 \rightarrow C_0$ and $D_1 \rightarrow D_0$ be two projective presentations of $M$ and $N$ respectively. Then Ext$^1(M,N)$ is the set of homotopy classes of maps $h:C_1 \rightarrow D_0$.
\end{thm}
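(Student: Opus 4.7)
The plan is to exhibit a direct bijection between homotopy classes of maps $h \colon C_1 \to D_0$ and elements of $\operatorname{Ext}^1(M,N)$, essentially unpacking the derived-categorical identity $\operatorname{Ext}^1(M,N) = \operatorname{Hom}_{D^b}(M, N[1])$ at the level of projective resolutions. Writing $p \colon C_1 \to C_0$ and $q \colon D_1 \to D_0$ for the two differentials, and $\pi_M \colon C_0 \twoheadrightarrow M$, $\pi_N \colon D_0 \twoheadrightarrow N$ for the canonical projections, the intended equivalence relation on maps $C_1 \to D_0$ should read: $h \sim h'$ if and only if $h - h' = \alpha \circ p + q \circ \beta$ for some $\alpha \colon C_0 \to D_0$ and $\beta \colon C_1 \to D_1$. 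This is precisely the null-homotopy condition for $h - h'$ viewed as a chain map $C_\bullet \to D_\bullet[1]$.

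First I would invoke the hereditary hypothesis to promote each projective presentation into an honest short exact sequence $0 \to C_1 \overset{p}{\to} C_0 \overset{\pi_M}{\to} M \to 0$ and $0 \to D_1 \overset{q}{\to} D_0 \overset{\pi_N}{\to} N \to 0$; this makes $C_\bullet$ a projective resolution of $M$ and yields the standard formula $\operatorname{Ext}^1(M,N) = \operatorname{Hom}(C_1, N)/p^*\operatorname{Hom}(C_0, N)$. Next, define $\Phi([h]) := [\pi_N \circ h]$ and check well-definedness on equivalence classes: the $\alpha$-summand contributes $(\pi_N \circ \alpha) \circ p \in \operatorname{im}(p^*)$, and the $\beta$-summand vanishes because $\pi_N \circ q = 0$. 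Surjectivity of $\Phi$ is immediate by projectivity of $C_1$: any $\gamma \colon C_1 \to N$ lifts through $\pi_N$ to some $h$ with $\pi_N \circ h = \gamma$, so $\Phi([h]) = [\gamma]$. For injectivity, if $\pi_N \circ h = \pi_N \circ \alpha \circ p$ for some $\alpha$, then $h - \alpha p$ takes values in $\ker \pi_N = \operatorname{im}(q)$; since $q$ is injective (again by hereditarity, submodules of projectives being projective), this factorization yields a unique $\beta \colon C_1 \to D_1$ with $q \circ \beta = h - \alpha p$, establishing $h \sim 0$.

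The only genuine subtlety is the correct interpretation of ``homotopy class'': the single map $h \colon C_1 \to D_0$ is not itself a chain map between $C_\bullet$ and $D_\bullet$ in the sense defined in the previous paragraph of the paper, but rather encodes a chain map into the shifted complex $D_\bullet[1]$, so ``homotopy'' must be unpacked accordingly. Once this is articulated, the proof is a short diagram chase in which the hereditary hypothesis enters at precisely two points: making $C_\bullet$ into an actual resolution, and guaranteeing that $q$ is injective so that the lift to $D_1$ in the injectivity argument exists. In a non-hereditary setting this last step fails, which is consistent with needing honest derived-category machinery beyond the hereditary case.
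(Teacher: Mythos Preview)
Your proof is correct, and in fact it does substantially more than the paper does. The paper does not give a proof of this theorem at all: immediately after the statement it simply remarks that the map $h$ is a chain map $C_\bullet \to D_\bullet[1]$, observes that one is ``really'' looking at $\operatorname{Hom}(M,N[1])$ in the derived category, declares this to be ``well known to be $\operatorname{Ext}(M,N)$'', and extracts from the diagram the concrete description of $\operatorname{Ext}^1(M,N)$ as the cokernel of $\operatorname{Hom}(C_0,D_0)\oplus\operatorname{Hom}(C_1,D_1)\to\operatorname{Hom}(C_1,D_0)$, $(f,g)\mapsto f\circ p+q\circ g$. Your argument, by contrast, unpacks this derived-category identity elementarily: you compute $\operatorname{Ext}^1$ from the resolution $C_\bullet$ and build the explicit bijection $\Phi([h])=[\pi_N\circ h]$, checking well-definedness, surjectivity, and injectivity by hand. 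What your approach buys is a self-contained proof that never leaves the module category; what the paper's one-line appeal buys is brevity, at the cost of assuming the reader already knows the derived-category dictionary.

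Two very minor points of tidying. First, your sentence ``invoke the hereditary hypothesis to promote each projective presentation into an honest short exact sequence'' is slightly imprecise: hereditarity does not force an arbitrary presentation $C_1\to C_0$ to be injective, but it does make $\ker\pi_M\subset C_0$ projective, so $C_\bullet$ is homotopy equivalent to the resolution $\ker\pi_M\hookrightarrow C_0$; since everything is homotopy-invariant you may indeed assume $p,q$ injective without loss of generality. Second, in the injectivity step you start from ``$\pi_N\circ h=\pi_N\circ\alpha\circ p$ for some $\alpha$'', but what $\Phi([h])=0$ actually gives is $\pi_N\circ h=\gamma\circ p$ for some $\gamma\colon C_0\to N$; one first lifts $\gamma$ through $\pi_N$ using projectivity of $C_0$ to obtain your $\alpha$. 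Both are routine and do not affect the correctness of the argument.
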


What this theorem is saying is that Ext$(M,N)$ is equivalent to the set of all $h$ up to homotopy of the form 

\begin{center}
\begin{tabular}{c}
\xymatrix{ 0 \ar[dd] \ar[rr] & & D_1 \ar^q[dd] \\ & & \\ C_1\ar_{p}[dd] \ar^{g}[uurr] \ar^{h}[rr] & & D_0 \ar[dd] \\ & & \\ C_0 \ar^f[uurr] \ar[rr] & & 0 }
\end{tabular}
\end{center}

In this diagram, we have shifted the projective presentation of $N$ by one. Really what we are looking at here is Hom$(M,N[1])$ in the derived category, which is well known to be Ext$(M,N)$; however, from this diagram we can conclude that Ext$(M,N)$ is the cokernel of the map Hom$_{\Bbbk Q}(C_0,D_0) \oplus$ Hom$_{\Bbbk Q}(C_1,D_1) \overset{(p,q)}{\rightarrow} \text{Hom}_{\Bbbk Q}(C_1,D_0)$ that sends $(f,g) \mapsto f\circ p + q \circ g$. If Ext$(M,N) = 0$, then this map is surjective, a Zariski open condition on the maps $p$ and $q$. We have the following corollary whose proof is written down by Igusa, Orr, Todorov, and Weyman in [\ref{ref: IOTW rigidity is open}].

\begin{cor}\label{cor: open condition}
Suppose Ext$(M,N) = 0$. Then there exist open neighborhoods $U$ and $V$ of $p$ and $q$ respectively in Hom$_{\Bbbk Q}(C_1,C_0)$ and Hom$_{\Bbbk Q}(D_1,D_0)$ such that all $f \in U$ and $g \in V$ are monomorphisms whose cokernels satisfy Ext$(\text{coker}f, \text{coker}g) = 0$.
\end{cor}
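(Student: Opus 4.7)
The plan is to extract everything from the description of $\text{Ext}$ established in the preceding theorem. By hypothesis, the cokernel of the evaluation map
\[
\Phi_{p,q}:\text{Hom}_{\Bbbk Q}(C_0,D_0)\oplus \text{Hom}_{\Bbbk Q}(C_1,D_1) \longrightarrow \text{Hom}_{\Bbbk Q}(C_1,D_0),\qquad (a,b)\mapsto a\circ p + q\circ b,
\]
is $\text{Ext}(M,N)=0$, so $\Phi_{p,q}$ is surjective. I would then simultaneously establish two open conditions on a perturbation $(f,g)$ of $(p,q)$: first, that $f$ and $g$ remain monomorphisms, so that $C_1\xrightarrow{f}C_0$ and $D_1\xrightarrow{g}D_0$ are genuine projective presentations of their cokernels, and second, that the analogous map $\Phi_{f,g}(a,b)=a\circ f + g\circ b$ remains surjective, which by the preceding theorem forces $\text{Ext}(\text{coker}f,\text{coker}g)=0$.

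For the first condition, recall that since $\Bbbk Q$ is hereditary, $p$ fits into a short exact sequence $0\to C_1\to C_0\to M\to 0$ and is therefore a monomorphism. After a choice of bases of the projective summands, $\text{Hom}_{\Bbbk Q}(C_1,C_0)$ is identified with a finite-dimensional space of matrices, and injectivity becomes the condition that the rank is maximal; this is cut out by the non-vanishing of certain minors and is therefore a Zariski-open condition. The identical reasoning produces a Zariski-open neighborhood of $q$ on which every $g$ is a monomorphism.

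For the second condition, observe that the assignment $(f,g)\mapsto \Phi_{f,g}$ is bilinear, hence a regular morphism from $\text{Hom}_{\Bbbk Q}(C_1,C_0)\times \text{Hom}_{\Bbbk Q}(D_1,D_0)$ into the vector space of linear maps from $\text{Hom}_{\Bbbk Q}(C_0,D_0)\oplus \text{Hom}_{\Bbbk Q}(C_1,D_1)$ to $\text{Hom}_{\Bbbk Q}(C_1,D_0)$. Surjectivity of a linear map between fixed finite-dimensional vector spaces is again a maximal-rank condition cut out by non-vanishing of minors, so the set of $(f,g)$ for which $\Phi_{f,g}$ is surjective is Zariski-open. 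Since $\Phi_{p,q}$ is surjective by hypothesis, this open set contains $(p,q)$.

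Intersecting the two families of open neighborhoods yields $U\ni p$ and $V\ni q$ with the desired property: for $f\in U$ and $g\in V$, the sequences $0\to C_1\xrightarrow{f}C_0\to \text{coker}\,f\to 0$ and $0\to D_1\xrightarrow{g}D_0\to \text{coker}\,g\to 0$ are honest projective presentations, and the preceding theorem identifies $\text{Ext}(\text{coker}\,f,\text{coker}\,g)$ with the cokernel of $\Phi_{f,g}$, which vanishes. The subtle point, and the step I would be most careful about, is not any single Zariski-openness claim in isolation but the order of logic: one must secure the monomorphism condition \emph{first}, because only then are $f$ and $g$ legitimate projective presentations and only then does the preceding theorem actually identify $\text{Ext}(\text{coker}\,f,\text{coker}\,g)$ with $\text{coker}\,\Phi_{f,g}$; without this, the surjectivity of $\Phi_{f,g}$ carries no information about the $\text{Ext}$ of the cokernels.
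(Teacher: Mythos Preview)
Your proposal is correct and matches the paper's own approach: the paper does not give an independent proof of this corollary but simply observes, in the paragraph immediately preceding it, that $\text{Ext}(M,N)$ is the cokernel of the map $(a,b)\mapsto a\circ p + q\circ b$ and that its vanishing is therefore the Zariski-open condition that this map be surjective, then cites Igusa--Orr--Todorov--Weyman for the details. Your write-up is a faithful and more careful expansion of exactly that sketch; the one place to be slightly cautious is the passage from a Zariski-open set of pairs $(f,g)$ to a product $U\times V$, which is not automatic in the Zariski topology, though at the level of sketches in this paper that wrinkle is not addressed either.
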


To continue, we require the following lemma originally due to Happel and Ringel in [\ref{ref: HR happel-ringel lemma}]. A proof of which can also be found in [\ref{ref: Bill's notes}].

\begin{lem}[Happel-Ringel]
Let $A, B$ be two indecomposable $\Bbbk Q$-modules such that Ext$(B,A) = 0$. Then any nonzero morphism $f:A\rightarrow B$ is either mono or epi.
\end{lem}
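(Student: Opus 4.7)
The plan is to argue by contradiction. Suppose $f\colon A\to B$ is nonzero but neither mono nor epi. Setting $I=\operatorname{image}(f)$, $K=\ker(f)$, and $C=\operatorname{coker}(f)$, we obtain two short exact sequences $(1)\colon 0\to K\xrightarrow{\kappa} A\xrightarrow{\pi} I\to 0$ and $(2)\colon 0\to I\xrightarrow{\iota} B\to C\to 0$ with $K$ and $C$ both nonzero. The aim is to produce a direct sum decomposition of $A$ or of $B$, contradicting indecomposability.

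First, since $\Bbbk Q$ is hereditary we have $\operatorname{Ext}^2\equiv 0$, so the Yoneda product $[(2)]\cdot[(1)]\in\operatorname{Ext}^2(C,K)$ vanishes. Reading this through the long exact sequence obtained by applying $\operatorname{Hom}(-,K)$ to $(2)$, I will lift $[(1)]$ along the pullback map $\iota^*\colon\operatorname{Ext}^1(B,K)\to\operatorname{Ext}^1(I,K)$ to an extension $\tilde\eta\colon 0\to K\xrightarrow{i}\tilde A\xrightarrow{q} B\to 0$. The lifting automatically supplies a morphism of extensions $\bar\psi\colon A\to\tilde A$ satisfying $\bar\psi\kappa=i$ and $q\bar\psi=f$.

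Next, I will push $\tilde\eta$ forward along $\kappa\colon K\to A$. The resulting class lands in $\operatorname{Ext}^1(B,A)=0$, so the pushout sequence splits; a choice of splitting yields a morphism $\phi\colon\tilde A\to A$ with $\phi\circ i=\kappa$. Consequently $\phi\bar\psi\in\operatorname{End}(A)$ agrees with $\operatorname{id}_A$ on $K$, so $\phi\bar\psi-\operatorname{id}_A$ factors through $\pi$, giving $\phi\bar\psi=\operatorname{id}_A+h\pi$ for some $h\colon I\to A$.

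Now the locality of $\operatorname{End}(A)$ splits the argument in two. If $\phi\bar\psi$ is a non-unit, then so is $\operatorname{id}_A+h\pi$, and since in a local ring $1+x$ can be a non-unit only when $x$ is a unit, $h\pi$ must be an automorphism of $A$; a left inverse of $h\pi$ then left-inverts $\pi$, and combined with surjectivity this forces $\pi$ to be an isomorphism, contradicting $K\neq 0$. If instead $\phi\bar\psi$ is an automorphism, then $\bar\psi$ is a split mono and $\tilde A\cong A\oplus X$ for some $X$. Using $\bar\psi\kappa=i$, the kernel $i(K)$ of $q$ sits inside the $A$-summand, so $q|_X$ is injective with image disjoint from $I=q(A)$, and $B=I\oplus q(X)$ as $\Bbbk Q$-submodules. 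Indecomposability of $B$ together with $I\neq 0$ forces $X=0$, whence $\tilde A=A$ and $f=q$ is surjective, contradicting our assumption. The principal obstacle I anticipate is this last case: correctly extracting the genuine internal direct sum $B=I\oplus q(X)$ from the abstract pushout data, which ultimately rests on the identity $\bar\psi\kappa=i$ forcing $\ker q\subset A$ inside $\tilde A=A\oplus X$.
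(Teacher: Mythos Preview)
Your proof is correct, but it takes a genuinely different route from the paper's argument. The paper lifts the extension $0\to I\to B\to B/I\to 0$ \emph{covariantly} along the surjection $g\colon A\twoheadrightarrow I$ (using that $g_\#\colon\operatorname{Ext}^1(B/I,A)\to\operatorname{Ext}^1(B/I,I)$ is onto because $\operatorname{Ext}^2=0$) to produce a module $D$ fitting into $0\to A\to D\to B/I\to 0$; the resulting map of short exact sequences yields a Mayer--Vietoris sequence $0\to A\to I\oplus D\to B\to 0$, which splits since $\operatorname{Ext}^1(B,A)=0$, giving $I\oplus D\cong A\oplus B$. A single appeal to Krull--Schmidt (with $A,B$ indecomposable and $I\neq 0$) then forces $I\cong A$ or $I\cong B$, hence $f$ is mono or epi. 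You instead lift the \emph{other} sequence $0\to K\to A\to I\to 0$ contravariantly along $\iota\colon I\hookrightarrow B$, then push out along $K\hookrightarrow A$ and split; rather than invoking Krull--Schmidt, you unwind the locality of $\operatorname{End}(A)$ by hand and extract an explicit internal decomposition $B=I\oplus q(X)$. Both arguments hinge on the same two ingredients ($\operatorname{Ext}^2=0$ and $\operatorname{Ext}^1(B,A)=0$), but the paper's version is shorter and more conceptual because Krull--Schmidt absorbs all the endgame bookkeeping, whereas your approach is more self-contained and makes the role of the local endomorphism ring completely explicit. The step you flagged as the principal obstacle is fine: since $\ker q=i(K)=\bar\psi\kappa(K)\subset\bar\psi(A)$, restricting $q$ to the complement $X$ is injective with image meeting $I=q(\bar\psi(A))$ only in $0$, exactly as you argued.
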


\begin{proof}
Let $f:A\rightarrow B$ be a morphism and let $C = \text{im}f$ and $X = \text{ker}f$. Then we have a short exact sequence $0\rightarrow X \rightarrow A \overset{g}{\rightarrow}C \rightarrow 0$. This induces a long exact sequence that contains the chain complex Ext$(B/C,X) \rightarrow \text{Ext}(B/C,A) \overset{g_\#}{\rightarrow} \text{Ext}(B/C,C) \rightarrow 0$ where the last term vanishes because $\Bbbk Q$ is hereditary. We conclude that the map $g_{\#}$ is a surjection. In particular, this implies that there exists a module $D$ such that we have the following diagram:

\begin{center}
\begin{tabular}{c}
\xymatrix{ 0 \ar[r]  & A \ar_{g}[d] \ar[r] & D \ar[d] \ar[r] & B/C \ar_1[d] \ar[r] & 0 \\0 \ar[r] & C \ar[r] & B \ar[r] & B/C \ar[r] & 0}
\end{tabular}
\end{center}
This yields a Mayer-Vietoris sequence in which every third map is an isomorphism, hence we attain a short exact sequence $0 \rightarrow A \rightarrow C \oplus D \rightarrow B \rightarrow 0 \in \text{Ext}(B,A)$. Since $\text{Ext}(B,A) = 0$ by assumption, we have that $C \oplus D \cong A \oplus B$. We conclude that $C = A$ in which case $f$ is mono, or $C = B$, in which case $f$ is epi. \end{proof} 

An example of how we can use this lemma is the following.

\begin{exmp}
Let $Q = 1\leftarrow 2\leftarrow 3$. Then there is a map from $P_2$ to $I_2$, namely $(0,1,0)$. This map is neither mono nor epi, so by the Happel-Ringel lemma we conclude that there is a nontrivial extension of $I_2$ by $P_1$, namely $P_2 \hookrightarrow S_2 \oplus P_3 \twoheadrightarrow I_2$, which is an almost split sequence.
\end{exmp}

Moreover, if we consider a rigid module $M$, then any map from $M$ to itself is either mono or epi by the Happel-Ringel lemma. Then we have the following corollary which can also be found in [\ref{ref: Bill's notes}].

\begin{cor}\label{cor: rigid and indec implies brick}
If $M$ is rigid and indecomposable, then End$(M) = \Bbbk$; that is, $M$ is a brick.
\end{cor}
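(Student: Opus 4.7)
The plan is to apply the Happel--Ringel lemma directly to endomorphisms of $M$ and then use the hypothesis that $\Bbbk$ is algebraically closed.

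First I would take any nonzero $f \in \operatorname{End}(M)$. Since $M$ is indecomposable and $\operatorname{Ext}(M,M) = 0$ by rigidity, the Happel--Ringel lemma applies with $A = B = M$, so $f$ is either a monomorphism or an epimorphism. Since $M$ is finite-dimensional, a monomorphism $M \to M$ is automatically surjective, and dually an epimorphism $M \to M$ is automatically injective; either way $f$ is an isomorphism. Thus every nonzero element of $\operatorname{End}(M)$ is invertible, so $\operatorname{End}(M)$ is a division algebra over $\Bbbk$.

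Next I would invoke the fact that $\operatorname{End}(M)$ is finite-dimensional over $\Bbbk$ (as $M$ is a finite-dimensional $\Bbbk Q$-module). Given any $\alpha \in \operatorname{End}(M)$, the subalgebra $\Bbbk[\alpha]$ is a finite-dimensional commutative $\Bbbk$-algebra, so $\alpha$ satisfies some nonzero polynomial in $\Bbbk[t]$. Since $\Bbbk = \overline{\Bbbk}$, this polynomial factors as a product of linear terms, so $\prod_i (\alpha - \lambda_i \cdot 1_M) = 0$ for some scalars $\lambda_i \in \Bbbk$. Because $\operatorname{End}(M)$ is a division algebra, it has no zero divisors, and therefore at least one factor $\alpha - \lambda_i \cdot 1_M$ must vanish. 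Hence $\alpha = \lambda_i \cdot 1_M \in \Bbbk$, which gives $\operatorname{End}(M) = \Bbbk$ as desired.

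The only real content is the Happel--Ringel step, which has already been established in the excerpt; the rest is the standard Schur-type argument using algebraic closure. The main subtlety to be careful about is not forgetting the role of algebraic closure---without it, one would only conclude that $\operatorname{End}(M)$ is a finite-dimensional division algebra over $\Bbbk$, which need not reduce to $\Bbbk$ (for instance $\mathbb{H}$ over $\mathbb{R}$). Since the hypothesis $\Bbbk = \overline{\Bbbk}$ is in force throughout the paper, this obstacle is immediately dispatched.
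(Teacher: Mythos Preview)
Your proof is correct and follows essentially the same approach as the paper. The paper only gives the one-line observation preceding the corollary (that by Happel--Ringel every nonzero endomorphism of $M$ is mono or epi) and leaves the remaining Schur-type argument implicit; you have simply filled in those details, including the role of algebraic closure, exactly as one would expect.
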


\subsection{Rigidity is Open}

\indent 

The next and main result of this section provides some intuition behind why the term rigid is chosen. Intuitively it comes down to the fact that if you shake $M$ a little bit, $M$ won't change. The proof is written by Igusa, Orr, Todorov, and Weyman in [\ref{ref: IOTW rigidity is open}].

\begin{thm}\label{thm: rigidity is open}
Let $0 \rightarrow P' \rightarrow P \rightarrow M \rightarrow 0$ be a projective presentation of a rigid $\Bbbk Q$ module $M$. Then there is a Zariski open neighborhood $U \subset \text{Hom}(P',P)$ such that all $f \in U$, are mono and $M_f := \text{coker}f \cong M$. In other words, If $M$ is rigid, almost all modules $M'$ with the same dimension vector as $M$ are isomorphic to $M$.
\end{thm}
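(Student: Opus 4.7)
The plan is to realize the required open neighborhood as a single $G$-orbit through $f_0$, where $G = \mathrm{Aut}(P) \times \mathrm{Aut}(P')$ acts on the affine space $\mathrm{Hom}(P', P)$ by $(a,b) \cdot f = a \circ f \circ b^{-1}$, and to prove openness of the orbit by an infinitesimal argument.

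First I would verify the orbit-theoretic reformulation: two monomorphisms $f, g \in \mathrm{Hom}(P', P)$ have isomorphic cokernels if and only if they lie in the same $G$-orbit. One direction is immediate; the other lifts any isomorphism $\mathrm{coker}(f) \xrightarrow{\sim} \mathrm{coker}(g)$ to a pair of isomorphisms of the two projective presentations by the projectivity of $P$ and $P'$. By Corollary \ref{cor: open condition} applied with $N = M$, there is already a Zariski open neighborhood $V$ of $f_0$ inside which every map is mono, so it will suffice to show that $G \cdot f_0$ contains a Zariski open neighborhood of $f_0$ inside $V$.

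Next I would compute the tangent space to the orbit at $f_0$ by differentiating the orbit map $G \to \mathrm{Hom}(P', P)$ at the identity, which yields the linear map
$$\alpha : \mathrm{End}(P) \oplus \mathrm{End}(P') \to \mathrm{Hom}(P', P), \qquad (a, b) \mapsto a \circ f_0 - f_0 \circ b.$$
The image of $\alpha$ coincides with the image of the map $(a, b) \mapsto a \circ f_0 + f_0 \circ b$ appearing in the discussion preceding Corollary \ref{cor: open condition}, whose cokernel is identified there with $\mathrm{Ext}^1(M, M)$. Rigidity of $M$ forces this cokernel to vanish, so $\alpha$ is surjective.

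Finally, surjectivity of the differential makes the orbit map submersive at the identity, which for an action of an affine algebraic group on a vector space implies that $G \cdot f_0$ contains a Zariski open neighborhood of $f_0$. Intersecting with $V$ yields the desired $U$: every $f \in U$ is monic, and its cokernel is isomorphic to $M$ because $f$ lies in the $G$-orbit of $f_0$. The main technical point is the passage from infinitesimal surjectivity of $\alpha$ to an honest open piece of orbit; this is standard for actions of affine algebraic groups on affine spaces, but it is the only nontrivial input beyond the homological identification of $\mathrm{coker}(\alpha)$ with $\mathrm{Ext}^1(M, M)$.
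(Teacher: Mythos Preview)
Your argument is correct and is in fact the classical ``Voigt lemma'' approach: identify isomorphism classes of cokernels with $G$-orbits, compute the tangent space to the orbit as the image of $\alpha$, identify $\mathrm{coker}(\alpha)$ with $\mathrm{Ext}^1(M,M)$, and invoke the standard fact that a surjective differential forces the orbit to contain an open neighborhood. The paper takes a genuinely different route. It first uses Corollary~\ref{cor: open condition} only to obtain an open set $U$ on which every $f$ is mono and $\mathrm{Ext}(M_f,M_g)=0$ for all $f,g\in U$, and then argues \emph{module-theoretically} that this forces $M_f\cong M_g$: in the indecomposable case a dimension count gives a nonzero map $M_g\to M_f$, the Happel--Ringel lemma makes it mono or epi, and equality of dimension vectors finishes it; the general case (deferred to Section~\ref{sec: maximal green sequences}) orders the indecomposable summands of $M_f\oplus M_g$ into an exceptional sequence via Lemma~\ref{lem: Schofield} and reads off equal multiplicities from $\dim\mathrm{Hom}(M_f,E_i)=\dim\mathrm{Hom}(M_g,E_i)$. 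Your approach is shorter and handles the decomposable case uniformly without Happel--Ringel or exceptional sequences, at the cost of importing the algebraic-group fact about open orbits; the paper's approach is more elementary in that it never leaves module theory, and it showcases tools (Happel--Ringel, exceptional sequences) that are central themes of the surrounding notes.
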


\begin{proof}[Sketch of Proof]
Suppose $M$ is indecomposable, we will prove the general case in Section \ref{sec: maximal green sequences}. By Corollary \ref{cor: open condition}, we know there is an open $U \subset \text{Hom}(P',P)$ such that all $f,g \in U$ are mono and Ext$(M_f,M_g) = 0$. It remains to show that $M_f$ and $M_g$ are isomorphic. We have two short exact sequences $$0 \rightarrow P' \rightarrow P \rightarrow M_g \rightarrow 0$$ $$0 \rightarrow P' \rightarrow P \rightarrow M_f \rightarrow 0$$

These short exact sequences allow us to conclude that the dimension vectors of $M_f$ and $M_g$ are the same. Moreover, they induce long exact sequences $$0 \rightarrow \text{Hom}(M_g,M_f) \rightarrow \text{Hom}(P,M_f) \overset{g^\#}{\rightarrow} \text{Hom}(P',M_f) \rightarrow 0$$ $$0 \rightarrow \text{Hom}(M_f,M_f) \rightarrow \text{Hom}(P,M_f) \overset{f^\#}{\rightarrow} \text{Hom}(P',M_f) \rightarrow 0$$

\noindent
where the last term is zero since there are no extensions. Since $M_f$ is rigid and indecomposable, it is a brick by Corollary \ref{cor: rigid and indec implies brick}, so $\text{Hom}(M_f,M_f) = \Bbbk$. This forces Hom$(M_g,M_f) \neq 0$, so there is a nontrivial morphism $h:M_g \rightarrow M_f$ that is either mono or epi by the Happel-Ringel lemma. But since these two modules have the same dimension vector, it must be an isomorphism. 
\end{proof}

We will now list some consequences of this theorem, all of which have also been proven using different methods by Br\"{u}stle, and Treffinger in [\ref{ref: BT wall and chamber structure}] and by Adachi, Iyama, and Reiten in [\ref{ref: AIR tau-tilting}].

\begin{cor}
The components $T_i$ of a rigid module have linearly independent $g$-vectors and dimension vectors.
\end{cor}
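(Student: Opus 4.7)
The plan is to reduce both halves of the statement to a single claim about dimension vectors, and then to close that claim by combining Theorem~\ref{thm: rigidity is open} with Krull--Schmidt. The $g$-vector half will be free: since $g(V) = C_{\Bbbk Q}^{-1} \cdot \dim V$ and the Cartan matrix of the acyclic quiver $Q$ is upper triangular with $1$'s on the diagonal (hence invertible over $\mathbb{Z}$), multiplication by $C_{\Bbbk Q}^{-1}$ is a linear isomorphism, so $g(T_1), \dots, g(T_k)$ are linearly independent if and only if $\dim T_1, \dots, \dim T_k$ are. It therefore suffices to treat the dimension vector statement.

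For that, I will argue by contradiction. Suppose $\sum_i a_i \dim T_i = 0$ is a nontrivial integer relation. Splitting into positive and negative coefficients, set $I = \{i : a_i > 0\}$ and $J = \{j : a_j < 0\}$, and form $M = \bigoplus_{i \in I} T_i^{a_i}$ and $N = \bigoplus_{j \in J} T_j^{-a_j}$. Then $\dim M = \dim N$, and both $M$ and $N$ are rigid because each summand is rigid and $\text{Ext}^1(T_i, T_j) = 0$ for every $i, j$ by the rigidity of $T$. Applying Theorem~\ref{thm: rigidity is open} to $M$ and to $N$ separately, the isomorphism classes $[M]$ and $[N]$ correspond to Zariski open dense subsets of the representation variety $\text{Rep}_{\Bbbk Q}(\dim M)$. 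This variety is an affine space, hence irreducible, so these two dense opens must intersect, forcing $M \cong N$. But $I \cap J = \emptyset$ and the $T_i$ are pairwise non-isomorphic indecomposables, so by Krull--Schmidt this forces both $I$ and $J$ to be empty, contradicting the nontriviality of the relation.

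The main obstacle I anticipate is translating Theorem~\ref{thm: rigidity is open}, which is phrased in terms of cokernels of maps between fixed projective modules, into the geometric statement that the isomorphism class of a rigid module $M$ is Zariski open in $\text{Rep}_{\Bbbk Q}(\dim M)$. The bridge will be the uniqueness of minimal projective presentations up to chain homotopy, which lets me parametrize almost all modules of a given dimension vector by maps $P' \to P$ between a fixed pair of projectives, and then push the Zariski openness in $\text{Hom}(P', P)$ forward to an open $\text{GL}(\dim M)$-orbit. Once this geometric reformulation is in place, irreducibility of the representation variety finishes the argument in one line.
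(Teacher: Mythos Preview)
Your argument is essentially the paper's: reduce via the Cartan matrix, split a hypothetical relation into positive and negative parts, build two rigid modules with equal invariants, invoke Theorem~\ref{thm: rigidity is open} to get two dense opens in an irreducible space, intersect, and contradict Krull--Schmidt. The one difference is that the paper works on the $g$-vector side rather than the dimension-vector side, and this dissolves the obstacle you anticipate: once $\dim M = \dim N$ you also have $g(M) = g(N)$, so the minimal presentations of $M$ and $N$ live in the \emph{same} space $\text{Hom}(P',P)$ (the positive and negative parts of the common $g$-vector determine $P$ and $P'$). Theorem~\ref{thm: rigidity is open} then gives two Zariski opens directly in that Hom space, and their intersection is nonempty --- no passage to $\text{Rep}_{\Bbbk Q}(\dim M)$ is needed. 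Your detour through representation varieties is valid (rigid modules do have open orbits), just unnecessary here.
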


\begin{proof}[Sketch of Proof]
Since dim$M = C_{\Bbbk Q} g(M)$ where $C_{\Bbbk Q}$ is the Cartan matrix, it suffices to prove this for the $g$-vectors. For a contradiction, suppose that the components of a rigid module have linearly dependent dimension vectors. Then $\sum n_i g(T_i) = 0$. We then collect the positive and negative terms as in the following example. Suppose for example we have $2g(T_1) + g(T_2) = 3g(T_3) + g(T_4) = (4,-2,-1,5)$. Then this implies we have two maps $\xymatrix{2P_2 \oplus P_3 \ar@<.5ex>^f[r] \ar@<-.5ex>_g[r] & 4P_1 \oplus 5 P_4}$ such that coker$f = 2T_1 \oplus T_2$ and coker$g = 3T_3 \oplus T_4$. By the previous theorem, we have Zariski open $U,V \subset \text{Hom}(2P_2 \oplus P_3,4P_1 \oplus 5 P_4)$ such that all $h \in U$ have the same cokernel as $f$ and all $j \in V$ have the same cokernel as $g$. Since $U$ and $V$ are Zariski open, their intersection is not empty. But then the previous theorem implies that $3T_3 \oplus T_4 \cong 2T_1 \oplus T_2$, a contradiction.
\end{proof}

The following three facts are immediate consequences from the previous corollary.

\begin{cor} {\color{white} .}
\begin{enumerate}
\item Rigid modules can have at most $n$ nonisomorphic indecomposable summands.
\item The $g$-vectors of the components of a silting pair span an $n-1$ simplex. 
\item Simplices can't overlap.
\end{enumerate}
\end{cor}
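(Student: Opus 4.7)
The plan is to derive all three items directly from the linear independence of $g$-vectors established in the preceding corollary, extending that argument where necessary to cover shifted projectives.

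For item (1), if $M = M_1 \oplus \cdots \oplus M_k$ is rigid with pairwise non-isomorphic indecomposable summands, the preceding corollary says the vectors $g(M_1), \ldots, g(M_k) \in \mathbb{Z}^n$ are linearly independent. Since they live in an $n$-dimensional space, we immediately get $k \leq n$.

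For item (2), a silting pair $(T,P)$ with $T = T_1 \oplus \cdots \oplus T_k$ and $P = P_{j_1} \oplus \cdots \oplus P_{j_{n-k}}$ carries exactly $n$ $g$-vectors: $g(T_1), \ldots, g(T_k)$ together with $g(P_{j_l}[1]) = -e_{j_l}$. I would prove these $n$ vectors are linearly independent by reprising the sign-collection trick from the proof of the preceding corollary, but working in the category of 2-term silting complexes. Given a hypothetical dependence $\sum a_i g(T_i) + \sum b_l g(P_{j_l}[1]) = 0$, separate terms by sign and rewrite as an equality of effective $g$-vectors; each side corresponds to a 2-term silting complex between the same pair of projectives, and by Theorem \ref{thm: rigidity is open} a generic morphism in the relevant Hom-space must have cokernel isomorphic to both constructions, forcing the two sides to determine the same decomposition into indecomposables and contradicting the disjointness guaranteed by the silting-pair condition $\operatorname{Hom}(P,T) = 0$. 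Once linear independence is secured, the $n$ rays from the origin hit the unit sphere in $n$ affinely independent points, which span an $(n-1)$-simplex.

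For item (3), suppose that the simplices of two distinct silting pairs meet in their interiors. A common interior point is a strictly positive combination of the $g$-vectors of each pair, yielding a relation $\sum a_i\, g(X_i) = \sum b_j\, g(Y_j)$ with strictly positive coefficients and with the underlying sets $\{X_i\}$ and $\{Y_j\}$ genuinely different. Assembling the corresponding projective presentations on each side gives two morphisms in the same Hom-space whose total $g$-vectors agree; applying Theorem \ref{thm: rigidity is open} once more forces the two presentations to have isomorphic cokernels, whence the two silting pairs must coincide, a contradiction.

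The main obstacle is the linear-independence claim in item (2): the preceding corollary was stated for rigid modules, and its argument does not literally apply when shifted projectives are present, so one must genuinely lift Theorem \ref{thm: rigidity is open} to the category of 2-term silting complexes (equivalently, invoke the $\tau$-rigidity framework of Adachi--Iyama--Reiten). Once this extension is granted, items (1) and (3) follow from small variants of the same sign-separation argument used in the preceding corollary.
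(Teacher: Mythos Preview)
Your proposal is correct and follows the same underlying strategy as the paper, which simply declares all three items ``immediate consequences from the previous corollary'' without further argument. In fact you have been more careful than the paper: you correctly observe that item (2) requires extending the linear-independence argument to cover shifted projectives (the preceding corollary is stated only for rigid modules, and the $g$-vectors $g(T_i)$ can have nonzero entries outside the support of $T$, so one cannot simply block-decompose), and you correctly note that item (3) requires reapplying the \emph{technique} behind Theorem~\ref{thm: rigidity is open} rather than merely citing the statement of the preceding corollary, since the components of two distinct silting pairs taken together need not form a rigid collection. Your proposed fix via 2-term complexes (equivalently, the $\tau$-rigid framework of \cite{ref: AIR tau-tilting}) is the standard way to close this gap.
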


In particular, the fact that these simplices don't overlap allows us to conclude that the stability pictures we have drawn in Section \ref{sec: stability pictures} are indeed accurate. Moreover, it implies that we cannot have simplices of the following form in our extended compatibility graph.

\begin{center}

\tikzset{every picture/.style={line width=0.75pt}} 

\begin{tikzpicture}[x=0.75pt,y=0.75pt,yscale=-1,xscale=1]

\draw   (263,105.11) -- (302.27,69.29) -- (303.72,139.28) -- cycle ;
\draw   (302.45,69.1) -- (357.02,72.98) -- (303.18,123.31) -- cycle ;
\draw   (303.16,138.55) -- (302.72,83.84) -- (357.14,133.54) -- cycle ;

\draw (239,98.4) node [anchor=north west][inner sep=0.75pt]    {$T_{1}$};
\draw (290,49.4) node [anchor=north west][inner sep=0.75pt]    {$T_{2}$};
\draw (293,143.4) node [anchor=north west][inner sep=0.75pt]    {$T_{3}$};
\draw (360,59.4) node [anchor=north west][inner sep=0.75pt]    {$T_{4}$};
\draw (361,128.4) node [anchor=north west][inner sep=0.75pt]    {$T_{5}$};

\end{tikzpicture}

\end{center}

If we did have such simplices, then the mutation of $T_1$ would not be well defined since we would not know whether to send it to $T_4$ or $T_5$.

\subsection{Stable Barcode}

\indent 

In this subsection, we will present a connection between rigidity and persistent homology through stable barcodes. For more on the topic we suggest Vejdemo-Johansson's survey on the topic [\ref{ref: V-J survey on persistent homology}]. Throughout this subsection, let $Q$ be the linearly oriented quiver of type $\mathbb{A}_n$ given by $1 \leftarrow 2 \leftarrow \dots \leftarrow n$. Recall that the indecomposable modules are uniquely determined by their dimension vector. This leads to the notion of \textbf{interval modules} where $M_{ab}$ denotes the indecomposable module with support $[a,b]$. We will construct the stable barcode associated to the rigid modules. To do this, we need a lemma that classifies extensions of interval modules.

\begin{lem}
Ext$(M_{cd},M_{ab}) \neq 0$ if and only if one of the following hold:
\begin{enumerate}
\item $a<c\leq b < d$.
\item $b+1 = c$.
\end{enumerate}
\end{lem}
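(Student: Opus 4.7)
My plan is to compute $\mathrm{Ext}^1(M_{cd}, M_{ab})$ directly from a minimal projective resolution of $M_{cd}$ and then read off when it is nonzero.

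First I will record the minimal projective resolution of an interval module. In the orientation $1 \leftarrow 2 \leftarrow \cdots \leftarrow n$, the indecomposable projective at vertex $i$ is itself an interval, $P_i = M_{1i}$, and the top of $M_{ab}$ is $S_b$, since $b$ is the unique source of the support. So the projective cover is $P_b$ with kernel $P_{a-1}$ (setting $P_0 := 0$), giving $0 \to P_{a-1} \to P_b \to M_{ab} \to 0$. Applying $\mathrm{Hom}(-, M_{ab})$ to the analogous resolution of $M_{cd}$ and using hereditariness of $\Bbbk Q$, $\mathrm{Ext}^1(M_{cd}, M_{ab})$ becomes the cokernel of the restriction map
$$\rho \colon \mathrm{Hom}(P_d, M_{ab}) \to \mathrm{Hom}(P_{c-1}, M_{ab}).$$
Using the standard identification $\mathrm{Hom}(P_i, M_{ab}) \cong M_{ab}(i)$, each side is either $\Bbbk$ (when the index lies in $[a,b]$) or $0$, and $\rho$ is the action on $M_{ab}$ of the unique path from $d$ to $c-1$. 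Since every structure map of an interval module is the identity on $\Bbbk$, this path acts as the identity exactly when every intermediate vertex $c-1, c, \ldots, d$ lies in $[a,b]$, i.e., when $a \leq c-1$ and $d \leq b$, and as zero otherwise.

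The rest is a short case analysis on when $\rho$ fails to be surjective. When $c = 1$ we have $P_{c-1} = 0$ and $\mathrm{Ext}^1 = 0$, contributing nothing. Otherwise, non-surjectivity requires $\mathrm{Hom}(P_{c-1}, M_{ab}) = \Bbbk$, i.e., $a \leq c-1 \leq b$, together with $\rho = 0$, which since both one-dimensional spaces would then be identified by $\rho$ forces $\mathrm{Hom}(P_d, M_{ab}) = 0$, i.e., $d \notin [a,b]$; because $d \geq c > a$ this means precisely $d > b$. Splitting the resulting condition $a < c \leq b+1$ and $b < d$ into the subcases $c - 1 = b$ (case (2), $c = b+1$, in which $d > b$ is automatic) versus $c - 1 < b$ (case (1), $a < c \leq b < d$) recovers the two alternatives in the statement.

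I do not anticipate a serious obstacle: once the resolution and Hom groups are in hand, everything reduces to elementary bookkeeping. The only items requiring care are the edge case $c = 1$, handled by the convention $P_0 = 0$, and checking that $\rho$ really is the identity rather than a hidden scalar on the one-dimensional pieces, which is immediate from the fact that the structure maps of an interval module are identities on $\Bbbk$.
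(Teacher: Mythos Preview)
Your argument is correct. The projective resolution $0 \to P_{c-1} \to P_d \to M_{cd} \to 0$ (with $P_0 = 0$) is right for this orientation, and the identification of $\rho$ with the structure map $M_{ab}(d) \to M_{ab}(c-1)$ reduces the question to the bookkeeping you carry out; the split into $c-1=b$ versus $c-1<b$ gives exactly the two cases.

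The paper takes a different and less complete route: it does not prove the lemma, but only records, after the statement, the explicit non-split extensions $M_{ab} \hookrightarrow M_{ad} \oplus M_{cb} \twoheadrightarrow M_{cd}$ in case~(1) and $M_{ab} \hookrightarrow M_{ad} \twoheadrightarrow M_{cd}$ in case~(2), thereby witnessing the ``if'' direction. Your computation via the projective resolution handles both directions at once and in particular supplies the ``only if'' direction that the paper leaves implicit. What the paper's remark buys is a concrete description of the extensions themselves, which your cokernel computation does not immediately display; conversely, your approach is self-contained and makes the vanishing cases transparent.
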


In case 1 we have a short exact sequence given by $M_{ab} \hookrightarrow M_{ad} \oplus M_{cb} \twoheadrightarrow M_{cd}$. In case 2 we have an exact sequence $M_{ab} \hookrightarrow M_{ad}  \twoheadrightarrow M_{cd}$. This lemma allows us to prove the following theorem.

\begin{thm}
The unique rigid module with dimension vector $v$ is given by placing $v_i$ spots above the point $(i,0)$ on the $x$-axis and joining adjacent spots horizontally. This is called the \textbf{stable barcode} associated to $v$.
\end{thm}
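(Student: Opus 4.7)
The plan is to separate existence from uniqueness. For uniqueness, suppose $M$ and $M'$ are both rigid $\Bbbk Q$-modules with dimension vector $v$. Applying Theorem~\ref{thm: rigidity is open} to each of their minimal projective presentations produces Zariski open sets $U_M, U_{M'} \subseteq \operatorname{Hom}(P', P)$ whose cokernels are isomorphic to $M$ and $M'$ respectively. Since $\operatorname{Hom}(P', P)$ is an irreducible affine space, any two nonempty Zariski opens meet, so some common $f \in U_M \cap U_{M'}$ has cokernel isomorphic to both, forcing $M \cong M'$.

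For existence, I would make the barcode construction explicit as a direct sum of interval modules. Stack the $v_i$ dots above column $i$ at heights $1, 2, \ldots, v_i$ and at each height $k$ read off the maximal horizontal runs $[a,b]$ with $v_i \geq k$ for all $i \in [a,b]$; let $M_{ab}^{(k)}$ be the corresponding interval module and set $T$ equal to the direct sum of these over all $(a,b,k)$. By construction, the number of bars covering column $i$ equals $|\{k : v_i \geq k\}| = v_i$, so $\dim T = v$.

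To see $T$ is rigid, it suffices to show $\operatorname{Ext}^1(M_{cd}^{(k')}, M_{ab}^{(k)}) = 0$ for every ordered pair of summands, and by the preceding lemma the only potential obstructions are the two configurations it isolates. In case (1), $a < c \leq b < d$: the right-maximality of $M_{ab}^{(k)}$ gives $v_{b+1} < k$, while $b+1 \in [c,d]$ gives $v_{b+1} \geq k'$, so $k' < k$; symmetrically, $c - 1 \in [a,b]$ forces $v_{c-1} \geq k$, while the left-maximality of $M_{cd}^{(k')}$ gives $v_{c-1} < k'$, so $k < k'$ — contradiction. In case (2), $b+1 = c$: using $c - 1 = b$ one gets $k \leq v_b = v_{c-1} < k'$ from the two maximality conditions at column $b$, and $k' \leq v_c = v_{b+1} < k$ from the conditions at column $c$, again a contradiction.

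The main obstacle is really just the case analysis in the last paragraph, and in particular recognizing that the lower-aligned staircase is precisely the configuration that forbids both of the bad $\operatorname{Ext}^1$ patterns simultaneously. The remaining ingredients — uniqueness, matching the dimension vector, and the decomposition $\operatorname{Ext}^1(T,T) = \bigoplus \operatorname{Ext}^1(\text{summand}, \text{summand})$ — are formal.
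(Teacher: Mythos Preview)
Your argument is correct and is essentially the route the paper has in mind: the paper gives no explicit proof here, only the remark that the preceding Ext lemma ``allows us to prove'' the theorem, and your existence argument (barcode module plus the two-case check against the lemma) is precisely the intended application, while your uniqueness argument via Theorem~\ref{thm: rigidity is open} is the natural companion from the same section.

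One small technical point in the uniqueness step: you place $U_M$ and $U_{M'}$ inside a common $\operatorname{Hom}(P',P)$ by taking \emph{minimal} projective presentations, but two modules with the same dimension vector need not a priori have the same projective cover and first syzygy (that is essentially what you are trying to prove). The fix is immediate --- since $g(M)=g(M')$, you can pad each minimal presentation by identity summands to land in a common $\operatorname{Hom}(\tilde P',\tilde P)$, and Theorem~\ref{thm: rigidity is open} is stated for an arbitrary (not necessarily minimal) projective presentation; alternatively, just invoke the ``in other words'' clause of that theorem, which is already phrased in terms of modules of a fixed dimension vector rather than a fixed Hom-space of projectives.
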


\begin{exmp}
Take $v = (3,4,2)$. Then the corresponding rigid module is $M_{22} \oplus M_{12} \oplus 2M_{13}$ and the stable barcode is as follows.
\begin{center}

\tikzset{every picture/.style={line width=0.75pt}} 

\begin{tikzpicture}[x=0.75pt,y=0.75pt,yscale=-1,xscale=1]

\draw    (237.22,198) -- (238.22,198) ;
\draw [shift={(238.22,198)}, rotate = 0] [color={rgb, 255:red, 0; green, 0; blue, 0 }  ][fill={rgb, 255:red, 0; green, 0; blue, 0 }  ][line width=0.75]      (0, 0) circle [x radius= 3.35, y radius= 3.35]   ;
\draw    (298.22,198) ;
\draw [shift={(298.22,198)}, rotate = 0] [color={rgb, 255:red, 0; green, 0; blue, 0 }  ][fill={rgb, 255:red, 0; green, 0; blue, 0 }  ][line width=0.75]      (0, 0) circle [x radius= 3.35, y radius= 3.35]   ;
\draw    (357.22,198) ;
\draw [shift={(357.22,198)}, rotate = 0] [color={rgb, 255:red, 0; green, 0; blue, 0 }  ][fill={rgb, 255:red, 0; green, 0; blue, 0 }  ][line width=0.75]      (0, 0) circle [x radius= 3.35, y radius= 3.35]   ;
\draw    (238.22,158) ;
\draw [shift={(238.22,158)}, rotate = 0] [color={rgb, 255:red, 0; green, 0; blue, 0 }  ][fill={rgb, 255:red, 0; green, 0; blue, 0 }  ][line width=0.75]      (0, 0) circle [x radius= 3.35, y radius= 3.35]   ;
\draw    (298.22,158) ;
\draw [shift={(298.22,158)}, rotate = 0] [color={rgb, 255:red, 0; green, 0; blue, 0 }  ][fill={rgb, 255:red, 0; green, 0; blue, 0 }  ][line width=0.75]      (0, 0) circle [x radius= 3.35, y radius= 3.35]   ;
\draw    (358.22,157) -- (358.22,158) ;
\draw [shift={(358.22,158)}, rotate = 90] [color={rgb, 255:red, 0; green, 0; blue, 0 }  ][fill={rgb, 255:red, 0; green, 0; blue, 0 }  ][line width=0.75]      (0, 0) circle [x radius= 3.35, y radius= 3.35]   ;
\draw    (238.22,117) ;
\draw [shift={(238.22,117)}, rotate = 0] [color={rgb, 255:red, 0; green, 0; blue, 0 }  ][fill={rgb, 255:red, 0; green, 0; blue, 0 }  ][line width=0.75]      (0, 0) circle [x radius= 3.35, y radius= 3.35]   ;
\draw    (298.22,118) ;
\draw [shift={(298.22,118)}, rotate = 0] [color={rgb, 255:red, 0; green, 0; blue, 0 }  ][fill={rgb, 255:red, 0; green, 0; blue, 0 }  ][line width=0.75]      (0, 0) circle [x radius= 3.35, y radius= 3.35]   ;
\draw    (298.22,79) -- (298.22,78) ;
\draw [shift={(298.22,78)}, rotate = 270] [color={rgb, 255:red, 0; green, 0; blue, 0 }  ][fill={rgb, 255:red, 0; green, 0; blue, 0 }  ][line width=0.75]      (0, 0) circle [x radius= 3.35, y radius= 3.35]   ;
\draw    (213.22,198) -- (380.22,198) ;
\draw    (213.22,158) -- (380.22,158) ;
\draw    (214.22,117) -- (321.22,118) ;
\draw   (386.22,198) .. controls (390.89,198.12) and (393.28,195.85) .. (393.39,191.18) -- (393.47,188.18) .. controls (393.64,181.51) and (396.05,178.24) .. (400.72,178.35) .. controls (396.05,178.24) and (393.8,174.85) .. (393.97,168.18)(393.89,171.18) -- (394.04,165.18) .. controls (394.16,160.51) and (391.89,158.12) .. (387.22,158) ;

\draw (335,108.4) node [anchor=north west][inner sep=0.75pt]    {$M_{12}$};
\draw (310,68.4) node [anchor=north west][inner sep=0.75pt]    {$M_{22}$};
\draw (406,170.4) node [anchor=north west][inner sep=0.75pt]    {$2M_{13}$};
\draw (171,214.4) node [anchor=north west][inner sep=0.75pt]    {$v_{i} :\ \ \ \ \ \ \ 3\ \ \ \ \ \ \ \ \ \ 4\ \ \ \ \ \ \ \ \ \ 2$};
\draw (167,240.4) node [anchor=north west][inner sep=0.75pt]    {$i\ \ :\ \ \ \ \ \ \ 1\ \ \ \ \ \ \ \ \ \ 2\ \ \ \ \ \ \ \ \ \ 3$};

\end{tikzpicture}

\end{center}
\end{exmp}

\section{Maximal Green Sequences}\label{sec: maximal green sequences}

\indent

In this section we will begin by removing the indecomposability assumption in the proof of Theorem \ref{thm: rigidity is open}. We begin with a lemma that was first proven by Schofield in [\ref{ref: S general reps}], then proved in the language used here by Igusa and Schiffler in [\ref{ref: IS Clusters form exceptional collections}]. 

\begin{lem}\label{lem: Schofield}
Let $T_1, T_2, \dots , T_k$ be indecomposable ext-orthogonal $\Bbbk Q$-modules; that is, Ext$(T_i,T_j) = 0$ for all $i$ and $j$. Then the $T_i$ can be ordered such that Hom$(T_j,T_i) = 0$ for all $i<j$. Such a sequence of modules is an example of an \textbf{exceptional sequence}, which is a sequence of modules $(M_1, M_2, \dots , M_k)$ such that Hom$(M_j,M_i) = 0 =$ Ext$(M_j,M_i)$ for all $i<j$.
\end{lem}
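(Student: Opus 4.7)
The plan is to associate to the collection $T_1,\ldots,T_k$ the directed graph $G$ on vertex set $\{1,\ldots,k\}$ with an arrow $a\to b$ exactly when $a\neq b$ and $\mathrm{Hom}(T_a,T_b)\neq 0$, and then to show $G$ has no oriented cycles. Given this, any topological ordering of $G$ gives the desired re-indexing: along such an order every arrow runs from a lower to a higher index, which is exactly the statement that $\mathrm{Hom}(T_j,T_i)=0$ whenever $i<j$. Combined with the already-assumed Ext-vanishings, this is precisely an exceptional sequence. Everything therefore reduces to ruling out directed cycles in $G$.

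The two tools I would lean on throughout are the Happel--Ringel Lemma and Corollary~\ref{cor: rigid and indec implies brick} (each $T_i$ is a brick, being rigid indecomposable). Since the $T_i$ are pairwise non-isomorphic, Happel--Ringel forces each nonzero $f\colon T_i\to T_j$ with $i\neq j$ to be \emph{strictly} mono or \emph{strictly} epi. I would then induct on the length $m\geq 2$ of a hypothetical cycle
\[
T_{i_1}\xrightarrow{f_1}T_{i_2}\xrightarrow{f_2}\cdots\xrightarrow{f_m}T_{i_1},
\]
with each $f_\ell\neq 0$ and all indices $i_\ell$ distinct.

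For $m=2$, I run through the mono/epi combinations on $(f_1,f_2)$. If both are mono, the composite $f_2f_1$ is a nonzero mono endomorphism of the brick $T_{i_1}$, hence a nonzero scalar, hence an isomorphism; thus $f_1$ is split mono, exhibiting the indecomposable $T_{i_2}$ as containing $T_{i_1}$ as a direct summand, forcing $T_{i_1}\cong T_{i_2}$, contradicting distinctness. The both-epi case is dual. In the mixed case (say $f_1$ mono, $f_2$ epi), the composite $f_1f_2\colon T_{i_2}\to T_{i_2}$ has image $f_1(T_{i_1})\neq 0$ and hence is again a nonzero scalar endomorphism of the brick $T_{i_2}$; once more $T_{i_1}$ becomes a summand of $T_{i_2}$ and we reach the same contradiction.

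For $m\geq 3$, I would first exclude cycles in which every $f_\ell$ is mono: their composition is a nonzero mono $T_{i_1}\to T_{i_1}$, hence an iso, so $f_1$ is split mono and $T_{i_1}\cong T_{i_2}$ by indecomposability, contradicting distinctness. The all-epi case is dual. Hence both monos and epis appear along the cycle, and a cyclic pigeonhole argument gives an index $j$ at which $f_j$ is epi and $f_{j+1}$ is mono. The composite $f_{j+1}f_j\colon T_{i_j}\to T_{i_{j+2}}$ has image $f_{j+1}(T_{i_{j+1}})\neq 0$, so it is a nonzero arrow of $G$; substituting it for the pair $(f_j,f_{j+1})$ produces a cycle of length $m-1$ on the still-distinct vertices $T_{i_1},\ldots,T_{i_j},T_{i_{j+2}},\ldots,T_{i_m}$, contradicting the inductive hypothesis.

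The main subtlety is getting the orientation of the composites right, both in the mixed case of $m=2$ and in the shortening step for $m\geq 3$: it is crucial to compose "epi first, then mono", since the reverse composite (mono then epi) is precisely what can vanish. Once this orientation is fixed the argument unspools cleanly, with the brick property doing the essential work whenever a composite returns to its starting object.
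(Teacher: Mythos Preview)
Your argument is correct and follows essentially the same strategy as the paper: reduce to showing the Hom-graph on $\{T_1,\ldots,T_k\}$ is acyclic, then use Happel--Ringel to force every arrow to be a strict mono or strict epi and locate an epi-then-mono pair along any putative cycle. The only difference is in the finishing move: where you inductively shorten the cycle using the nonzero composite $f_{j+1}f_j$, the paper observes directly that this composite is neither mono (kernel of $f_j$ is nonzero) nor epi (image of $f_{j+1}$ is proper), which already contradicts Happel--Ringel applied to $T_{i_j}\to T_{i_{j+2}}$ --- no induction needed.
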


\begin{proof}[Sketch of Proof]
It suffices to show that there are no oriented cycles of hom: 
\begin{center}
\begin{tabular}{c}
\xymatrix{ & T_2 \ar[dr] & \\T_1 \ar[ur] & & T_3 \ar[dl] \\ & T_4 \ar[ul] & }
\end{tabular}
\end{center}
If there were such a cycle, by the Happel-Ringel lemma, each map is a monomorphism or an epimorphism. This forces an epimorphism followed by a monomorphism whose composition is nonzero and neither mono nor epi, a contradiction of the Happel-Ringel lemma.
\end{proof}

\begin{cor}
The set $\{T_1, T_2, \dots , T_k\}$ form an exceptional collection, that is, there exists at least one ordering $(T_{i_1}, T_{i_2}, \dots , T_{i_k})$ that is an exceptional sequence. 
\end{cor}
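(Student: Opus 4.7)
The plan is to observe that this corollary is essentially a direct repackaging of Lemma \ref{lem: Schofield} combined with the hypothesis of ext-orthogonality. First I would invoke the lemma to produce a reordering $(T_{i_1}, T_{i_2}, \dots, T_{i_k})$ of the given modules satisfying $\text{Hom}(T_{i_b}, T_{i_a}) = 0$ whenever $a < b$. Then I would check the Ext condition of an exceptional sequence: since the original collection is assumed ext-orthogonal, $\text{Ext}(T_i, T_j) = 0$ for all $i, j$, and in particular $\text{Ext}(T_{i_b}, T_{i_a}) = 0$ whenever $a < b$. Both conditions of the definition of an exceptional sequence are therefore simultaneously satisfied by this ordering.

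Concretely, I would write something like: apply Lemma \ref{lem: Schofield} to obtain a permutation $\sigma$ of $\{1, 2, \dots, k\}$ for which $\text{Hom}(T_{\sigma(b)}, T_{\sigma(a)}) = 0$ for all $a < b$. Set $(M_1, M_2, \dots, M_k) := (T_{\sigma(1)}, T_{\sigma(2)}, \dots, T_{\sigma(k)})$. For $a < b$, we then have $\text{Hom}(M_b, M_a) = 0$ by Schofield, and $\text{Ext}(M_b, M_a) = \text{Ext}(T_{\sigma(b)}, T_{\sigma(a)}) = 0$ by the ext-orthogonality assumption. Hence $(M_1, \dots, M_k)$ is an exceptional sequence, so $\{T_1, \dots, T_k\}$ forms an exceptional collection.

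There is essentially no obstacle here — all the real work was done in proving Lemma \ref{lem: Schofield}, where the Happel-Ringel argument ruled out oriented Hom-cycles. The corollary is really just a matter of recording that the ordering produced by the lemma satisfies the full definition of an exceptional sequence once the Ext half is added in for free from the hypothesis. The only substantive point worth noting in the writeup is that the definition of exceptional sequence in the lemma statement uses the indexing convention $i < j \Rightarrow \text{Hom}(M_j, M_i) = \text{Ext}(M_j, M_i) = 0$, so one should be careful to reindex in the same direction when quoting the lemma.
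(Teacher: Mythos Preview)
Your proposal is correct and matches the paper's approach: the paper gives no explicit proof of the corollary, treating it as immediate from Lemma~\ref{lem: Schofield}, which already states that the resulting ordering is an exceptional sequence. Your writeup simply spells out the one-line observation that the Ext half of the definition is supplied by the ext-orthogonality hypothesis, which is exactly the intended reading.
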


We are now ready to prove Theorem \ref{thm: rigidity is open}.

\begin{proof}[Proof Sketch of Theorem \ref{thm: rigidity is open}]
Let $M$ be a rigid $\Bbbk Q$-module and $P_1 \overset{p}{\rightarrow} P_0 \rightarrow M$ be a projective presentation of $M$. By Corollary \ref{cor: open condition}, there exists an open $U\subset \text{Hom}(P_1,P_0)$ such that any $f,g \in U$ is such that Ext$(M_f,M_g) = 0$ where $M_f$ is the cokernel of $f$ and similarly for $g$. Therefore, $M_f \oplus M_g$ is rigid. Let $E_1, E_2, \dots E_k$ be the list of nonisomorphic indecomposable summands of $M_f \oplus M_g$. By Lemma \ref{lem: Schofield}, we may without loss of generality assume that Hom$(E_j,E_i) = 0$ for $j > i$. By applying Hom$(-,E_i)$ to the short exact sequence $0 \rightarrow P_1 \overset{f}{\rightarrow} P_0 \rightarrow M_f \rightarrow 0$ and the analogous one for $g$, we get the long exact sequences 

 $$ 0\rightarrow \text{Hom}(M_f,E_i) \rightarrow \text{Hom}(P_0,E_i) \rightarrow \text{Hom}(P_1,E_i) \rightarrow 0$$
 $$ 0\rightarrow \text{Hom}(M_g,E_i) \rightarrow \text{Hom}(P_0,E_i) \rightarrow \text{Hom}(P_1,E_i) \rightarrow 0$$

\noindent
where the last term is zero since Ext$(M_f,E_i) = 0$ and analogously for $g$. By analyzing dimensions we conclude that dim(Hom$(M_f,E_i)) =$ dim(Hom$(M_g,E_i)) = \Bbbk^{a_i}$. Suppose that $M_f = b_1E_1 \oplus b_2E_2 \oplus \dots \oplus b_kE_k$. Then we have that $a_1 = b_1, a_2 = b_2 + c_{12}b_1, $ and so on where $c_{ij} = \text{dim(Hom(}E_i,E_j))$. Therefore the $a_i$ determine the $b_i$ and we conclude $M_f \cong M_g$.
\end{proof}

\subsection{Sign Coherence of $g$-vectors}

\indent 

The sign coherence of $g$-vectors was a conjecture until proven in [\ref{ref: GHKK sign coherence c-vectors}] by Gross, Hacking, Keel, and Kontsevich for skew-symmetrizable cluster algebras. The now theorem is as follows.

\begin{thm}
For any tilting module $T = T_1 \oplus T_2 \oplus \dots \oplus T_n$ and any $k$, the $k$th coordinate of $g(T_i)$ have the same sign.
\end{thm}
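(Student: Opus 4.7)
The plan is to argue by contradiction using the interplay between rigidity and minimality of projective covers. Suppose sign coherence fails at some coordinate $k$, so there exist summands $T_i$ and $T_j$ with $(g(T_i))_k>0$ and $(g(T_j))_k<0$. Unpacking the definition $(g(M))_k = [P_M:P_k] - [P'_M:P_k]$, the first inequality forces $P_k$ to be a direct summand of $P_{T_i}$, and the second forces $P_k$ to be a direct summand of $P'_{T_j}$. The goal is to engineer a single module map $P_k\to T_i$ whose image of the canonical generator admits two incompatible descriptions, one putting it outside $\text{rad}(T_i)$ and the other inside.

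First I would exploit the $P_k$-summand of $P_{T_i}$. Because $P_{T_i}\twoheadrightarrow T_i$ is the minimal projective cover, it induces an isomorphism on tops, so composing a summand inclusion $\iota_i:P_k\hookrightarrow P_{T_i}$ with this cover produces a map $\xi:P_k\to T_i$ whose image of $e_k$ represents a minimal generator of $T_i$; in particular it lies outside $\text{rad}(T_i)$. Next I would exploit the $P_k$-summand of $P'_{T_j}$: pick a summand projection $\pi_j:P'_{T_j}\twoheadrightarrow P_k$ and form $\alpha=\xi\circ\pi_j:P'_{T_j}\to T_i$, and pick a summand inclusion $\iota_j:P_k\hookrightarrow P'_{T_j}$ splitting $\pi_j$.

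Rigidity enters next. Applying $\text{Hom}(-,T_i)$ to the short exact sequence $0\to P'_{T_j}\xrightarrow{p_j} P_{T_j}\to T_j\to 0$ and using $\text{Ext}(T_j,T_i)=0$, the map $p_j^*:\text{Hom}(P_{T_j},T_i)\to\text{Hom}(P'_{T_j},T_i)$ is surjective, so $\alpha$ lifts to some $\tilde\alpha:P_{T_j}\to T_i$ with $\tilde\alpha\circ p_j=\alpha$. Tracing $e_k$ through $\iota_j$ computes $\xi(e_k)=(\alpha\circ\iota_j)(e_k)=\tilde\alpha(p_j(\iota_j(e_k)))$. But $p_j(P'_{T_j})\subseteq\text{rad}(P_{T_j})$, since $P_{T_j}\twoheadrightarrow T_j$ is a projective cover, and module morphisms preserve radicals, so $\tilde\alpha$ carries this element into $\text{rad}(T_i)$. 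This contradicts the statement from the previous paragraph that $\xi(e_k)\notin\text{rad}(T_i)$, finishing the argument.

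The main obstacle is coordinating the two minimality conditions in parallel: the positive side demands that the summand inclusion $P_k\hookrightarrow P_{T_i}$ really picks out a minimal generator of $T_i$ (not something in the radical), while the negative side demands that the image of $p_j$ really lies inside $\text{rad}(P_{T_j})$. Both ingredients rely on minimality of distinct projective covers, and rigidity is the glue that identifies the two resulting avatars of $\xi(e_k)$. A secondary point worth verifying is that hereditarity makes $p_j$ injective, so the $P_k$-summand of $P'_{T_j}$ is not crushed in passing to $P_{T_j}$, and $p_j\circ\iota_j$ remains a nonzero map landing in $\text{rad}(P_{T_j})$.
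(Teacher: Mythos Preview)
Your argument is correct, and in fact the paper does not supply any proof of this theorem: it simply records the statement and attributes the general result to Gross--Hacking--Keel--Kontsevich. So there is no ``paper proof'' to compare against, only a citation.

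That said, your approach is worth contrasting with the spirit of the surrounding text. The paper frames sign coherence geometrically, noting that it is equivalent to the hyperplane $D(S_k)=\{x_k=0\}$ not slicing through the simplex spanned by the $g(T_i)$; its nearby machinery (Theorem~\ref{thm: rigidity is open} and its corollaries) shows that the simplices coming from silting pairs cannot overlap, but stops short of extracting sign coherence from that. Your proof bypasses the geometry entirely and works directly at the level of minimal presentations: the positive $k$th coordinate places a $P_k$-summand in the cover $P_{T_i}$, the negative coordinate places one in the syzygy $P'_{T_j}$, and the vanishing of $\mathrm{Ext}^1(T_j,T_i)$ forces the resulting generator of $T_i$ to lie simultaneously outside and inside $\mathrm{rad}(T_i)$. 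Every step is sound; in particular your use of ``module maps preserve radicals'' is just the identity $f(JM)=Jf(M)\subseteq JN$ for the Jacobson radical $J$ of a finite-dimensional algebra, and minimality of the presentation of $T_j$ is exactly what puts $p_j(P'_{T_j})$ inside $\mathrm{rad}(P_{T_j})$.

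What your route buys is a self-contained, elementary argument valid for any hereditary $\Bbbk Q$, avoiding both the heavy cluster-algebraic machinery of the cited reference and the deformation/openness language used elsewhere in the notes. The only small addition one might make is to observe that the same contradiction persists if one of $T_i,T_j$ is replaced by a shifted projective $P_\ell[1]$ (whose presentation is $P_\ell\to 0$), so the statement extends from tilting modules to silting pairs with essentially no extra work.
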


\begin{exmp}
Consider $Q: 1 \leftarrow 2 \leftarrow 3$ and $T = I_2 \oplus S_2 \oplus P_3$. Then by placing the corresponding $g$-vectors in the first, second, and third column respectively of a matrix, we get the following.
\begin{center}
$\begin{bmatrix}
-1 & -1 & 0 \\ 0 & 1 & 0 \\ 1 & 0 & 1
\end{bmatrix}$
\end{center}

Note in each row, the signs of all the entries are the same (or zero). The first row is all negative and the last two are all positive.

\end{exmp}

The sign coherence of $g$-vectors is equivalent to the statement that $D(S_k) = \{x\in\mathbb{R}^n : x_k = 0\}$ does not cut the simplex with vertices $g(T_i)$. That is, we can't have the following in the stability picture:

\begin{center}
\begin{tabular}{c}
\xymatrix{ & g(T_2) \ar@{-}[dd] \ar@{-}[dr] &  D(S_k) \\ & & g(T_3) \\ \ar@{-}^{x_k > 0}_{x_k < 0}[uurr]  & g(T_1) \ar@{-}[ur]  & }  
\end{tabular}
\end{center}

\subsection{$c$-vectors and Frozen Vertices}

\indent 

Recall the notion of quiver mutation from Definition \ref{def: quiver mutation}. We can provide an analogous definition of mutation in terms of the exchange matrix, which can be found in [\ref{ref: BDG matrix mutation}], as follows. 

\begin{defn}
Let $B$ be the exchange matrix for a quiver $Q$. Then for any $1 \leq k \leq n$,
the \textbf{mutation of $B$ in the direction $k$} is the matrix $\mu_k(B) = [b_{ij}]$ given by

\begin{center}

\[ b_{ij} = \begin{cases} 
          -b_{ij} & i = k \, \text{or} \, j = k \\
          b_{ij} + \text{max}(b_{ik},0)\text{max}(b_{kj},0) - \text{min}(b_{ik},0)\text{min}(b_{kj},0) & \text{otherwise}
       \end{cases}
\]

\end{center}

\end{defn}

Notice that $\mu_k(B)$ is the exchange matrix of $\mu_k(Q)$, hence is skew-symmetric. We can perform mutations of the exchange matrix in two steps as in the following example.

\begin{exmp}
Recall the exchange matrix from Example \ref{exmp: exchange matrix}. Then we can mutate at 2 using the following two step process. Since we are mutating at two, we highlight both the second row and column. 

\begin{center}

\tikzset{every picture/.style={line width=0.75pt}} 

\begin{tikzpicture}[x=0.75pt,y=0.75pt,yscale=-1,xscale=1]

\draw  [color={rgb, 255:red, 155; green, 155; blue, 155 }  ,draw opacity=1 ][fill={rgb, 255:red, 155; green, 155; blue, 155 }  ,fill opacity=1 ] (166.79,103.1) .. controls (159.48,103.22) and (153.23,82.5) .. (152.82,56.81) .. controls (152.41,31.13) and (157.99,10.22) .. (165.29,10.1) .. controls (172.59,9.98) and (178.85,30.71) .. (179.26,56.39) .. controls (179.67,82.07) and (174.09,102.99) .. (166.79,103.1) -- cycle ;
\draw  [color={rgb, 255:red, 155; green, 155; blue, 155 }  ,draw opacity=1 ][fill={rgb, 255:red, 155; green, 155; blue, 155 }  ,fill opacity=1 ] (225.22,56.38) .. controls (225.28,62.46) and (197.79,67.66) .. (163.82,68) .. controls (129.85,68.34) and (102.27,63.68) .. (102.21,57.61) .. controls (102.15,51.53) and (129.63,46.33) .. (163.6,45.99) .. controls (197.57,45.65) and (225.15,50.31) .. (225.22,56.38) -- cycle ;
\draw    (229,57) -- (387.22,57) ;
\draw [shift={(389.22,57)}, rotate = 180] [color={rgb, 255:red, 0; green, 0; blue, 0 }  ][line width=0.75]    (10.93,-3.29) .. controls (6.95,-1.4) and (3.31,-0.3) .. (0,0) .. controls (3.31,0.3) and (6.95,1.4) .. (10.93,3.29)   ;
\draw    (453,106) -- (453.22,203) ;
\draw [shift={(453.22,205)}, rotate = 269.87] [color={rgb, 255:red, 0; green, 0; blue, 0 }  ][line width=0.75]    (10.93,-3.29) .. controls (6.95,-1.4) and (3.31,-0.3) .. (0,0) .. controls (3.31,0.3) and (6.95,1.4) .. (10.93,3.29)   ;
\draw  [color={rgb, 255:red, 155; green, 155; blue, 155 }  ,draw opacity=1 ][fill={rgb, 255:red, 155; green, 155; blue, 155 }  ,fill opacity=1 ] (454.79,103.1) .. controls (447.48,103.22) and (441.23,82.5) .. (440.82,56.81) .. controls (440.41,31.13) and (445.99,10.22) .. (453.29,10.1) .. controls (460.59,9.98) and (466.85,30.71) .. (467.26,56.39) .. controls (467.67,82.07) and (462.09,102.99) .. (454.79,103.1) -- cycle ;
\draw  [color={rgb, 255:red, 155; green, 155; blue, 155 }  ,draw opacity=1 ][fill={rgb, 255:red, 155; green, 155; blue, 155 }  ,fill opacity=1 ] (513.22,56.38) .. controls (513.28,62.46) and (485.79,67.66) .. (451.82,68) .. controls (417.85,68.34) and (390.27,63.68) .. (390.21,57.61) .. controls (390.15,51.53) and (417.63,46.33) .. (451.6,45.99) .. controls (485.57,45.65) and (513.15,50.31) .. (513.22,56.38) -- cycle ;
\draw  [color={rgb, 255:red, 155; green, 155; blue, 155 }  ,draw opacity=1 ][fill={rgb, 255:red, 155; green, 155; blue, 155 }  ,fill opacity=1 ] (455.79,306.1) .. controls (448.48,306.22) and (442.23,285.5) .. (441.82,259.81) .. controls (441.41,234.13) and (446.99,213.22) .. (454.29,213.1) .. controls (461.59,212.98) and (467.85,233.71) .. (468.26,259.39) .. controls (468.67,285.07) and (463.09,305.99) .. (455.79,306.1) -- cycle ;
\draw  [color={rgb, 255:red, 155; green, 155; blue, 155 }  ,draw opacity=1 ][fill={rgb, 255:red, 155; green, 155; blue, 155 }  ,fill opacity=1 ] (510.22,259.38) .. controls (510.28,265.46) and (485.71,270.63) .. (455.34,270.93) .. controls (424.97,271.24) and (400.3,266.55) .. (400.24,260.48) .. controls (400.18,254.4) and (424.75,249.23) .. (455.12,248.93) .. controls (485.49,248.62) and (510.15,253.31) .. (510.22,259.38) -- cycle ;

\draw (113,27.4) node [anchor=north west][inner sep=0.75pt]    {$\begin{bmatrix}
0 & 1 & -2\\
-1 & 0 & 1\\
2 & -1 & 0
\end{bmatrix}$};
\draw (283,35) node [anchor=north west][inner sep=0.75pt]   [align=left] {Step 1};
\draw (228,61) node [anchor=north west][inner sep=0.75pt]   [align=left] {Compute the entries\\outside the ovals};
\draw (392,144) node [anchor=north west][inner sep=0.75pt]   [align=left] {Step 2};
\draw (461,140) node [anchor=north west][inner sep=0.75pt]   [align=left] {Negate the entries\\inside the ovals};
\draw (401,27.4) node [anchor=north west][inner sep=0.75pt]    {$\begin{bmatrix}
0 & 1 & -1\\
-1 & 0 & 1\\
1 & -1 & 0
\end{bmatrix}$};
\draw (411,229.4) node [anchor=north west][inner sep=0.75pt]    {$\begin{bmatrix}
0 & -1 & -1\\
1 & 0 & -1\\
1 & 1 & 0
\end{bmatrix}$};

\end{tikzpicture}

\end{center}

\end{exmp}

We will now extend the exchange matrix of $Q$ by adding the identity matrix to the bottom of $B$. This new extended exchange matrix of $B$ will be denoted $\tilde{B} = {B\over C}$ where $C$ is the identity matrix. We can provide a purely combinatorial description of how mutation for this new extended exchange matrix works; however, it may be useful to introduce the notion of frozen vertices which can also be found in [\ref{ref: BDG matrix mutation}].

\begin{defn}
To any quiver $Q = (\{1,2,\dots, n\}, \{\alpha_1, \alpha _2, \dots \alpha_k\})$, we denote by $\check{Q}$ the quiver with $\check{Q}_0 = Q_0 \cup \{1', 2', \dots , n'\}$ and $\check{Q}_1 = Q_1 \cup \{\alpha'_i: i' \rightarrow i : i \in \{1, 2, \dots, n\}\}$. The vertices not in $Q_0$ are called \textbf{frozen vertices} since we can't mutate $\check{Q}$ at these vertices. 
\end{defn}

\begin{rem}
Sometimes the notation $\hat{Q}$ is used. The quiver $\hat{Q} = (\check{Q}_0 , Q_1 \cup \{\alpha'_i: i' \leftarrow i : i \in \{1, 2, \dots, n\}\})$.
\end{rem}

We can use this extended quiver to define mutation of the extended exchange matrix. In particular the mutated extended exchange matrix is just the exchange matrix of the mutated extended quiver.

\begin{exmp}
Let $Q = 1\rightarrow 2$. Then the extended quiver along with a mutation at vertex 1 is depicted below. Note that the frozen vertices are blue. 

\begin{center}
\begin{tabular}{c}
\xymatrix{{\color{blue} 1'}\ar[d] & {\color{blue} 2'}\ar[d] \ar@<-20pt>^{\mu_1}[r] & {\color{blue} 1'}\ar[dr] & {\color{blue} 2'}\ar[d] \\ 1 \ar[r] & 2 & 1 \ar[u] & 2\ar[l]}
\end{tabular}
\end{center}

The extended exchange matrix along with its mutation at one is depicted below. For $i,j \leq |Q_0|$, the $ij$th entry denotes the number of arrows from $i$ to $j$ minus the number of arrows from $j$ to $i$. For $i > |Q_0|$, the $ij$th entry denotes the number of arrows from $(i-|Q_0|)'$ to $j$ minus the number of arrows from $j$ to $(i-|Q_0|)'$. For example in the mutated matrix, entry 31 is -1 since in the mutated extended quiver there are no arrows from $1'$ to 1 and there is one arrow from 1 to $1'$.

\begin{center}
\begin{tabular}{c c c}
$\begin{bmatrix} 0 & 1 \\ -1 & 0 \\ \hline 1 & 0 \\ 0 & 1 \end{bmatrix}$ & 
$\overset{\mu_1}{\rightarrow}$ & 
$\begin{bmatrix} 0 & -1 \\ 1 & 0 \\ \hline -1 & 1 \\ 0 & 1 \end{bmatrix}$
\end{tabular}
\end{center}

\end{exmp}

\begin{defn}
Let $\tilde{B} = {B\over C}$ be the extended exchange matrix associated to the cluster algebra with initial seed $(Q,x_{*})$. Then the set of \textbf{$c$-vectors} is the set consisting of the columns of $C$ along with the columns of any $C'$ that arises from a mutation of $\tilde{B}$. The matrix $C$ along with any mutation of it is called a \textbf{C-matrix}.
\end{defn}

\subsection{Maximal Green Sequences}

\indent

We are now ready to define maximal green sequences. As a remark, these sequences are not named after a person, but the color green. Keller is credited with coming up with the green/red traffic light coloring system. 

\begin{defn}
A \textbf{maximal green sequence} (MGS) is a sequence of green mutations starting with $\tilde{B} = {B \over I}$ and ending with all negative $c$-vectors. A mutation $\mu_k$ is said to be \textbf{green} if the $c$-vector $c_k$ is positive. If $c_k$ is negative, $\mu_k$ is said to be a \textbf{red} mutation.
\end{defn}

\begin{rem}
The definition of a MGS is dependent on the fact that $c$-vectors can't be both red and green at the same time; that is, they are sign-coherent. This was proven first by Derksen, Weymen, and Zelevinski for quivers with potential in [\ref{ref: DWZ sign coherence c-vectors}] then later by Gross, Hacking, Keel, and Kontsevich for general cluster algebras in [\ref{ref: GHKK sign coherence c-vectors}]. In [\ref{ref: Fu sign coherence c-vectors}], Fu defined the $c$-vectors in terms of $g$-vectors then proved the statement for all finite dimensional algebras. Fu used a fact that we will see later; that is, the sign coherence of $c$-vectors follows from the fact that they are the dimension vectors of certain indecomposable modules. In [\ref{ref: T which bricks correspond to $c$-vectors}], Treffinger classifies precisely the modules whose dimension vectors give positive $c$-vectors.
\end{rem}

\begin{conj}
For $Q$ a quiver with no oriented cycles, there are only finitely many MGS.
\end{conj}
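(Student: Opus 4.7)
The plan is to recast an MGS in representation-theoretic terms via the machinery developed in the paper and bound both the number of green mutations available at each step and the total length of any MGS. First, using the bijection between silting pairs and clusters from Section~\ref{sec: clusters with modules}, I would interpret each seed visited by an MGS as a silting pair $(T, P)$, and using Fu's identification of positive $c$-vectors with dimension vectors of bricks (cited in the remark preceding the conjecture), I would translate each green mutation into the choice of a brick $B \in \text{mod-}\Bbbk Q$. An MGS of length $\ell$ then yields an ordered tuple of bricks $(B_1, \ldots, B_\ell)$, and since every silting pair has exactly $n$ summands, at most $n$ green mutations are available at each step.

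Second, I would establish orthogonality constraints among the bricks arising in an MGS. Using the Happel-Ringel Lemma together with Lemma~\ref{lem: Schofield}, the collection of bricks labeling the $n$ green $c$-vectors at any fixed silting pair forms an ext-orthogonal family, hence (after reordering) an exceptional sequence. This provides the local rigidity needed to constrain how the MGS can evolve: at each silting pair the next brick must lie in a finite and structurally determined set of candidates.

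Third, to bound the length $\ell$, I would translate the sequence $(B_1, \ldots, B_\ell)$ into a chain of functorially finite torsion classes in $\text{mod-}\Bbbk Q$, where the $i$-th torsion class is generated by $B_1, \ldots, B_i$. The greenness condition at each mutation forces this chain to be strictly increasing, and for $Q$ of Dynkin type the length of any such chain is bounded by the number of positive roots. Combining a bound on $\ell$ with the at-most-$n$-choices-per-step property would yield finiteness of the number of MGSs by a straightforward tree-pruning argument.

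The main obstacle will be the bound on $\ell$ for wild acyclic $Q$: the lattice of torsion classes in that case is infinite and generally contains chains of unbounded length, so the claim that \emph{MGS-induced} chains are uniformly bounded requires genuinely new input. The decisive ingredient would be that such chains consist of functorially finite torsion classes only, controlled by the wall-and-chamber structure of Section~\ref{sec: stability pictures} and the no-gap property of MGSs due to Br\"ustle-Dupont-P\'erotin; supplying a rigorous length bound in the wild case is the key missing piece of this proposal, and indeed is the reason this remains a conjecture rather than a theorem.
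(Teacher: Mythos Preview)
The paper does not prove this statement: it is stated as a \emph{conjecture}, and the sentence immediately following it notes that finiteness is known only for tame quivers and for wild quivers with at most three vertices (citing Br\"ustle--Dupont--P\'erotin), and remains open in general. There is therefore no proof in the paper to compare your proposal against.

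Your outline is a reasonable heuristic for why one might expect finiteness, and you correctly isolate the crux: a uniform bound on the length $\ell$ of an MGS in the wild case is exactly what is missing, and your final paragraph says as much. One technical point deserves correction. In your second step you invoke Lemma~\ref{lem: Schofield} to conclude that the bricks labeling the $c$-vectors at a fixed silting pair are ext-orthogonal and hence form an exceptional sequence; but Lemma~\ref{lem: Schofield} applies to the \emph{summands} $T_i$ of a rigid module, not to the bricks $M_j$ whose dimension vectors are the $c$-vectors. Those bricks are pairwise Hom-orthogonal (they behave like the simples of an associated abelian category), but extensions among them generally do not vanish, so the exceptional-sequence claim is unjustified as stated. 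This does not sink the overall strategy --- the length bound is the real obstruction --- but the ``local rigidity'' you claim in step two is weaker than asserted.
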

This is known to be true for tame, affine, and wild quivers with at most three vertices and was proven by Br\"{u}stle, Dupont, and Perotin in [\ref{ref: BDG matrix mutation}], but unknown in general.

\begin{exmp}
Let $Q$ be the quiver $1\rightarrow 2$. Then we have a maximal green sequence given by 

\begin{center}
\begin{tabular}{c c c c c c c}
$\begin{bmatrix} 0 & 1 \\ -1 & 0 \\ \hline {\color{green} 1} & {\color{green} 0} \\ {\color{green} 0} & {\color{green} 1} \end{bmatrix}$ & 
$\overset{\mu_1}{\rightarrow}$ & 
$\begin{bmatrix} 0 & -1 \\ 1 & 0 \\ \hline {\color{red}-1} & {\color{green} 1 } \\ {\color{red} 0 } & {\color{green}1} \end{bmatrix}$ & 
$\overset{\mu_2}{\rightarrow}$ &
$\begin{bmatrix} 0 & 1 \\ -1 & 0 \\ \hline {\color{green} 0} & {\color{red} -1 } \\ {\color{green} 1 } & {\color{red} -1} \end{bmatrix}$ &
$\overset{\mu_1}{\rightarrow}$ &
$\begin{bmatrix} 0 & -1 \\ 1 & 0 \\ \hline {\color{red} 0} & {\color{red} -1 } \\ {\color{red} -1 } & {\color{red} 0} \end{bmatrix}$ 
\end{tabular}
\end{center}

Notice that the $c$-vectors are the dimension vectors of $S_1, P_1$, and $S_2$ or their negatives. We can also visualize this same maximal green sequence using frozen vertices as follows.

\begin{center}
\begin{tabular}{c}
\xymatrix{{\color{blue} 1'}\ar@[green][d] & {\color{blue} 2'}\ar@[green][d] \ar@<-20pt>^{\mu_1}[r] & {\color{blue} 1'}\ar@[green][dr] & {\color{blue} 2'}\ar@[green][d] \ar@<-20pt>^{\mu_2}[r]  &  {\color{blue} 1'} &  {\color{blue} 2' } \ar@[green][dl] \ar@<-20pt>^{\mu_1}[r] & {\color{blue} 1'}   & {\color{blue} 2' }  \\ 1 \ar[r] & 2 & 1 \ar@[red][u] & 2\ar[l] & 1 \ar[r] & 2 \ar@[red][u] \ar@[red][ul]  & 1 \ar@[red][ur] & 2 \ar@[red][ul] \ar[l]}
\end{tabular}
\end{center}

A third way to visualize a MGS is through so-called green paths in the stability picture, which were introduced by Igusa and Todorov in [\ref{ref: IT green paths}] and further explored by Igusa in [\ref{ref: I green paths}]. The walls indicate which $c$-vectors to mutate. For instance in the lower path, which corresponds to the MGS shown earlier in this example, we mutate the $c$-vector that gives the dimension vectors of $S_1, P_2,$ and $S_2$ in that order. This is precisely $\mu_1 \circ 
\mu_2 \circ \mu_1$ as seen above. Moreover, the below picture shows that there are only two paths from the all green region to the all red region. Therefore, there are only two maximal green sequences in this example; the lower path corresponds to the MGS shown earlier in this example and the MGS corresponding to the upper path is shown below the stability picture.

\begin{center}
\tikzset{every picture/.style={line width=0.75pt}} 

\begin{tikzpicture}[x=0.75pt,y=0.75pt,yscale=-1,xscale=1]

\draw    (330.22,60.01) -- (330.22,260.01) ;
\draw    (230.22,160.01) -- (430.22,159.01) ;
\draw    (330.22,159.51) -- (409.22,238.51) ;
\draw [color={rgb, 255:red, 126; green, 211; blue, 33 }  ,draw opacity=1 ]   (265.22,211.01) .. controls (267.21,149.32) and (310.78,85.65) .. (382.14,85.01) ;
\draw [shift={(383.22,85.01)}, rotate = 180] [color={rgb, 255:red, 126; green, 211; blue, 33 }  ,draw opacity=1 ][line width=0.75]    (10.93,-3.29) .. controls (6.95,-1.4) and (3.31,-0.3) .. (0,0) .. controls (3.31,0.3) and (6.95,1.4) .. (10.93,3.29)   ;
\draw [color={rgb, 255:red, 126; green, 211; blue, 33 }  ,draw opacity=1 ]   (270.22,216.01) .. controls (321.96,218.99) and (387.56,182.37) .. (400.04,99.26) ;
\draw [shift={(400.22,98.01)}, rotate = 98.13] [color={rgb, 255:red, 126; green, 211; blue, 33 }  ,draw opacity=1 ][line width=0.75]    (10.93,-3.29) .. controls (6.95,-1.4) and (3.31,-0.3) .. (0,0) .. controls (3.31,0.3) and (6.95,1.4) .. (10.93,3.29)   ;

\draw (312,265.4) node [anchor=north west][inner sep=0.75pt]    {$D( S_{1})$};
\draw (438,150.4) node [anchor=north west][inner sep=0.75pt]    {$D( S_{2})$};
\draw (411.22,241.91) node [anchor=north west][inner sep=0.75pt]    {$D( P_{2})$};
\draw (225,221) node [anchor=north west][inner sep=0.75pt]   [align=left] {\textcolor[rgb]{0.49,0.83,0.13}{All Green}};
\draw (390,76) node [anchor=north west][inner sep=0.75pt]   [align=left] {\textcolor[rgb]{0.82,0.01,0.11}{All Red}};
\end{tikzpicture}

\end{center}

\begin{center}
\begin{tabular}{c c c c c}
$\begin{bmatrix} 0 & 1 \\ -1 & 0 \\ \hline {\color{green} 1} & {\color{green} 0} \\ {\color{green} 0} & {\color{green} 1} \end{bmatrix}$ & 
$\overset{\mu_2}{\rightarrow}$ & 
$\begin{bmatrix} 0 & -1 \\ 1 & 0 \\ \hline {\color{green} 1} & {\color{red} 0 } \\ {\color{green} 0 } & {\color{red} -1} \end{bmatrix}$ & 
$\overset{\mu_1}{\rightarrow}$ &
$\begin{bmatrix} 0 & 1 \\ -1 & 0 \\ \hline {\color{red} -1} & {\color{red} 0 } \\ {\color{red} 0 } & {\color{red} -1} \end{bmatrix}$ 
\end{tabular}
\end{center}

\end{exmp}

\subsection{Relationship Between $c$-vectors and $g$-vectors}

\indent

As we have seen in the previous examples, the $c$-vectors are dimension vectors of certain indecomposable modules. In particular, this means they correspond to the walls in the stability picture and the $g$-vectors are vectors that lie in these walls. The following theorem formalizes these two ideas. The first part of the following theorem was done by Nakanishi and Zelevinski in [\ref{ref: NZ formula}] where the latter part was done by Fu in [\ref{ref: Fu sign coherence c-vectors}].

\begin{thm}
Let $T = \oplus T_i$ be a silting module. Then the following hold.
\begin{enumerate}
\item The $g$-vectors $g(T_i)$ and $c$-vectors $c_j$ are related by the Nakanishi-Zelevinsky formula: 
$$g(T_i) \cdot c_j = -\delta_{ij}.$$
\item $c_{j} = \pm \text{dim}M_{j}$ where $g(T_i) \in D(M_j)$ for all $i\neq j$.
\end{enumerate}
\end{thm}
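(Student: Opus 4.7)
My plan is to prove part (1) by induction on the length of the mutation sequence that produces $T$, and to deduce part (2) from part (1) together with the wall-and-chamber dictionary already established in Section \ref{sec: stability pictures}.

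For the base case of part (1), the initial seed corresponds to the silting object $P_1[1] \oplus \cdots \oplus P_n[1]$, so $g(T_i) = -e_i$; the initial $C$-matrix is $I$, so $c_j = e_j$, giving $g(T_i) \cdot c_j = -\delta_{ij}$. For the inductive step, I would collect the $g$-vectors into the columns of a matrix $G$ and the $c$-vectors into the columns of a matrix $C$, so that the Nakanishi--Zelevinsky identity reads $G^T C = -I$. The mutation rules give explicit piecewise-linear formulas for $G'$ and $C'$ in terms of $G$, $C$, and the exchange matrix $B$, and a direct matrix computation --- case-splitting according to the signs of the relevant entries --- shows $(G')^T C' = -I$. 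I expect this piece to be the technical heart of the proof: the mutation formulas contain $\max$ and $\min$, and sign coherence of $g$- and $c$-vectors (Section \ref{sec: maximal green sequences}) is exactly what is needed to resolve these cases uniformly.

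For part (2), fix $j$ and set $H_j$ to be the linear span of $\{g(T_i) : i \neq j\}$. By the rigidity corollary in Section \ref{sec: rigidity}, the $g(T_i)$ are linearly independent, so $\dim H_j = n-1$. By hypothesis, every $g(T_i)$ with $i \neq j$ lies in $D(M_j)$, and $D(M_j)$ is contained in the ambient hyperplane $L_j = \{\theta \in \mathbb{R}^n : \theta \cdot \dim M_j = 0\}$. Since $L_j$ has dimension $n-1$ and contains the $n-1$ linearly independent vectors $g(T_i)$, it must equal their span, so $H_j = L_j$. Part (1) now says $c_j \perp g(T_i)$ for every $i \neq j$, i.e.\ $c_j \in L_j^\perp = \mathbb{R} \cdot \dim M_j$, so $c_j = \lambda \dim M_j$ for some scalar $\lambda$. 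The extra relation $g(T_j) \cdot c_j = -1$ from part (1), together with integrality of the $c$-vectors, then pins $\lambda$ down to $\pm 1$ and finishes the proof.

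The expected main obstacle is the explicit verification of $(G')^T C' = -I$ under the combined $g$- and $c$-vector mutation rules; once this identity is in hand, part (2) reduces to the short linear-algebra argument just sketched.
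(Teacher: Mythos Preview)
Your approach to part (1) goes further than the paper's: the paper simply \emph{assumes} the Nakanishi--Zelevinsky identity, citing [\ref{ref: NZ formula}], and only sketches part (2). Your proposed induction on the length of the mutation sequence, verifying that $G^T C = -I$ is preserved under mutation and using sign coherence to resolve the $\max/\min$ case splits, is indeed the standard route and is essentially what Nakanishi and Zelevinsky carry out.

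For part (2) your linear-algebra reduction ends at the same point as the paper --- $c_j$ lies in the one-dimensional orthogonal complement of $\{g(T_i):i\neq j\}$ --- but the two arguments differ in how $M_j$ enters. You treat the existence of an $M_j$ with $g(T_i)\in D(M_j)$ for $i\neq j$ as a hypothesis supplied by the statement; the paper instead \emph{constructs} $M_j$. By Lemma~\ref{lem: Schofield} the collection $(T_2,\dots,T_n)$ can be ordered into an exceptional sequence, and $M_1$ is taken to be the unique module completing it on the left to an exceptional sequence $(M_1,T_2,\dots,T_n)$. This immediately gives $\mathrm{Hom}(T_i,M_1)=0=\mathrm{Ext}(T_i,M_1)$ for $i\geq 2$, hence $g(T_i)\in D(M_1)$, and then the orthogonality argument applies. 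The paper's route buys you an explicit identification of $M_j$ rather than just an abstract uniqueness statement.

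One small gap in your sketch: integrality of $c_j$ together with $g(T_j)\cdot c_j=-1$ does not by itself force $\lambda=\pm 1$. From $c_j=\lambda\,\dim M_j$ you only get $\lambda = -1/(g(T_j)\cdot\dim M_j)$, and nothing yet excludes $g(T_j)\cdot\dim M_j=\pm 2$, say. You need additionally that $\dim M_j$ is a primitive integer vector (or that $c_j$ is), which is where input like the exceptional-sequence description of $M_j$ or Treffinger's characterisation [\ref{ref: T which bricks correspond to $c$-vectors}] of $c$-vectors as dimension vectors of bricks becomes relevant. The paper's sketch is also not fully detailed on this last point.
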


\begin{proof}[Sketch of Part of Proof]
We will take $j = 1$ and assume the Nakanishi-Zelevinsky formula. By Lemma \ref{lem: Schofield}, $(T_2, \dots , T_n)$ forms an exceptional sequence. By properties of exceptional sequences, there exists a unique module $M_1$ such that $(M_1, T_2, \dots , T_n)$ is exceptional. We conclude that Hom$(T_j,M_1) = 0 = \text{Ext}(T_j,M_1)$ for all $j$. Therefore, $g(T_j) \in D(M_1) \subset (\text{dim}M_1)^{\perp}$. This implies that $g(T_j)$ is orthogonal to the dimension vector of $M_1$. By the Nakanishi-Zelevinsky formula, $c_1 = \pm \text{dim}M_1$.
\end{proof}

It is not the common notation to negate the Kronecker delta in the Nakanishi-Zelevinsky formula. We will finish this section with a brief explanation of why the negation of the Kronecker delta makes sense from a representation-theoretic perspective. \\

The initial cluster in a cluster algebra is $(x_1, x_2, \dots , x_n)$ and this corresponds to the silting pair $(0, P_1[1] \oplus P_2[1] \oplus \dots \oplus P_n[1])$ where $P_i[1]$ has $g$-vector $-e_i$. By definition of a MGS, the initial $c$-vectors are $c_i = e_i$. Therefore, the initial $g$-vectors dotted with the initial $c$-vectors satisfy the formula $g(T_i) \cdot c_j = -\delta_{ij}$. This relation is preserved under mutations since clusters are defined to be those rational functions attained from the initial cluster by a finite sequence of mutations, hence the negation of the Kronecker delta.


\eject
\section{References}

\begin{enumerate}[ {[}1{]} ]

\item Adachi, T., Iyama, O., Reiten, I. \textit{$\tau$-tilting theory} Compos. Math., 150 (3) (2014), pp. 415-452 \label{ref: AIR tau-tilting} 

\item Amiot, C. \textit{Cluster categories for algebras of global dimension 2 and quivers with potential}. Ann. Inst. Fourier, 59(6):2525–2590, 2009. \label{ref: A non hereditary cluster cat}

\item Assem, I., Simson, D., Skowronski, A., \textit{Elements of the Representation Theory of Associative Algebras: Volume 1: Techniques of Representation Theory.}. Cambridge University Press. 2006. \label{ref: blue book}

\item Bridgeland, T. \textit{Scattering diagrams, Hall algebras and stability conditions} Algebr. Geom., 4 (5) (2017), pp. 523-561 \label{ref: B scattering diagrams}

\item Broomhead, N., Pauksztello, D., Ploog, D. \textit{Discrete derived categories II: the silting pairs CW complex and
the stability manifold}, J. Lond. Math. Soc. (2) 93 (2016), 273-300. \label{ref: BPP silting pairs}

\item Br\"{u}stle, T., Dupont, G., Perotin, M. \textit{On Maximal Green Sequences}, May 2012, International Mathematics Research Notices, 2014 (16), DOI:10.1093/imrn/rnt075 \label{ref: BDG matrix mutation}

\item Br\"{u}stle, T., Smith, D., Treffinger, H. \textit{Wall and chamber structure for finite-dimensional algebras} Advances in Mathematics Volume 354, 1 October 2019, 106746 \label{ref: BT wall and chamber structure} 

\item Buan, A. B, Marsh, R., Reineke, M., Reiten, I., Todorov, G. \textit{Tilting theory and cluster combinatorics} Adv. Math., 204 (2) (2006), pp. 572-618 \label{ref: BMRRT categorification of cluster algebras} 

\item  Caldero P., Chapoton, F. \textit{Cluster algebras as Hall algebras of quiver representations}, Comment. Math. Helv. 81 (2006), no. 3, 595-616. \label{ref: CC equation}

\item Cerulli Irelli, Giovanni. \textit{Three Lectures on Quiver Grassmannians}. Preprint, arXiv:2003.08265 [math.AG], 2020. \label{ref: CI quiver grassmannians}

\item Chapoton, F., Fomin, S., Zelevinsky, A. \textit{Polytopal realizations of generalized associahedra}. Canad. Math. Bull. 45, no. 4, 537–566, 2002. \label{ref: CFZ original compatibility graph}

\item Crawley-Boevey, W. \textit{Lectures on Representations of Quivers}, Bielefeld University, \url{https://www.math.uni-bielefeld.de/~wcrawley/quivlecs.pdf} \label{ref: Bill's notes}

\item Dehy, R., Keller, B. \textit{On the combinatorics of rigid objects in 2-Calabi-Yau categories} Int. Math. Res. Not. IMRN (11) (2008), Article rnn029 \label{ref: DK rep theory g-vectors}

\item Demonet, L., Iyama, O., Jasso, G. \textit{$\tau$-tilting finite algebras, bricks, and $g$-vectors} Int. Math. Res. Not., 2019 (3) (07 2017), pp. 852-892 \label{ref: DIJ triangulation of sphere}

\item Derksen, H., Weyman, J., Zelevinsky, A. \textit{Quivers with potentials and their representations II: applications to cluster algebras} J. Am. Math. Soc., 23 (3) (2010), pp. 749-790 \label{ref: DWZ sign coherence c-vectors}


\item Fomin, S., Zelevinsky, A. \textit{Cluster algebras I: Foundations}, J. Amer. Math. Soc. 15 (2002), 497–529. \label{ref: FZ cluster algbras}

\item Fomin, S., Zelevinsky, A. \textit{Cluster algebras. II: Finite type classification} Invent. Math., 154(1):63–121, 2003. \label{ref: FZ finite type classification}

\item Fomin, S., Zelevinsky, A. \textit{Cluster algebras. IV. Coefficients}. Compos. Math., 143(1):112–164, 2007. \label{ref: FZ g-vectors}

\item Fu, C. \textit{$c$-Vectors via $\tau$-tilting theory} J. Algebra, 473 (2017), pp. 194-220 \label{ref: Fu sign coherence c-vectors}

\item Gabriel, P. \textit{Unzerlegbare Darstellungen. I} Manuscripta Math., 6:71–103; correction, ibid. 6 (1972), 309, 1972. \label{ref: Gabriel's theorem 1}

\item Gabriel, P. \textit{Indecomposable representations. II} Sympos. math. 11, Algebra commut., Geometria, Convegni 1971/1972, 81-104 (1973). \label{ref: Gabriel's theorem 2}

\item Gross, M., Hacking, P., Keel, S., Kontsevich, M. \textit{Canonical bases for cluster algebras} J. Am. Math. Soc., 31 (2) (11 2017), pp. 497-608 \label{ref: GHKK sign coherence c-vectors}

\item Happel, D., Ringel, C. M. \textit{Almost complete tilting modules}. Trans. Amer. Math. Soc., Vol. 274, No. 2 (Dec., 1982), pp.
399-443. \label{ref: HR happel-ringel lemma}

\item Happel, D., Unger, L. \textit{Almost complete tilting modules}. Proc. Amer. Math. Soc., 107(3):603–610, 1989. \label{ref: HU two completions}

\item Hoshino, M., Kato, Y., Miyachi, J. \textit{On $t$-structures and torsion theories induced by compact objects}, J. Pure Appl. Algebra 167 (2002) 15–35 \label{ref: HKM Two term silting complexes}

\item H\"{u}gel, L. A. \textit{Silting Objects}, Bull. of the Lon. Math. Soc. Volume 51, Issue 4 August 2019 Pages 658-690 \label{ref: H silting objects}

\item Igusa, K. \textit{Linearity of Stability Conditions} Communications in Algebra, 48(4), June 2017 DOI:10.1080/00927872.2019.1705466 \label{ref: I green paths}


\item Igusa, K., Orr, K., Todorov, G., Weyman, J. \textit{Cluster complexes via semi-invariants} Compos. Math., 145 (4) (2009), pp. 1001-1034 \label{ref: IOTW c-vectors are walls}

\item Igusa, K., Orr, K., Todorov, G., Weyman, J. \textit{Modulated semi-invariants}  Preprint, arXiv:1507.03051 [math.RT], 2015 \label{ref: IOTW rigidity is open} 

\item Igusa, K., Schiffler, R., \textit{Exceptional Sequences and Clusters}. Jounal of Algebra 323 (2010) 2183-2202 \label{ref: IS Clusters form exceptional collections} 

\item Igusa, K., Todorov, G. \textit{Picture groups and maximal green sequences} Electronic Research Archive 2021, Volume 29, Issue 5: 3031-3068. doi: 10.3934/era.2021025 \label{ref: IT green paths}

\item  Ingalls, C., Thomas, H. \textit{Noncrossing partitions and representations of quivers} Compos. Math., 145(6):1533–1562, 2009 \label{ref: IT support tilting}

\item  Keller, B., Vossieck, D. \textit{Aisles in derived categories}, Bull. Soc. Math. Belg. Ser. A 40 (1988), no. 2, 239–25 \label{ref: KV intro silting}

\item King, A. D. \textit{Moduli of representations of finite dimensional algebras} QJ Math., 45 (4) (1994), pp. 515-530 \label{ref: K stability conditions}

\item Lee K., Schiffler, R. \textit{Positivity for cluster algebras}. Annals of Mathematics, SECOND SERIES, Vol. 182, No. 1 (July, 2015), pp. 73-125 (53 pages)\label{ref: LS positivity}

\item Marsh, R., Reineke, M., Zelevinsky, A. \textit{Generalized associahedra via quiver representations} Trans. Amer. Math. Soc. 355 no.10, 4171-4186, 2003 \label{ref: MRZ compatibility graph}

\item Nakanishi, T., Zelevinsky, A. \textit{On tropical dualities in cluster algebras}, Contemp. Math. 565(2012), 217-226. \label{ref: NZ formula}

\item Schiffler, R. \textit{Quiver Representations}, CMS Books in Mathematics, Springer International Publishing (2014) \label{ref: Schiffler Quiver Reps} 

\item Schofield, A. \textit{General Representations of Quivers} Proc. of The Lon. Math. Soc, Vol. 65, (1992), pp. 46-64 \label{ref: S general reps}

\item Skowroński, A. \textit{Regular Auslander-Reiten components containing directing modules}. Proc. Amer. Math. Soc., 120(1):19–26, 1994. \label{ref: S tilting modules}

\item Thomas, H. \textit{An Introduction to the Lattice of Torsion Classes} Bull. of the Iran. Math. Soc., 47, pp 35-55 (2021) \label{ref: Th survey on lattice of torsion classes}

\item Treffinger, H., \textit{On sign-coherence of $c$-vectors} Journal of Pure and Applied Algebra, Volume 223, Issue 6, June 2019, Pages 2382-2400 \label{ref: T which bricks correspond to $c$-vectors}

\item Treffinger, H. \textit{$\tau$-tilting theory -- An introduction} Preprint, arXiv:2106.00426 [math.RT], 2022 \label{ref: T survey} 

\item Vejdemo-Johansson, M. \textit{Sketches of a platypus: persistent homology and its algebraic foundations} Preprint, arXiv:212.5398 [math.AT], 2013 \label{ref: V-J survey on persistent homology}


\end{enumerate}

\end{document}